\newcommand{\red}[1]{\textcolor[rgb]{1.00,0.00,0.00}{#1}}
\newtheorem{theorem}{Theorem}
\newtheorem{lemma}{Lemma}
\newtheorem{assumption}{Assumption}
\newtheorem{remark}{Remark}
\begin{document}

\title{Distributed Momentum-based Frank-Wolfe Algorithm for Stochastic Optimization}

\author{
	Jie Hou, Xianlin Zeng,~\IEEEmembership{Member, IEEE}, Gang Wang,~\IEEEmembership{Member, IEEE}, \\Jian Sun,~\IEEEmembership{Senior Member, IEEE}, and Jie Chen,~\IEEEmembership{Fellow, IEEE}
        % <-this % stops a space
\thanks{This work was supported in part by the National Key R$\&$D Program of China under Grant 2021YFB1714800,  the National Natural Science Foundation of China under Grants  62073035, 62173034, 61925303, 62088101, 61873033, the CAAI-Huawei MindSpore Open Fund, and the Chongqing Natural Science Foundation under Grant 2021ZX4100027. \textit{(Corresponding author: Xianlin Zeng.)}

J. Hou and X. Zeng are with the Key Laboratory of Intelligent Control and Decision of Complex Systems, School of Automation, Beijing Institute of Technology, Beijing, 100081, China (E-mail: houjie@bit.edu.cn; xianlin.zeng@bit.edu.cn).

G. Wang and J. Sun are with the Key Laboratory of Intelligent Control and Decision of Complex Systems, School of Automation, Beijing Institute of Technology, Beijing, 100081, China and Beijing Institute of Technology Chongqing Innovation Center, Chongqing, 401120, China (E-mail: gangwang@bit.edu.cn; sunjian@bit.edu.cn).

J. Chen is with the School of Electronic and Information Engineering, Tongji University, Shanghai, 200082, China and also with the Key Laboratory of Intelligent Control and Decision of Complex Systems, School of Automation, Beijing Institute of Technology, Beijing, 100081, China (E-mail: chenjie@bit.edu.cn).}% <-this % stops a space
%\thanks{Manuscript received April 19, 2021; revised August 16, 2021.}
}

% The paper headers
\markboth{Accepted to IEEE/CAA Journal of Automatica Sinica on \today}%
{Distributed Momentum-based Frank-Wolfe Algorithm for Stochastic Optimization}

%\IEEEpubid{0000--0000/00\$00.00~\copyright~2021 IEEE}
% Remember, if you use this you must call \IEEEpubidadjcol in the second
% column for its text to clear the IEEEpubid mark.

\maketitle

\begin{abstract}
This paper considers distributed stochastic optimization, in which a number of agents cooperate to optimize a global objective function through local computations and information exchanges with neighbors over a network.  Stochastic optimization problems are usually tackled by variants of projected stochastic gradient descent. However, projecting a point onto a feasible set is often expensive. The Frank-Wolfe (FW) method has well-documented merits in handling convex constraints, but existing stochastic FW algorithms are basically developed for centralized settings. In this context, the present work puts forth a distributed stochastic Frank-Wolfe solver, by judiciously combining Nesterov's momentum and gradient tracking techniques for stochastic convex and nonconvex optimization over networks. It is shown that the convergence rate of the proposed algorithm is $\mathcal{O}(k^{-\frac{1}{2}})$ for convex optimization, and $\mathcal{O}(1/\mathrm{log}_2(k))$ for nonconvex optimization. The efficacy of the algorithm is demonstrated by numerical simulations against a number of competing alternatives.
\end{abstract}

\begin{IEEEkeywords}
Distributed Optimization, Frank-Wolfe Algorithms, Stochastic Optimization, Momentum-based Method.
\end{IEEEkeywords}

\section{Introduction}
\IEEEPARstart{D}{istributed} stochastic optimization is a basic problem that arises widely in  diverse engineering applications, including unmanned systems \cite{eng2022,Jiang_SC_2022,YangOver2022},   
distributed machine learning \cite{Springer2010LBottou}, and multi-agent reinforcement learning \cite{sun2020marvel,wang2020decentralized,WangDDistributed2022}, to name a few. The goal is to minimize a shared objective function, which is defined as the expectation of a set of stochastic functions subject to general convex constraints, by means of local computations and information exchanges between working agents.

This paper considers a set $\mathcal{N}=\{1,2,\cdots,n\}$ of working agents  connected through a communication network $\mathcal{G}=(\mathcal{N},\mathcal{E})$, where $\mathcal{E}\subseteq \mathcal{N}\times\mathcal{N}$ denotes the set of edges. Each agent $i\in\mathcal{N}$ has a local  objective function $F_i(x)=\mathbb{E}[f_i(x,\xi^i)]$, where  $f_i$ is a stochastic function involving strategy variable $x\in\mathbb{R}^p$ and random variable $\xi^i$ that follows an unknown distribution. The collective goal of all the agents is to find $x^\ast$ that minimizes the average of all objective functions, i.e.,
\begin{equation}\label{ss1}
	\mathop{\mathrm{min}}\limits_{x\in\mathcal{X}}~F(x):=\frac{1}{n}\sum_{i=1}^nF_i(x)
\end{equation}
where $\mathcal{X}\subset\mathbb{R}^p$ is a convex and compact feasible set. Problems of the form \eqref{ss1} lie at the heart of machine learning and adaptive filtering, emerging in e.g., clustering, classification, energy management, and resource allocation \cite{jmlr2020AMokhtari,acc2021ZAkhtar,Allerton2016SJReddi,GongUAV2022}.

%CAA2021Luo,
 A popular approach to solving problem \eqref{ss1} is the projected stochastic gradient descent (pSGD) \cite{tsp2020SPU,tsp2021Zwang,wang2019learning}. In pSGD and its variants, the iteration variable is projected back onto $\mathcal{X}$ after taking a step in the direction of negative stochastic gradient \cite{DaiSyn2020,sjo2009ANemirovski,twc2015Yzhang,tie2019Xwang,sjo2013SGhadimi,JAS-2020-1315}. Such algorithms are efficient when the computational cost of performing the projection is low, {e.g., projecting onto a hypercube or simplex.} In many practical situations of interest, however, the cost of projecting onto $\mathcal{X}$ can be high, e.g., dealing with a trace norm ball or a base polytope $\mathcal{X}$  in submodular minimization  \cite{pjo2011SFujishige}. %The projection operation is even computationally difficult to deal with in extreme cases \cite{jmlr2008MCollins}.
% Hence, it is necessary to investigate projection-free methods for problem (\ref{ss1}).

An alternative for tackling problem \eqref{ss1} is the projection-free methods, including the Frank-Wolfe (FW) \cite{nrl1956Frank} and conditional gradient sliding \cite{sjo2016GLan}.
%As far as we know, there are two types of projection-free oracles for solving constrained optimization problems. The first kind of projection-free algorithms are based on a linear optimization oracle (LOO) to access feasible set, such as Frank-Wolfe method. An additional natural projection-free setting is given by separation oracle (SO), which is effective for solving problem with the constraint of max structure. 
In this paper, we focus on the FW algorithm, which is also known as conditional gradient method \cite{nrl1956Frank}. Classical FW methods circumvent the projection step by first solving a linear minimization subproblem over the constraint set $\mathcal{X}$ to obtain a sort of conditional gradient $\theta_k$, which is followed by updating $x_{k+1}$ through a convex combination of the current iteration variables $x_k$ and $\theta_k$. 
%\blue{When dealing with structural constraints such as nuclear norm balls and total variation norm balls, an efficient implementation manner or even a closed-form solution for computing $\theta_k$ is available, resulting in reduced computational complexity compared with projection steps. In addition, when initializing well, FW directly promotes low rank (sparse) solutions when the constraint set is a nuclear norm ball.}
On top of this idea, a number of modifications have been proposed to improve or accelerate the FW method in algorithm design or convergence analysis, see e.g., \cite{tsp2021DSKalhan,picais2021TKerdreux,tsp2021BLi,ICASSP2021YZhang,pol2012EHazan,icml2016EHazan,JAS-2020-0016,jmlr2020AMokhtari,acc2021ZAkhtar,Allerton2016SJReddi,ijna2021MWeber}. 

Nonetheless, most existing projected stochastic gradient descent (pSGD) and  Frank-Wolfe variants are designed for constrained centralized problems, and they cannot directly handle distributed problems.
Therefore, it is necessary to develop distributed stochastic projection-free methods for problem \eqref{ss1}.  In addition, stochastic FW methods may not converge even in the centralized convex case,  without increasing the batch size \cite{jmlr2020AMokhtari}. In this context, a natural question arises: \textit{is it possible to develop a distributed FW method by using any fixed batch size for problem \eqref{ss1}, while enjoying a convergence rate comparable to that of centralized stochastic FW methods?}
%Although a large number of distributed stochastic optimization methods based on projection technology have been reported, there are very few studies on distributed Frank-Wolfe optimization. 
In this paper, we answer this question affirmatively, by carefully designing a distributed stochastic FW algorithm, which converges for any fixed batch size (can be as small as 1) and enjoys a comparable convergence rate as in the centralized stochastic setting.

\IEEEpubidadjcol

 \subsection{Related Works}

  \textbf{Projection-free stochastic} algorithms for addressing stochastic optimization problems were widely studied in recent years. When both the global function $F$ and the feasibility set $\mathcal{X}$ in \eqref{ss1} are convex,  an online stochastic FW method using minibatches was proposed and shown to converge at a rate of  $\mathcal{O}(k^{-\frac{1}{4}})$  \cite{pol2012EHazan}. By progressively increasing the batch size per iteration, convergence rate of stochastic FW algorithms was improved to $\mathcal{O}(k^{-\frac{1}{3}})$ in \cite{icml2016EHazan}. The work \cite{jmlr2020AMokhtari} further relaxed the requirement of increasing the batchsize by using a fixed small batch size along with some heuristics, while maintaining the convergence rate of  $\mathcal{O}(k^{-\frac{1}{3}})$. Faster rate $\mathcal{O}(k^{-\frac{1}{2}})$ was obtained by merging the Nesterov's momentum and classical FW method in \cite{acc2021ZAkhtar}.  When $F$ becomes nonconvex, \cite{Allerton2016SJReddi} proposed a stochastic FW variant, and established a convergence rate of $\mathcal{O}(k^{-\frac{1}{4}})$ for handling nonconvex stochastic optimization problems. Lately, \cite{ijna2021MWeber} studied nonconvex stochastic optimization on Riemannian manifolds and presented a projection-free stochastic algorithm which achieves the same convergence rate as in \cite{Allerton2016SJReddi}.
% To establish faster rates, \cite{reff39} proposed an accelerated stochastic zeroth-order FW algorithm, which achieves $O(k^{-\frac{1}{3}})$ convergence rate.
 It is worth noting that the aforementioned FW methods are all centralized. Thus far, distributed projection-free stochastic algorithms have rarely been studied.

\textbf{Distributed FW} methods play an important role in distributed convex and nonconvex optimization, a sample of which can be found in \cite{SDM2015ABellet,GlobalSIP2016HWai,tac2017HWai,arxi2021GChen,tsp2017LZhang,AISTATS2019JXie,IJCAI2021HGao}. In the deterministic case, a distributed FW algorithm was developed in \cite{GlobalSIP2016HWai} for a class of nonconvex optimization problems. The work \cite{tac2017HWai} further devised distributed FW algorithms, and showed convergence rate of $\mathcal{O}(k^{-1})$ for convex optimization and $\mathcal{O}(k^{-\frac{1}{2}})$ for nonconvex optimization. For submodular maximization, \cite{AISTATS2019JXie} proposed two distributed algorithms for deterministic and stochastic optimization, and obtained the convergence rate of $\mathcal{O}(k^{-\frac{1}{3}})$ for stochastic optimization. The convergence rate in \cite{AISTATS2019JXie} was improved  to $\mathcal{O}(k^{-\frac{1}{2}})$ \cite{IJCAI2021HGao} by using variance reduction techniques and gradient tracking strategies.
%However, the above-mentioned distributed FW algorithms theoretically solve deterministic and special cases of stochastic optimization problems, cannot  be  applied to solve general distributed stochastic optimization problems.  %As far as we know, there is no distributed projection-free algorithm that can solve problem (\ref{ss1}).

Although considerable results have been reported for distributed FW in deterministic settings, they cannot be directly applied in and/or generalized to stochastic settings. The reason is twofold: i) FW may diverge due to the non-vanishing variance in gradient estimates; and, ii) the desired convergence rate of FW for
 stochastic optimization is not guaranteed to be comparable to pSGD, even for the centralized setting.

To address these challenges, the present paper puts forth a distributed stochastic version of the celebrated FW algorithm for stochastic optimization over networks. The main idea behind our proposal is a judicious combination of the recursive momentum \cite{panips2019ACutkosky} and the Nesterov's momentum \cite{dans1983Nesterov}. On the theory side, it is shown that the proposed algorithm can not only attenuate the noise in gradient approximation, but also achieve a convergence guarantee comparable to pSGD in convex case. Comparison of the proposed algorithm in context is provided in Table \ref{table1}.

%In this work, we study the distributed stochastic projection-free algorithm for stochastic optimization problems. The proposed method reduces the noise of gradient approximations by the idea of the recursive gradient tracking approach of \cite{reff35}. It is worth mentioning that the proposed method only needs a fixed small batch size in each iteration, even as small as 1, and achieves the same optimal convergence guarantee as the stochastic centralized FW methods in convex case.

\begin{table}[!]
\begin{center}
\caption{Convergence Rate for Stochastic Optimization}\label{table1}
\begin{tabular}{m{1.25cm}<{\centering}|c|m{1.8cm}<{\centering}|m{1.38cm}<{\centering}|c}
\hline
Reference & Setting & Projection-free & Function & Rate\\\hline
RSA \cite{sjo2009ANemirovski}&centralized &unconstrained & smooth \red{convex} & $\mathcal{O}(k^{-\frac{1}{2}})$\\\hline
RSG \cite{sjo2013SGhadimi}&centralized&no& smooth \red{nonconvex}& $\mathcal{O}(k^{-\frac{1}{2}})$\\\hline
SPPDM \cite{tsp2021Zwang}&\red{distributed}&unconstrained& nonsmooth \red{nonconvex}& $\mathcal{O}(k^{-\frac{1}{2}})$\\\hline
%\multicolumn{4}{c}{}\\\hline
OFW \cite{pol2012EHazan}& centralized& yes&smooth\newline $Q$-Lipschitz \red{convex} & $\mathcal{O}(k^{-\frac{1}{4}})$\\\hline
SFW \cite{jmlr2020AMokhtari}& centralized& yes&smooth \red{convex}& $\mathcal{O}(k^{-\frac{1}{3}})$\\\hline
MSHFW \cite{acc2021ZAkhtar}&centralized&yes&smooth \red{convex}& $\mathcal{O}(k^{-\frac{1}{2}})$\\\hline
NSFW \cite{Allerton2016SJReddi}&centralized&yes&smooth  $Q$-Lipschitz \red{nonconvex}& $\mathcal{O}(k^{-\frac{1}{4}})$\\\hline
SRFW \cite{ijna2021MWeber}&centralized&yes&smooth  $Q$-Lipschitz \red{nonconvex}& $\mathcal{O}(k^{-\frac{1}{4}})$\\\hline
%Acc-SZOFW* \cite{reff39}&Centralized&Yes&Smooth  \red{Nonconvex}& $O(k^{-\frac{1}{3}})$\\\hline
\multirow{2}{*}{\textbf{This Work}}&\multirow{2}{*}{\red{distributed}}&\multirow{2}{*}{yes}&smooth \red{convex}&$\mathcal{O}(k^{-\frac{1}{2}})$\\\cline{4-5}
 & & & smooth \red{nonconvex}&$\mathcal{O}(\frac{1}{\mathrm{log}_2(k)})$\\
\hline
\end{tabular}
\begin{tablenotes}
	\item[1] *  The function $f_i(x,\xi^i)$ in (\ref{ss1}) is $Q$-Lipschitz if $\|\nabla f_i(x,\xi^i)\|\leqslant Q$ for all $\xi^i$, where $Q$ is a positive constant \cite{Allerton2016SJReddi}; $k$ denotes the number of iterations.
\end{tablenotes}
\end{center}
\end{table}

 \subsection{Our contributions}
In succinct form, the contributions of this work are summarized as follows.
\begin{enumerate}
\item 	We propose a projection-free algorithm, referred to as the distributed momentum-based Frank-Wolfe (DMFW), for convex and nonconvex stochastic optimization over networks. Compared with the centralized FW methods \cite{pol2012EHazan,jmlr2020AMokhtari,acc2021ZAkhtar,Allerton2016SJReddi,ijna2021MWeber}, DMFW is considerably different in algorithm design and convergence analysis.

\item For convex objective functions, we establish a  convergence rate of $\mathcal{O}(k^{-\frac{1}{2}})$ for DMFW, which matches that of distributed pSGD \cite{sjo2009ANemirovski} and is even faster than those of centralized FW algorithms in \cite{pol2012EHazan,jmlr2020AMokhtari}.
\item For nonconvex objective functions,  we establish a convergence rate of $\mathcal{O}(1/\mathrm{log}_2(k))$ for DMFW, % \textcolor{red}{under mild conditions},
which, to the authors' best knowledge, marks the first FW's convergence rate result for distributed nonconvex stochastic optimization.

%\item  The proposed algorithm is tested on a class of binary classification problems with convex and nonconvex objective functions, respectively. Numerical simulation shows the efficacy of the algorithm.
\end{enumerate}

% \textit{Notations}: $\mathbb{R}$ is the set of real numbers; $\mathbb{R}^p$ stands for the set of $p$-dimensional real vector; $\langle\cdot\rangle$ denotes the inner product, and $(\cdot)^\mathrm{T}$ represents the transpose; $\|x\|$ denotes the Euclidean norm of the vector $x$.

\section{Preliminaries and Algorithm Design}
\subsection{Notation and preliminaries}

Let $\mathbb{R}$ denote the set of real numbers, and $\mathbb{R}^p$ the set of $p$-dimensional real vectors;
$\langle\cdot\rangle$ denotes the inner product; $(\cdot)^\mathrm{T}$ represents the transpose;
 $\|x\|$ denotes the $l_2$ norm (Euclidean norm) of vector $x$, and $\|x\|_q$ $(q\in[1,+\infty))$ symbols the $l_q$ norm of vector $x$; $\mathrm{max}\{\cdot\}$ denotes the maximum element in set $\{\cdot\}$; {$\lceil\cdot\rceil$ is the ceiling operation}; $\mathbb{E}[\cdot]$ is the expectation operator; $\mathbb{E}_k[\cdot]$ is the conditional expectation $\mathbb{E}[\cdot|\mathcal{F}_k]$ on the sigma field $\mathcal{F}_k$ which contains all types of randomness up to iteration $k$;  $C=[c_{ij}]_{n\times n}$ is the weighted adjacency matrix of graph $\mathcal{G}(\mathcal{N},\mathcal{E})$. For $\forall i,j\in\mathcal{N}$, if $(i,j)\in\mathcal{E}$, then $c_{ij}>0$, and $c_{ij}=0$ otherwise.

 Consider a differentiable function $F:\mathbb{R}^p\rightarrow \mathbb{R}$, whose gradient is $\nabla F(\cdot)$. The function is $L$-smooth over a convex set $\mathcal X$ if
$$F(x)-F(y)\leqslant \langle\nabla F(y),x-y\rangle+\frac{L}{2} \|x-y\|^2,~~\forall x,y\in\mathcal{X}$$
where $L>0$ is a constant.
The function $F$ is said to be convex over a convex set $\mathcal X$ if
$F(x)-F(y)\geqslant \langle\nabla F(y),x-y\rangle$ for all $x,y\in\mathcal{X}.$

\subsection{Algorithm design}
To solve problem \eqref{ss1}, we propose a distributed momentum-based Frank-Wolfe algorithm, which is summarized in Algorithm \ref{alg:1}.
\begin{algorithm}
	\renewcommand{\algorithmicrequire}{\textbf{Input:}}
	\renewcommand{\algorithmicensure}{\textbf{Output:}}
	\caption{Distributed Momentum-based Frank-Wolfe}
	\label{alg:1}
	\begin{algorithmic}[1]
		\REQUIRE number of iterations $K$, initial condition $x^i_{1}\in\mathcal{X}$, and $y^i_{1}=\nabla f_i(\hat{x}_{1}^i,\xi_{1}^i)=s_{1}^i$ for $\forall i\in\mathcal{N}$.
%		 and $p^1_i(0)=\partial_xf_i(\hat{x}^1_i(0),\hat{x}^2_i(0))\in\mathbb{R}^{n_1}$, $p^2_i(0)=\partial_xh_i(\hat{y}^1_i(0),\hat{y}^2_i(0))\in\mathbb{R}^{n_2}$
		\FORALL {$k=1,2,\cdots,K$}
		\STATE \textit{Average consensus:} 
		\begin{align}\label{alg1step1}
			\hat{x}^i_{k}=\sum\limits_{j\in\mathcal{N}_i}c_{ij}x^j_{k}
       \end{align}
		where $\mathcal{N}_i$ is the set of neighbors of node $i$.
		\STATE \textit{Momentum update:} 
         \begin{align}\label{algstep3}
          y^i_{k}=&(1-\gamma_k)y^{i}_{k-1}+\nabla f_i(\hat{x}^i_{k},\xi^i_{k})\nonumber\\
          &-(1-\gamma_k)\nabla f_i(\hat{x}^{i}_{k-1},\xi^i_{k})
         \end{align}
          where $\gamma_k\in(0,1]$ is a step size.
        \STATE \textit{Gradient tracking:} 
        \begin{align}\label{algeua1}
          s^i_{k}&= \sum\limits_{j\in\mathcal{N}_i}c_{ij}s^{j}_{k-1}+y^i_{k}-y^{i}_{k-1}\\\label{algeua2}
           p^i_{k}&=\sum\limits_{j\in\mathcal{N}_i}c_{ij}s^j_{k}
        \end{align}
		\STATE \textit{Frank-Wolfe step:} 
		 \begin{align}\label{algstep4}
		 	&\theta_k^i\in\mathop{\mathrm{argmin}}\limits_{\phi\in\mathcal{X}}\langle p^i_k,\phi\rangle\\\label{algstep5}
		 		 	&x^i_{k+1}=\hat{x}^i_k+\eta_k(\theta^i_k-\hat{x}^i_k)
		 \end{align}
		where $\eta_k\in(0,1]$ is a step size.
		\ENDFOR
		\STATE \textbf{return} $x_{k+1}^i$ for all $i\in\mathcal{N}$.
		\end{algorithmic}
\end{algorithm}
%At present, most of the algorithms to solve problem (\ref{ss1}) are based on projection computing \cite{reff33,reff34}. However, due to the high computational cost of projection operator in some cases, especially in high-dimensional case, the SGD method based on projection calculation will fail. In order to solve this bottleneck, the typical FW optimization algorithm is a very effective method, which uses a linear oracle instead of projection step.

\textit{Average consensus:}
%Since the adjacency matrix $C=[c_{ij}]_{n\times n}$ of graph $\mathcal{G}$ is not fully connected in distributed networks, it is necessary to exchange information with neighbors
We employ the average consensus (AC) protocol \cite{John1984Problems,ZLH_JAS_2014,RLXS_JAS_2021,Juan_SC_2022}, in which an agent  takes a weighted average of the values from its neighbors according to $C$.
%The average iterate approximation  (\ref{alg1step1}) shows.

\textit{Momentum update:} Because the distribution of $\xi^i$ in \eqref{ss1} is unknown, we can only have access to stochastic gradients of $F_i(x)$, that is, for a given $x\in\mathcal{X}$ and randomly sampled $\xi^i$, the oracle returns $\nabla f_i(x,\xi^i)$, which is assumed to be an unbiased estimate of $\nabla F_i(x)$. It is well known that the naive stochastic implementation of Frank-Wolfe by replacing $\nabla F_i(x)$ with $\nabla f_i(x,\xi^i)$, may diverge due to the non-vanishing variance of $\nabla f_i(x,\xi^i)$. To address this issue, we generalize the recursive momentum in \cite{panips2019ACutkosky} to distributed stochastic optimization.

\textit{Gradient tracking:}  Inspired by the gradient tracking method in \cite{tsin2016PDLorenzo,TNNLS2021Li},  which reuses the global gradient $p_{k-1}^i$ from the last iteration, agent $i$ at iteration $k$ approximates the global gradient via \eqref{algeua1} and \eqref{algeua2}. The initialization $s^i_{1}=\nabla f_i(\hat{x}_{1}^i,\xi_{1}^i)$ is set for $\forall i\in\mathcal{N}$.

\textit{Frank-Wolfe step:} A feasible direction $\theta_k^i$ is obtained by minimizing its correlation with $p_i(k)$ over $\mathcal{X}$ in \eqref{algstep4}. Subsequently,  the variable $x^i_{k+1}$ is generated as a convex combination of $\hat{x}^i_{k}$ and $\theta_k^i$.

% After solving the linear oracle in (\ref{algstep4}), the FW direction $\theta_k^i$ is finally used to update the iterate, as shown in (\ref{algstep5}).

\begin{remark}
There are two mechanisms for information exchanging with neighbors in DMFW: (i)  \textbf{average consensus}; and (ii) \textbf{gradient tracking}. 

In the \textbf{average consensus} step, agent $i$ approximates the average iteration by exchanging the latest iteration information with its neighbors.  In the \textbf{gradient tracking} step, agent $i$ approximates the global gradient by weighted averaging  $s^j_{k-1}$ and $y_k^i- y_{k-1}^i$, which is an estimate of local gradient difference.

\end{remark}

\begin{remark}
Compared with the existing distributed solutions \cite{tsin2016PDLorenzo,TNNLS2021Li,tac2017HWai,JSSC2021Distributed}, Algorithm \ref{alg:1} shares very similar structures: \textbf{global  consensus} steps plus \textbf{local adaptation} steps.

\textbf{global consensus}: Algorithm \ref{alg:1} realizes the distributed update by exploiting a twofold consensus-based
mechanism to: (i) enforce an agreement among the agents' estimates $\hat{x}^i_k$; and (ii) dynamically track the gradient of the whole cost function through an auxiliary variable $s^i_k$. 

\textbf{local adaptation}: Agent $i$ approximates the local gradient $y^i_k$ and updates its variable $x^i_{k+1}$ independently via local learning process by using $\hat{x}^i_k$ and $s^i_k$ obtained from the \textbf{global consensus} steps. 
%The proposed algorithm DMFW realizes the distributed update by exploiting a twofold consensus-based
%mechanism to: (i) enforce an agreement among the agents' estimates $\hat{x}^i_k$ and (ii) dynamically track the gradient of the whole cost function through an auxiliary variable $s^i_k$, called tracker. At the beginning of each iteration, agent $i$ constructs the averages $\hat{x}^i_k$ of its current tentative solution $x^i_k$ with the corresponding quantities of its neighbors (cf. \textbf{Average Consensus}). Then, it approximates the local gradient $y^i_k$ independently by using local learning process as in \textbf{Momentum update}, where only local information is used, including the latest local gradient $y^i_{k-1}$ and the stochastic gradient of $f_i$, i.e., $\nabla f_i(\hat{x}^i_k,\xi^i_k)$ and $\nabla f_i(\hat{x}^i_{k-1},\xi^i_k)$. In \textbf{Gradient tracking}, agent $i$ updates the value of the tracker variable $s^i_k$ via exchanging the information of the latest tracker $s^j_{k-1}~(j\in\mathcal{N}_i)$ with neighbors. Once $s^i_k$ is obtained, the aggregative gradient $p_{k}^i$ can be computed by using the information of tracker $s^j_{k}~(j\in\mathcal{N}_i)$ from neighbors. Finally, agent $i$ updates its iterate $x^i_{k+1}$ independently via local learning process as in \textbf{Frank-Wolfe Step}. Note that even if each agent stores and updates six quantities, it only shares with the neighbors two of them: the tentative solution $x^i_k$ and the tracker $s^i_k$. 
\end{remark}

 \begin{remark}
% 	The distributed projection-free algorithms are widely studied, including deterministic setting \cite{reff20,reff15} and stochastic setting \cite{reff40,reff19}.
 It is worth mentioning that Algorithm \ref{alg:1} works with a single stochastic gradient (i.e., with batch size as small as $1$), unlike the methods in \cite{AISTATS2019JXie},  which requires increasing the batch sizes as the number of iterations $k$ grows.
\end{remark}

\section{Main Results}
In this section, we establish the convergence results of the proposed algorithm for convex and nonconvex problems, respectively. Before providing the results, we outline some standing assumptions and facts.
% for later theoretical analysis.
\subsection{Assumptions and facts}
\begin{assumption}[Weight rule]\label{ass1}
	The weighted adjacency matrix $C$ is a doubly stochastic matrix, i.e.,  the row sum and the column sum of $C$ are all $1$.
	\end{assumption}
	
%	Assumption \ref{ass1} indicates that the use of information from an agent's neighbors through a weighted average.
	Assumption \ref{ass1} indicates that for each round of the \textit{Average Consensus} step of Algorithm \ref{alg:1},  the agent takes a weighted average of the values from its neighbors according to $C$.
	
%	The next assumption is about some properties of the weighted adjacency matrix $C$.
	\begin{assumption}[Connectivity]\label{asssss2}
	The network $\mathcal{G}$ is connected. 
	\end{assumption}
%	It is noticed that the existence of the matrix $C$ in Assumption \ref{asssss2} is guaranteed if the network is connected.
%	 {Existence of a matrix $C$ obeying Assumption \ref{asssss2} is guaranteed if the network $\mathcal{G}$ is connected and Assumption \ref{ass1} holds.}
	 
	 If Assumptions \ref{ass1} and \ref{asssss2} hold, the magnitude of the second largest eigenvalue  of the weighted adjacency matrix $C$, denoted by $\lambda$, is strictly less than one, i.e., $|\lambda|<1$ \cite{tac2017HWai}.
	 The following fact holds for any doubly stochastic matrix $C$.

	\textbf{Fact 1} Let  $\bar{x}=\frac{1}{n}\sum_{i=1}^{n}x^i$ and $\hat{x}^i=\sum_{j=1}^{n}c_{ij}x^j$. Then, the following inequality holds
	\begin{align}\label{fact1}
		\Big(\sum_{i=1}^{n}\|\hat{x}^i-\bar{x}\|^2\Big)^{\frac{1}{2}}\leqslant |\lambda|\Big(\sum_{i=1}^{n}\|x^i-\bar{x}\|^2\Big)^{\frac{1}{2}}.
	\end{align}

If Assumptions \ref{ass1} and  \ref{asssss2} hold, Fact 1 implies that each  \textit{Average Consensus} update brings the iteration variables closer to their average $\bar{x}$. For convenience, we define $k_0$ to be the smallest positive integer such that
%	\begin{align}
		$|\lambda|\leqslant[k_0/(k_0+1)]^2$.
%	\end{align}
	Clearly,  $k_0=\lceil(|\lambda|^{-\frac{1}{2}}-1)^{-1}\rceil$. The next assumption is on the constraint set of $\mathcal{X}$, which is a standard requirement for analyzing FW methods.
	
\begin{assumption}[Constraint set]\label{assumption3}
	$\mathcal X$ is a convex and compact set with diameter $D$, i.e., there is some constant $D>0$ such that $\|x-x'\|\leqslant D$ for all $x,x'\in\mathcal{X}$.
\end{assumption}	
%
%Assumption \ref{assumption3} is standard  in the FW method.

%plays a key role in the proof.
\begin{assumption}[$L$-smoothness]\label{ass2}
	Functions $F_i(x)$ and $f_i(x,\xi^i)$  are $L$-smooth with respect to $x$ for all  $\xi^i$ and $i\in\mathcal{N}$.
\end{assumption}
%The following fact gives some properties about $L$-smooth of the objective functions, which will be used in our theoretical proof.
%
%\textbf{Fact 2}  The functions $\mathbf{f}_i(x)$ and $f_i(x,\xi_i)$ are said to be $L$-smooth if
%\begin{align}
%	&\|\nabla f_i(x,\xi_i)-\nabla f_i(y,\xi_i)\|\leqslant L\|x-y\|,~~\forall x,y\in\mathbb{R}^m,\nonumber\\
%	&\|\nabla \mathbf{f}_i(x)-\nabla \mathbf{f}_i(y)\|\leqslant L\|x-y\|,~~\forall x,y\in\mathbb{R}^m.\nonumber
%\end{align}
%\begin{assumption}
%	(Bounded Euclidean norm) The Euclidean norm of each $\nabla f_i(x,\xi_i)~(x\in\mathcal{X})$  for all $i\in\{1,2,\cdots,n\}$ is bounded by $G$:
%	\begin{align}
%		&\|\nabla f_i(x,\xi_i)\|\leqslant G.\nonumber
%	\end{align}
%\end{assumption}

\begin{assumption}[Bounded stochastic gradients]\label{assup5}
	The variance of the stochastic gradient $\nabla f_i(x,\xi^i)~(x\in\mathcal{X},~ i\in\mathcal{N})$ is bounded, that is,
		$\mathbb{E}[\|\nabla F_i(x)-\nabla f_i(x,\xi^i)\|^2]\leqslant\delta^2.$
\end{assumption}
Assumption \ref{assup5} is standard for stochastic FW algorithms. The bound will be frequently used for convergence analysis.

\textbf{Fact 2} Suppose Assumptions  \ref{assumption3} and \ref{assup5} hold. There exists a constant $G>0$ such that $\mathbb{E}[\|\nabla f_i(x,\xi^i)\|^2]\leqslant G^2$ and $\mathbb{E}[\|\nabla f_i(x,\xi^i)\|]\leqslant G$.
\begin{proof}
	It follows from the Jensen's inequality that
	\begin{align}\label{fact3eq1}
		\mathbb{E}[\|\nabla F_i(x)\!-\!\nabla f_i(x,\xi^i)\|^2]
%		=&\|\nabla F_i(x)\|^2-2\mathbb{E}[\nabla f_i(x,\xi)]\nabla \mathbf{f}_i(x)+\mathbb{E}[\|\nabla f_i(x,\xi)\|^2]\nonumber\\
%		\geqslant&\|\nabla \mathbf{f}_i(x)\|^2-2\mathbb{E}[\nabla f_i(x,\xi)]\nabla \mathbf{f}_i(x)+(\mathbb{E}[\|\nabla f_i(x,\xi)\|])^2\nonumber\\
%		\geqslant&\|\nabla \mathbf{f}_i(x)\|^2-2\mathbb{E}[\nabla f(x,\xi)]\nabla \mathbf{f}_i(x)+(\mathbb{E}[\nabla f_i(x,\xi)])^2\nonumber\\
	&	\geqslant(\mathbb{E}[\|\nabla f_i(x,\xi^i)-\nabla F_i(x)\|])^2\nonumber\\
		&\geqslant\|\mathbb{E}[\nabla\! f_i(x,\!\xi^i)]\!-\!\nabla\! F_i(x)\|^2\!.
	\end{align}
%	, that is, $\mathbb{E}[\|\nabla F_i(x)-\nabla f_i(x,\xi)\|^2]\geqslant(\mathbb{E}[\|\nabla F_i(x)-\nabla f_i(x,\xi)\|])^2$ because of $(\cdot)^2$ is a convex function
	 Thus, it follows from  (\ref{fact3eq1}) and Assumption \ref{assup5} that $\|\mathbb{E}[\nabla f_i(x,\xi^i)]-\nabla F_i(x)\|^2\leqslant\delta^2$, i.e., $\|\mathbb{E}[\nabla f_i(x,\xi^i)]-\nabla F_i(x)\|\leqslant\delta$. In addition, we also obtain that $\mathbb{E}[\nabla f_i(x,\xi^i)]$ is bounded because $\nabla F_i(x)$ is bounded followed by Assumption \ref{assumption3}. In the meanwhile,  we have
	$
		\|\nabla F_i(x)\|^2-2\mathbb{E}[\nabla f_i(x,\xi^i)]^{\mathrm{T}}\nabla F_i(x)+\mathbb{E}[\|\nabla f_i(x,\xi^i)\|^2]\leqslant \delta^2
	$ from (\ref{fact3eq1}). Hence, $\mathbb{E}[\|\nabla f_i(x,\xi^i)\|^2]$ has an upper bound, which implies that there is a scalar $G_i$ that $\mathbb{E}[\|\nabla f_i(x,\xi^i)\|^2]\leqslant G^2_i$. Because $G^2_i\geqslant\mathbb{E}[\|\nabla f_i(x,\xi^i)\|^2]\geqslant(\mathbb{E}[\|\nabla f_i(x,\xi^i)\|])^2$, it has $(\mathbb{E}[\|\nabla f_i(x,\xi^i)\|])^2\leqslant G^2_i$, that is $\mathbb{E}[\|\nabla f_i(x,\xi^i)\|]\leqslant G_i$. Let  $G:=\mathop{\mathrm{max}}\limits_{i\in\mathcal{N}}\{G_i\}$. We have that $(\mathbb{E}[\|\nabla f_i(x,\xi^i)\|])^2\leqslant G^2$ and $\mathbb{E}[\|\nabla f_i(x,\xi^i)\|]\leqslant G$.
\end{proof}

\begin{remark}
	 This paper makes weaker assumptions on objective functions. Specifically, compared with \cite{pol2012EHazan}, \cite{Allerton2016SJReddi}  and \cite{ijna2021MWeber}, we do not require  the $Q$-Lipschitz continuity of $f_i(x,\xi^i)$ in \eqref{ss1}, i.e., $\|\nabla f_i(x,\xi^i)\|\leqslant Q$ for all $\xi^i$, where $Q$ is a positive constant.
%	In addition, the proposed method does not use minibatches and only requires a fixed small batch size (as small as 1) per iteration.
\end{remark}

\subsection{Convergence rate for convex stochastic optimization}

This subsection is dedicated to the performance analysis of Algorithm \ref{alg:1}. Let us start by defining the following auxiliary vectors
\begin{align}
	\bar{x}_k:=\frac{1}{n}\sum_{i=1}^{n}x_k^i,~~~\bar{y}_k:=\frac{1}{n}\sum_{i=1}^{n}y_k^i,~~~
	\bar{P}_k:=\frac{1}{n}\sum_{i=1}^{n}\nabla F_i(\hat{x}_k^i).\nonumber
	%\bar{s}(k)=&\frac{1}{n}\sum_{i=1}^{n}s_i(k).\nonumber
\end{align}
%Several useful lemmas are presented in Appendix \ref{proofA}.
We begin our analysis by characterizing the behavior of $\{\hat{x}_k^i\}$ for all $i\in\mathcal{N}$ in the next lemma.
\begin{lemma}\label{lemmama1}
Suppose Assumptions \ref{ass1}-\ref{assumption3} hold. Let $\eta_k=\frac{2}{k+2}$. Then, for any $i\in\mathcal{N}$ and $k\geqslant 1$, we have
	\begin{align}\label{lem1equ}
		\|\hat{x}_k^i-\bar{x}_k\|\leqslant \frac{2C_1}{k+2},
	\end{align}
	where $C_1=k_0\sqrt{n}D$.
\end{lemma}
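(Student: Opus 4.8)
The plan is to control the \emph{aggregate} consensus error rather than each node separately, and only pass to the per-node bound at the very end. Define $U_k:=\big(\sum_{i=1}^n\|x_k^i-\bar x_k\|^2\big)^{1/2}$ and $\hat U_k:=\big(\sum_{i=1}^n\|\hat x_k^i-\bar x_k\|^2\big)^{1/2}$, and observe that $\|\hat x_k^i-\bar x_k\|\le\hat U_k$ for every $i$, so it suffices to prove $\hat U_k\le\frac{2k_0\sqrt nD}{k+2}$. The first step is to derive a one-step recursion for $\hat U_k$. Since $C$ is column-stochastic (Assumption \ref{ass1}), $\frac1n\sum_i\hat x_k^i=\bar x_k$, so averaging the Frank--Wolfe update \eqref{algstep5} gives $\bar x_{k+1}=(1-\eta_k)\bar x_k+\eta_k\bar\theta_k$ with $\bar\theta_k:=\frac1n\sum_i\theta_k^i$. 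Subtracting, each node obeys the identity $x_{k+1}^i-\bar x_{k+1}=(1-\eta_k)(\hat x_k^i-\bar x_k)+\eta_k(\theta_k^i-\bar\theta_k)$.

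Applying the triangle inequality in the stacked ($\ell_2$-over-nodes) norm, and using that $\theta_k^i,\bar\theta_k\in\mathcal X$ by convexity (so $\|\theta_k^i-\bar\theta_k\|\le D$ by Assumption \ref{assumption3}, whence $\big(\sum_i\|\theta_k^i-\bar\theta_k\|^2\big)^{1/2}\le\sqrt nD$), I obtain $U_{k+1}\le(1-\eta_k)\hat U_k+\eta_k\sqrt nD$. Invoking Fact 1 in the form $\hat U_{k+1}\le|\lambda|U_{k+1}$ then yields the clean scalar recursion $\hat U_{k+1}\le|\lambda|(1-\eta_k)\hat U_k+|\lambda|\eta_k\sqrt nD$. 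I would stress that the contraction factor $(1-\eta_k)$ must be retained here: replacing it by $1$ makes the recursion too loose to close the induction for large $k_0$.

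With $\eta_k=\frac{2}{k+2}$ (so $1-\eta_k=\frac{k}{k+2}$), I would finish by induction on the claim $\hat U_k\le\frac{2k_0\sqrt nD}{k+2}$. The base case $k=1$ follows from $\hat U_1\le|\lambda|U_1\le|\lambda|\sqrt nD$ (all $x_1^i\in\mathcal X$ give $U_1\le\sqrt nD$) together with $|\lambda|\le[k_0/(k_0+1)]^2\le\frac{2k_0}{3}$. For the inductive step, substituting the hypothesis into the recursion reduces the target bound $\hat U_{k+1}\le\frac{2k_0\sqrt nD}{k+3}$ to the scalar inequality $|\lambda|(kk_0+k+2)(k+3)\le k_0(k+2)^2$; using the defining property $|\lambda|\le[k_0/(k_0+1)]^2$ of $k_0$, this follows from $k_0(kk_0+k+2)(k+3)\le(k_0+1)^2(k+2)^2$, which holds for all $k_0\ge1,\ k\ge1$ because the difference of the two sides is a polynomial in $k$ with nonnegative coefficients. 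Since $\|\hat x_k^i-\bar x_k\|\le\hat U_k$, this establishes the lemma with $C_1=k_0\sqrt nD$. The main obstacle is setting up the recursion with the correct $(1-\eta_k)$ factor and matching constants so that the definition of $k_0$ is exactly what makes the inductive step close; the concluding algebraic inequality itself is routine.
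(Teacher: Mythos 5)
Your proof is correct, and it shares the paper's high-level strategy---induct on the aggregate consensus error $\hat U_k=\big(\sum_{i=1}^n\|\hat x_k^i-\bar x_k\|^2\big)^{1/2}$, using Fact 1, the diameter bound, and the defining property $|\lambda|\leqslant[k_0/(k_0+1)]^2$---but the inductive mechanism is genuinely different. The paper expands the square of $(1-\eta_k)(\hat x_k^i-\bar x_k)+\eta_k(\theta_k^i-\bar\theta_k)$ and immediately discards the contraction via $1-\eta_k\leqslant 1$; as a consequence its inductive step only closes when $k_0\leqslant k+2$ (this is where the monotonicity of $v/(1+v)$ enters), and all iterations $k\leqslant k_0-2$ must be treated as separate base cases, which hold trivially because $\hat U_k\leqslant\sqrt nD\leqslant 2C_1/(k+2)$ in that range. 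You instead keep the factor $1-\eta_k$, apply Minkowski's inequality to get the linear recursion $\hat U_{k+1}\leqslant|\lambda|(1-\eta_k)\hat U_k+|\lambda|\eta_k\sqrt nD$, and this tighter recursion lets the induction run uniformly from $k=1$ with no case split: the step reduces to $k_0(kk_0+k+2)(k+3)\leqslant(k_0+1)^2(k+2)^2$, whose two sides differ by a polynomial in $k$ with coefficients $k_0+1$, $k_0^2+3k_0+4$ and $4k_0^2+2k_0+4$, all positive, so it indeed holds for every $k\geqslant1$ and $k_0\geqslant1$. Your route buys a cleaner, uniform induction; the paper's coarser route shows the contraction factor is dispensable provided one splits off the small-$k$ cases. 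This also means your parenthetical claim that $(1-\eta_k)$ \emph{must} be retained is too strong: dropping it does not doom the argument, it merely forces the paper's split into trivial base cases $k\leqslant k_0-2$ and inductive steps $k\geqslant k_0-2$. That overstatement is the only blemish; the proof itself is sound.
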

The proof is presented in Appendix \ref{proofB}. Lemma \ref{lemmama1} shows that $\|\hat{x}_k^i-\bar{x}_k\|=\mathcal{O}(1/k)$, which implies that $\|\hat{x}_k^i-\bar{x}_k\|$  converges to zero  as $k\rightarrow\infty$. By selecting appropriate step sizes $\gamma_k$ and $\eta_k$, we establish the boundedness of $\|p_k^i-\bar{y}_k\|^2$ for all $i\in\mathcal{N}$ in the next lemma.

\begin{lemma}\label{lemma45}
	Suppose Assumptions \ref{ass1}-\ref{assup5} hold. Choose the step sizes $\gamma_k=\frac{2}{k+1}$ and $\eta_k=\frac{2}{k+2}$. Then, for any $i\in\mathcal{N}$ and $k\geqslant 1$, it holds
	\begin{align}\label{lemma45eq1}
		\mathbb{E}[\|p_k^i-\bar{y}_k\|^2]\leqslant& \frac{4C_2}{(k+2)^2},
	\end{align}
 where $C_2=k_0^3(4n)^{k_0}n(12L^2(D+2C_1)^2+12(G^2+\hat{\psi}))$,  $\hat{\psi}=\mathop{\mathrm{max}}\limits_{i\in\mathcal{N}}\{\|y_1^i\|^2,4L(D+2C_1)\psi+4G\psi+8G^2+8L^2(D+2C_1)^2\}$,  $\psi=\mathop{\mathrm{max}}\limits_{i\in\mathcal{N}}\{\|y_1^i\|,2G+2L(D+2C_1)\}$.
\end{lemma}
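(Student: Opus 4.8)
The plan is to reduce the bound on the gradient-tracking error $\|p_k^i-\bar y_k\|$ to a geometric contraction driven by the successive increments of the momentum variables, in four stages. First I would record the conservation law that underlies gradient tracking: summing \eqref{algeua1} over $i$ and invoking the column-stochasticity of $C$ (Assumption \ref{ass1}) yields $\frac{1}{n}\sum_i s_k^i=\frac{1}{n}\sum_i s_{k-1}^i+\bar y_k-\bar y_{k-1}$, so that the initialization $s_1^i=y_1^i$ gives, by induction, $\bar s_k:=\frac{1}{n}\sum_i s_k^i=\bar y_k$; averaging \eqref{algeua2} then gives $\bar p_k:=\frac{1}{n}\sum_i p_k^i=\bar y_k$. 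Consequently $\|p_k^i-\bar y_k\|=\|p_k^i-\bar p_k\|$, and the task becomes one of bounding a consensus-type disagreement in the tracking variables.

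Next I would set up the contraction recursion. With $a_k=(\sum_i\|s_k^i-\bar s_k\|^2)^{1/2}$, subtracting the mean from \eqref{algeua1}, applying Fact 1 to the consensus part, and using that centering cannot increase the stacked norm, I obtain the pointwise estimate $a_k\leqslant|\lambda|a_{k-1}+b_k$, where $b_k=(\sum_i\|(y_k^i-y_{k-1}^i)-(\bar y_k-\bar y_{k-1})\|^2)^{1/2}\leqslant(\sum_i\|y_k^i-y_{k-1}^i\|^2)^{1/2}$. Taking the $L^2(\mathbb{P})$-norm and invoking Minkowski's inequality converts this into the deterministic recursion $A_k\leqslant|\lambda|A_{k-1}+B_k$ with $A_k=(\mathbb{E}[a_k^2])^{1/2}$ and $B_k=(\mathbb{E}[b_k^2])^{1/2}$; this is the device that avoids the cross terms one would otherwise meet when squaring the random inequality.

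I would then bound the forcing term. Rewriting \eqref{algstep3} as $y_k^i-y_{k-1}^i=-\gamma_k y_{k-1}^i+[\nabla f_i(\hat x_k^i,\xi_k^i)-\nabla f_i(\hat x_{k-1}^i,\xi_k^i)]+\gamma_k\nabla f_i(\hat x_{k-1}^i,\xi_k^i)$ and using $\|u+v+w\|^2\leqslant 3(\|u\|^2+\|v\|^2+\|w\|^2)$, the three pieces are controlled by: (i) the second-moment bound $\mathbb{E}[\|y_{k-1}^i\|^2]\leqslant\hat\psi$; (ii) $L$-smoothness (Assumption \ref{ass2}) together with $\|\hat x_k^i-\hat x_{k-1}^i\|\leqslant\eta_{k-1}(D+2C_1)$, which follows from $\|\hat x_k^i-\hat x_{k-1}^i\|\leqslant\|\hat x_k^i-\bar x_k\|+\|\bar x_k-\bar x_{k-1}\|+\|\bar x_{k-1}-\hat x_{k-1}^i\|$, Lemma \ref{lemmama1}, and $\|\bar x_k-\bar x_{k-1}\|\leqslant\eta_{k-1}D$ (this last bound, via the diameter, is where the factor $D+2C_1$ is born); and (iii) Fact 2. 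Since $\gamma_k=\eta_{k-1}=\tfrac{2}{k+1}$, this produces $B_k^2\leqslant\frac{12n(\hat\psi+L^2(D+2C_1)^2+G^2)}{(k+1)^2}$. The two moment bounds $\mathbb{E}[\|y_k^i\|]\leqslant\psi$ and $\mathbb{E}[\|y_k^i\|^2]\leqslant\hat\psi$ are themselves proved beforehand by induction on $k$: the first collapses to a convex combination because $\gamma_k=\eta_{k-1}$, the second uses $(1-\gamma_k)^2\leqslant 1-\gamma_k$, and the stated values of $\psi$ and $\hat\psi$ are exactly what makes these inductive steps close.

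Finally I would solve the scalar recursion $A_k\leqslant|\lambda|A_{k-1}+\frac{c}{k+1}$, with $c=\sqrt{12n(\hat\psi+L^2(D+2C_1)^2+G^2)}$, by induction against the ansatz $A_k\leqslant\frac{\sqrt{4C_2}}{k+2}$. For $k\geqslant k_0$ the inductive step closes because the defining inequality $|\lambda|\leqslant[k_0/(k_0+1)]^2$ forces $|\lambda|<\frac{k+1}{k+2}$, leaving enough slack to absorb the forcing term; the warm-up indices $k<k_0$ have to be bounded crudely and folded into the constant, and it is this bookkeeping that inflates $C_2$ so as to carry the $k_0^3(4n)^{k_0}$ prefactor. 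Concluding, $p_k^i-\bar p_k=\sum_j c_{ij}(s_k^j-\bar s_k)$ and one further application of Fact 1 give $\mathbb{E}[\|p_k^i-\bar y_k\|^2]\leqslant\mathbb{E}[\sum_i\|p_k^i-\bar p_k\|^2]\leqslant|\lambda|^2 A_k^2\leqslant\frac{4C_2}{(k+2)^2}$. I expect the genuine obstacle to be precisely this last stage—reconciling the fixed geometric contraction $|\lambda|$ with the $1/(k+2)^2$ decay dictated by the step sizes while keeping the constant under control across the first $k_0$ iterations; the momentum-and-smoothness estimates for $B_k$ are laborious but routine once Lemma \ref{lemmama1} is available.
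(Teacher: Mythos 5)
Your proposal is correct---each stage closes, and the stated $C_2$ is more than large enough to absorb both your warm-up and your inductive step---but its execution differs from the paper's proof in Appendix \ref{proofC} in a way worth recording. The two arguments share every key ingredient: the conservation identity $\bar s_k=\bar p_k=\bar y_k$ (Lemma \ref{lemma0}(a)), Fact 1 as the contraction mechanism, the increment bound $\mathbb{E}[\|y_{k+1}^i-y_k^i\|^2]\leqslant 3L^2(D+2C_1)^2\eta_k^2+3(G^2+\hat\psi)\gamma_{k+1}^2$ assembled from Lemmas \ref{lemmama1}, \ref{lemma32}, \ref{lemma4555} and Fact 2, and an induction whose first $\mathcal{O}(k_0)$ indices are folded into the constant. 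Where you genuinely differ is the domain in which the recursion is run. The paper inducts directly on the squared quantity $\mathbb{E}[\sum_{i}\|p_k^i-\bar y_k\|^2]\leqslant C_2\eta_k^2$: it writes $s_{k+1}^i=\Delta y_{k+1}^i+p_k^i$, applies Fact 1, expands $\sum_i\|p_k^i-\bar y_k+\Delta y_{k+1}^i-\Delta Y_{k+1}\|^2$, and must then tame the cross terms with H\"older's inequality $\mathbb{E}[|XY|]\leqslant(\mathbb{E}[X^2])^{1/2}(\mathbb{E}[Y^2])^{1/2}$ before the monotonicity of $v/(1+v)$ closes the induction; its warm-up moreover needs a separate exponential estimate $\mathbb{E}[\|s_k^i\|^2]\leqslant(3n)^kG^2+6k(3n)^{k-1}\hat\psi$, which is the true origin of the $(4n)^{k_0}$ factor inside $C_2$. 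You instead recurse on the $s$-consensus error in the $L^2(\mathbb{P})$ (square-root) domain, $A_k\leqslant|\lambda|A_{k-1}+B_k$ via Minkowski, convert to the $p$-error only at the very end by one more application of Fact 1, and handle the warm-up by the trivial accumulation $A_k\leqslant A_1+\sum_{2\leqslant j\leqslant k}B_j\leqslant\sqrt{n}G+ck_0$. This eliminates the cross-term/H\"older bookkeeping entirely (Minkowski in $L^2(\mathbb{P})$ does exactly that work) and would in fact support a noticeably smaller constant than the stated $C_2$; what the paper's squared-norm packaging buys in exchange is that its induction hypothesis is literally the claimed bound, so no final conversion or norm-domain switch is needed.
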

The proof is presented in Appendix \ref{proofC}.  In order to prove the convergence of Algorithm \ref{alg:1}, we provide the following lemma.

\begin{lemma}\label{lemmalemma34}
Suppose Assumptions \ref{ass1}-\ref{assup5} hold. Then,

(a) the conditional expectation of $\|\bar{P}_k-\bar{y}_k\|^2$ satisfies
\begin{align}
\mathbb{E}_k[\|\bar{P}_k-\bar{y}_k|\mathcal{F}_k\|^2]&\leqslant	(1-\gamma_k)\|\bar{P}_{k-1}-\bar{y}_{k-1}\|^2\nonumber\\
&\quad+6L^2(D+2C_1)^2\eta_{k-1}^2+3\gamma_{k}^2\delta^2\nonumber
\end{align}
for any $k\geqslant 2$,

(b) taking the step sizes $\gamma_k=\frac{2}{k+1}$ and $\eta_k=\frac{2}{k+2}$, the  expectation of  $\|\bar{P}_k-\bar{y}_k\|^2$ satisfies
\begin{align}\label{lemma44eq1}
	\mathbb{E}[\|\bar{P}_k-\bar{y}_k\|^2]\leqslant\frac{C_3}{k+2}
\end{align}
 for any $k\geqslant 1$, where $C_3=24L^2(D+2C_1)^2+12\delta^2$.
\end{lemma}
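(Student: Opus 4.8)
The plan is to prove part (a) by passing to the network-averaged momentum recursion and isolating a conditionally zero-mean innovation, and then to prove part (b) by unrolling the resulting scalar recursion by induction.

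\emph{Part (a).} First I would average the momentum update \eqref{algstep3} over $i$. Since $\gamma_k$ is common to all agents, this yields the clean recursion
\[
\bar y_k = (1-\gamma_k)\bar y_{k-1} + g_k - (1-\gamma_k)\tilde g_{k-1},
\]
where $g_k:=\tfrac1n\sum_i \nabla f_i(\hat x_k^i,\xi_k^i)$ and $\tilde g_{k-1}:=\tfrac1n\sum_i \nabla f_i(\hat x_{k-1}^i,\xi_k^i)$. Subtracting $\bar P_k$ and adding and subtracting $(1-\gamma_k)\bar P_{k-1}$, I would write
\[
\bar y_k - \bar P_k = (1-\gamma_k)(\bar y_{k-1}-\bar P_{k-1}) + Z_k,\quad Z_k := (g_k-\bar P_k) - (1-\gamma_k)(\tilde g_{k-1}-\bar P_{k-1}).
\]
Because $\hat x_k^i$ and $\hat x_{k-1}^i$ are determined by the history $\mathcal F_k$ while $\xi_k^i$ is the fresh sample drawn at iteration $k$, the unbiasedness of the oracle gives $\mathbb E_k[g_k]=\bar P_k$ and $\mathbb E_k[\tilde g_{k-1}]=\bar P_{k-1}$, hence $\mathbb E_k[Z_k]=0$. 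The cross term therefore vanishes in conditional expectation, and using $(1-\gamma_k)^2\le 1-\gamma_k$ on $[0,1]$ I obtain $\mathbb E_k[\|\bar y_k-\bar P_k\|^2]\le (1-\gamma_k)\|\bar y_{k-1}-\bar P_{k-1}\|^2+\mathbb E_k[\|Z_k\|^2]$.

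For the innovation I would regroup $Z_k = \gamma_k(g_k-\bar P_k) + (1-\gamma_k)(g_k-\tilde g_{k-1}) - (1-\gamma_k)(\bar P_k-\bar P_{k-1})$ and apply $\|a+b+c\|^2\le 3(\|a\|^2+\|b\|^2+\|c\|^2)$. By Jensen's inequality and Assumption \ref{assup5}, $\mathbb E_k[\|g_k-\bar P_k\|^2]\le \delta^2$, producing the $3\gamma_k^2\delta^2$ term. The two remaining terms are controlled by $L$-smoothness (Assumption \ref{ass2}): after Jensen, each reduces to $\tfrac1n\sum_i L^2\|\hat x_k^i-\hat x_{k-1}^i\|^2$. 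The crux is the one-step consensus-movement bound $\|\hat x_k^i-\hat x_{k-1}^i\|\le (D+2C_1)\eta_{k-1}$, which I would establish by writing $\hat x_k^i-\hat x_{k-1}^i=(\hat x_k^i-\bar x_k)+(\bar x_k-\bar x_{k-1})+(\bar x_{k-1}-\hat x_{k-1}^i)$, bounding the outer terms via Lemma \ref{lemmama1} and the middle term by $\|\bar x_k-\bar x_{k-1}\|\le \eta_{k-1}D$ (the Frank--Wolfe step \eqref{algstep5} moves each $x^i$ by at most $\eta_{k-1}D$, and double stochasticity preserves the average). Using $(1-\gamma_k)^2\le 1$, these two contributions sum to $6L^2(D+2C_1)^2\eta_{k-1}^2$, giving exactly the claimed recursion. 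I expect this movement bound, which splices together Lemma \ref{lemmama1} and the Frank--Wolfe update, to be the main technical obstacle.

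\emph{Part (b).} Taking total expectation in (a) and substituting $\gamma_k=\tfrac{2}{k+1}$ and $\eta_{k-1}=\tfrac{2}{k+1}$ gives the scalar recursion $a_k\le \tfrac{k-1}{k+1}a_{k-1}+\tfrac{C_3}{(k+1)^2}$, where $a_k:=\mathbb E[\|\bar P_k-\bar y_k\|^2]$ and $C_3=24L^2(D+2C_1)^2+12\delta^2$. I would then prove $a_k\le \tfrac{C_3}{k+2}$ by induction: the base case $k=1$ follows from the initialization $y_1^i=\nabla f_i(\hat x_1^i,\xi_1^i)$, which yields $a_1\le\delta^2\le C_3/3$ by Jensen's inequality and Assumption \ref{assup5}; the inductive step substitutes $a_{k-1}\le\tfrac{C_3}{k+1}$ to get $a_k\le \tfrac{kC_3}{(k+1)^2}$ and closes via the elementary inequality $k(k+2)\le(k+1)^2$. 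This part is routine once (a) is established.
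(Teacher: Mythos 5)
Your proposal is correct and follows essentially the same route as the paper's proof: the same averaged momentum recursion, the same three-term innovation decomposition (noise, stochastic-gradient difference, true-gradient difference) with the cross term killed by conditional unbiasedness, the same $L$-smoothness argument feeding off the movement bound $\|\hat{x}_k^i-\hat{x}_{k-1}^i\|\leqslant (D+2C_1)\eta_{k-1}$ (which the paper isolates as a separate technical lemma proved via the consensus weights, whereas you route it through $\bar{x}_k$; both rest on Lemma \ref{lemmama1}), and your direct induction in part (b) is just an unrolled instance of the generic recursion lemma the paper invokes. As a minor point in your favor, your base case is handled more carefully than the paper's, which asserts $\mathbb{E}[\|\bar{P}_1-\bar{y}_1\|^2]=0$ when the correct statement is the bound $\leqslant\delta^2\leqslant C_3/3$ that you give.
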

The proof is presented in Appendix \ref{proofD}. Lemma \ref{lemmalemma34} asserts that  the expectation of $\|\bar{P}_k-\bar{y}_k\|^2$ converges to zero as $k\rightarrow\infty$. Leveraging Lemma \ref{lemma45} and Lemma \ref{lemmalemma34}, the boundedness of $\mathbb{E}[\|\nabla F(\bar{x}_k)-p_k^i\|^2]$ can be derived in the next lemma.

\begin{remark}
	 Consider the  error $\epsilon_k=\bar{P}_k-\bar{y}_k$, which measures the difference incurred by using $y^i_{k}$ as the update direction instead of the correct yet unknown direction $\nabla F_i(\hat{x}_k^i)$ for each agent $i$. Lemma \ref{lemmalemma34}  suggests that $\mathbb{E}[\|\epsilon_k\|^2]$ decreases over iterations, that is, the noise of the stochastic gradient approximation diminishes as the number of iterations increases.
\end{remark}

\begin{remark}
	The conditions in Lemma \ref{lemmalemma34} can be guaranteed by choosing  $\gamma_k=A/(k+t_0)$ and $\eta_k=B/(k+t_0+1)$ according to Lemma \ref{lemmalemma1},  where $A>1$, $B\geqslant 0$ and $t_0$ is a constant. Typical choices are $\gamma_k=2/(k+1)$ and $\eta_k=2/(k+2)$. 
\end{remark}

\begin{lemma}\label{lem1}
Suppose Assumptions \ref{ass1}-\ref{assup5} hold. Take the step sizes $\gamma_k=\frac{2}{k+1}$ and $\eta_k=\frac{2}{k+2}$. Then, for any $i\in\mathcal{N}$ and $k\geqslant 1$, we have
	\begin{align}\label{lemmaequal1}
		\mathbb{E}[\|\nabla F(\bar{x}_k)-p_k^i\|^2]
		\leqslant\frac{12L^2C^2_1+3C_3+12C_2}{k+2}.
	\end{align}
%	where $C_3=\mathrm{max}\{2\|\bar{P}_2-\bar{y}_2\|^2, 24L^2(D+2C_1)^2+12\delta^2\}$, $C_2=4n^2(12L^2(D+2C_1)^2+12(G^2+\hat{\psi}))$,  $\hat{\psi}=\mathop{\mathrm{max}}\limits_{i\in\mathcal{N}}\{\|y_2^i\|^2,4L(D+2C_1)\psi+4G\psi+8G^2+8L^2(D+2C_1)^2\}$,  $\psi=\mathop{\mathrm{max}}\limits_{i\in\mathcal{N}}\{\|y_2^i\|,2G+2L(D+2C_1)\}$ and $C_1=2\sqrt{n}D$.
\end{lemma}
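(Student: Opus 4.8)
The plan is to bound $\nabla F(\bar{x}_k)-p_k^i$ by splitting it into three pieces, each of which is already controlled by one of the preceding lemmas. Recalling that $F=\frac{1}{n}\sum_i F_i$, so that $\nabla F(\bar{x}_k)=\frac{1}{n}\sum_i\nabla F_i(\bar{x}_k)$, and using the auxiliary averages $\bar{P}_k$ and $\bar{y}_k$, I would write the decomposition
$$\nabla F(\bar{x}_k)-p_k^i=\big(\nabla F(\bar{x}_k)-\bar{P}_k\big)+\big(\bar{P}_k-\bar{y}_k\big)+\big(\bar{y}_k-p_k^i\big),$$
apply the elementary inequality $\|a+b+c\|^2\leqslant 3(\|a\|^2+\|b\|^2+\|c\|^2)$, and then take expectations, reducing the task to bounding three separate terms.

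For the first term I would use the convexity of $\|\cdot\|^2$ (Jensen's inequality) to pull the average outside, $\|\nabla F(\bar{x}_k)-\bar{P}_k\|^2\leqslant\frac{1}{n}\sum_i\|\nabla F_i(\bar{x}_k)-\nabla F_i(\hat{x}_k^i)\|^2$, then invoke the $L$-smoothness of each $F_i$ (Assumption \ref{ass2}) to obtain $\leqslant\frac{L^2}{n}\sum_i\|\bar{x}_k-\hat{x}_k^i\|^2$, and finally substitute the consensus bound $\|\hat{x}_k^i-\bar{x}_k\|\leqslant\frac{2C_1}{k+2}$ from Lemma \ref{lemmama1}. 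This gives the (deterministic, hence in-expectation) estimate $\mathbb{E}[\|\nabla F(\bar{x}_k)-\bar{P}_k\|^2]\leqslant\frac{4L^2C_1^2}{(k+2)^2}$. The second term is handled directly by Lemma \ref{lemmalemma34}(b), which yields $\mathbb{E}[\|\bar{P}_k-\bar{y}_k\|^2]\leqslant\frac{C_3}{k+2}$, and the third term by Lemma \ref{lemma45}, which yields $\mathbb{E}[\|\bar{y}_k-p_k^i\|^2]\leqslant\frac{4C_2}{(k+2)^2}$.

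Combining the three bounds through the factor-of-three inequality, and noting that $\frac{1}{(k+2)^2}\leqslant\frac{1}{k+2}$ for all $k\geqslant 1$ (since $k+2\geqslant 1$), so that every term can be placed over the common denominator $k+2$, I would collect
$$\mathbb{E}[\|\nabla F(\bar{x}_k)-p_k^i\|^2]\leqslant 3\Big(\frac{4L^2C_1^2}{(k+2)^2}+\frac{C_3}{k+2}+\frac{4C_2}{(k+2)^2}\Big)\leqslant\frac{12L^2C_1^2+3C_3+12C_2}{k+2},$$
which is exactly the claimed bound.

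Since each of the three constituent estimates is furnished verbatim by a previously established lemma, there is no genuine obstacle here; the argument is essentially an assembly step. The only point requiring a little care is the treatment of the first term: one must recognize that the gap between $\nabla F(\bar{x}_k)$ and $\bar{P}_k$ is precisely a consensus error — each summand compares $\nabla F_i$ evaluated at the true average $\bar{x}_k$ against its value at the locally mixed iterate $\hat{x}_k^i$ — so that $L$-smoothness converts it into the quantity controlled by Lemma \ref{lemmama1}. One should also verify that the $\frac{1}{(k+2)^2}$ terms may legitimately be relaxed to $\frac{1}{k+2}$ before summing, which is what allows all three pieces to merge into a single $\mathcal{O}(1/k)$ rate.
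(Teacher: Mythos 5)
Your proposal is correct and follows essentially the same route as the paper's own proof: the identical three-term decomposition $\nabla F(\bar{x}_k)-p_k^i=(\nabla F(\bar{x}_k)-\bar{P}_k)+(\bar{P}_k-\bar{y}_k)+(\bar{y}_k-p_k^i)$, the factor-of-three inequality, $L$-smoothness plus Lemma \ref{lemmama1} for the first piece, Lemmas \ref{lemmalemma34}(b) and \ref{lemma45} for the other two, and the final relaxation $\frac{1}{(k+2)^2}\leqslant\frac{1}{k+2}$. No gaps.
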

The proof is presented in Appendix \ref{proofF1}. Making use of Lemma \ref{lem1}, the convergence rate of Algorithm \ref{alg:1} is established.

\begin{theorem}\label{the1}
	Suppose Assumptions \ref{ass1}-\ref{assup5} hold. The function $F$ is convex. Choose the step sizes $\gamma_k=\frac{2}{k+1}$ and $\eta_k=\frac{2}{k+2}$. Then, for any $k\geqslant 1$, it holds
	\begin{align}\label{the111}
	\mathbb{E}[F(\bar{x}_k)]-F(x^*)
	\leqslant\frac{C_4}{(k+3)^{\frac{1}{2}}},
\end{align}
where $C_4=\mathrm{max}\{\sqrt{3}(F(\bar{x}_1)-F(x^*)),2LD^2+2D\sqrt{12L^2C^2_1+3C_3+12C_2}\}$.
%$C_3=\mathrm{max}\{2\|\bar{P}_2-\bar{y}_2\|^2, 24L^2(D+2C_1)^2+12\delta^2\}$, $C_2=4n^2(12L^2(D+2C_1)^2+12(G^2+\hat{\psi}))$,  $\hat{\psi}=\mathop{\mathrm{max}}\limits_{i\in\mathcal{N}}\{\|y_2^i\|^2,4L(D+2C_1)\psi+4G\psi+8G^2+8L^2(D+2C_1)^2\}$,  $\psi=\mathop{\mathrm{max}}\limits_{i\in\mathcal{N}}\{\|y_2^i\|,2G+2L(D+2C_1)\}$ and $C_1=2\sqrt{n}D$.
\end{theorem}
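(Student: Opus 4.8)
The plan is to track the \emph{averaged} iterate $\bar{x}_k$ and show it drives down the optimality gap up to a controllable error. Note first that all iterates stay feasible: since $C$ is doubly stochastic, $\hat{x}_k^i=\sum_j c_{ij}x_k^j$ is a convex combination of points in $\mathcal{X}$, and \eqref{algstep5} writes $x_{k+1}^i=(1-\eta_k)\hat{x}_k^i+\eta_k\theta_k^i$ as a convex combination of $\hat{x}_k^i,\theta_k^i\in\mathcal{X}$, so $\bar{x}_k\in\mathcal{X}$ for all $k$. Exploiting double stochasticity again through the column-sum identity $\frac{1}{n}\sum_{i=1}^n\hat{x}_k^i=\bar{x}_k$, averaging \eqref{algstep5} over $i\in\mathcal{N}$ gives the clean recursion $\bar{x}_{k+1}=\bar{x}_k+\eta_k(\bar{\theta}_k-\bar{x}_k)$ with $\bar{\theta}_k:=\frac{1}{n}\sum_{i=1}^n\theta_k^i\in\mathcal{X}$. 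Applying the $L$-smoothness of $F$ (Assumption \ref{ass2}) to this step, I would obtain
\begin{align}
F(\bar{x}_{k+1})\leqslant F(\bar{x}_k)+\eta_k\langle\nabla F(\bar{x}_k),\bar{\theta}_k-\bar{x}_k\rangle+\tfrac{L\eta_k^2D^2}{2},\nonumber
\end{align}
where the quadratic term is bounded using Assumption \ref{assumption3}.

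The heart of the argument is bounding the linear term. Writing $\langle\nabla F(\bar{x}_k),\bar{\theta}_k-\bar{x}_k\rangle=\frac{1}{n}\sum_{i=1}^n\langle\nabla F(\bar{x}_k),\theta_k^i-\bar{x}_k\rangle$ and inserting $\pm p_k^i$, I would use the optimality of $\theta_k^i$ in \eqref{algstep4}, namely $\langle p_k^i,\theta_k^i\rangle\leqslant\langle p_k^i,x^*\rangle$, together with convexity of $F$ in the form $\langle\nabla F(\bar{x}_k),x^*-\bar{x}_k\rangle\leqslant F(x^*)-F(\bar{x}_k)$, to derive for each $i$
\begin{align}
\langle\nabla F(\bar{x}_k),\theta_k^i-\bar{x}_k\rangle\leqslant F(x^*)-F(\bar{x}_k)+2D\|\nabla F(\bar{x}_k)-p_k^i\|,\nonumber
\end{align}
the final term arising from two applications of Cauchy--Schwarz and the diameter bound $D$. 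Substituting back and using $\eta_k=\frac{2}{k+2}$ yields the one-step inequality
\begin{align}
F(\bar{x}_{k+1})-F(x^*)\leqslant(1-\eta_k)\big(F(\bar{x}_k)-F(x^*)\big)+\tfrac{2D\eta_k}{n}\sum_{i=1}^n\|\nabla F(\bar{x}_k)-p_k^i\|+\tfrac{L\eta_k^2D^2}{2}.\nonumber
\end{align}

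Finally I would take expectations and invoke Lemma \ref{lem1}: by Jensen's inequality $\mathbb{E}[\|\nabla F(\bar{x}_k)-p_k^i\|]\leqslant(\mathbb{E}[\|\nabla F(\bar{x}_k)-p_k^i\|^2])^{1/2}\leqslant\sqrt{M/(k+2)}$ with $M:=12L^2C_1^2+3C_3+12C_2$, so that, writing $a_k:=\mathbb{E}[F(\bar{x}_k)]-F(x^*)$, the gap obeys the scalar recursion
\begin{align}
a_{k+1}\leqslant\Big(1-\tfrac{2}{k+2}\Big)a_k+\tfrac{4D\sqrt{M}}{(k+2)^{3/2}}+\tfrac{2LD^2}{(k+2)^2}.\nonumber
\end{align}
The remaining task is to solve this recursion by induction, proving $a_k\leqslant C_4/(k+3)^{1/2}$. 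I expect \emph{this} to be the main obstacle: unlike classical deterministic Frank--Wolfe, the gradient-estimation error decays only as $(k+2)^{-1/2}$, so the additive perturbation is $\Theta((k+2)^{-3/2})$ rather than $\Theta((k+2)^{-2})$, and it is exactly the delicate comparison of the contraction slack $\big(1-\frac{2}{k+2}\big)(k+3)^{-1/2}$ against the target $(k+4)^{-1/2}$ that forces the $\mathcal{O}(k^{-1/2})$ rate and dictates taking $C_4$ as the maximum of an initialization term (closing the base case $k=1$) and the error-accumulation term $2LD^2+2D\sqrt{M}$ (closing the inductive step). Verifying that this single constant closes the induction for \emph{every} $k$ is the most technical part, and the auxiliary recursion bound (Lemma \ref{lemmalemma1}) would streamline the bookkeeping.
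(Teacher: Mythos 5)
Your proposal follows essentially the same route as the paper's proof: the averaged recursion $\bar{x}_{k+1}=(1-\eta_k)\bar{x}_k+\eta_k\bar{\theta}_k$ (the paper's Lemma \ref{lemma0}(b)), an $L$-smoothness descent step with the quadratic term bounded by $D^2$, the linear term controlled via the optimality of $\theta_k^i$ in \eqref{algstep4} plus convexity of $F$, then Jensen's inequality with Lemma \ref{lem1}, and finally Lemma \ref{lemmalemma1} (with $r_1=1$, $r_2=\tfrac{3}{2}$, $A=2$) to solve the resulting scalar recursion --- exactly the paper's steps, including your observation that $C_4$ splits into a base-case term and a $B/(A-1)$ term.

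One quantitative slack: by applying Cauchy--Schwarz twice you get the coefficient $2D\|\nabla F(\bar{x}_k)-p_k^i\|$, whereas the paper regroups the error as a single inner product $(\nabla F(\bar{x}_k)-p_k^i)^{\mathrm{T}}(\theta_k^i-x^*)$, bounded by $D\|\nabla F(\bar{x}_k)-p_k^i\|$. Consequently your recursion carries $\frac{4D\sqrt{M}}{(k+2)^{3/2}}$ instead of $\frac{2D\sqrt{M}}{(k+2)^{3/2}}$, and Lemma \ref{lemmalemma1} then yields the constant $\mathrm{max}\{\sqrt{3}(F(\bar{x}_1)-F(x^*)),\,2LD^2+4D\sqrt{M}\}$ rather than the stated $C_4$; the rate $\mathcal{O}(k^{-1/2})$ is unaffected, but to recover the theorem's exact constant you should adopt the paper's regrouping, after which your argument coincides with the published proof.
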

The proof is presented in Appendix \ref{proofH}.
\begin{remark}
	Theorem \ref{the1} implies that the  expected suboptimality  $\mathbb{E}[F(\bar{x}_k)]-F(x^*)$ of the iteration variables generated by DMFW converges to zero at least at a sublinear rate of $\mathcal{O}(k^{-\frac{1}{2}})$, coinciding with  that of MSHFW \cite{acc2021ZAkhtar}. By using the Markov inequality, we can also obtain that 
	$
	\mathbb{P}\{F(\bar x_k)-F(x^*)\geqslant C_4k^{-\alpha}\}\leqslant\frac{\mathbb{E}[F(\bar x_k)]-F(x^*)}{C_4k^{-\alpha}}\leqslant k^{\alpha}/(k+3)^{\frac{1}{2}}=\mathcal{O}(k^{\alpha-\frac{1}{2}})
$, where $0<\alpha<\frac{1}{2}$. That is, $\mathbb{P}\{F(\bar x_k)-F(x^*)<C_4k^{-\alpha}\}\geqslant1-k^{\alpha}/(k+3)^{\frac{1}{2}}$. Hence, $\mathbb{P}\{F(\bar x_k)-F(x^*)<C_4k^{-\alpha}\}=1$ when $k\rightarrow\infty$, which indicates that  $F(\bar x_k)$ converges to $F(x^*)$ with probability 1.
%	We can also use the sample complexity to explain the result, that is, the proposed DMFW requires $O(\epsilon^{-2})$ sample complexity to attain $(1-\epsilon)$ accuracy.
\end{remark}

\subsection{Convergence rate for nonconvex optimization}	
This subsection provides the convergence rate of the proposed DMFW for (\ref{ss1}) with nonconvex objective functions.
To show the convergence performance of DMFW for nonconvex case, we introduce the  FW-gap, which is defined as
\begin{align}\label{zx1}
	g_k=\mathop{\mathrm{max}}\limits_{x\in\mathcal{X}}\langle\nabla F(\bar{x}_k),\bar{x}_k-x\rangle.
\end{align}
According to \eqref{zx1}, the variable  $\bar{x}_k$  is a stationary point to \eqref{ss1} if $g_k=0$ \cite{tac2017HWai}. Hence, $g_k$ can be regarded as a measure of the stationarity of variable $\bar{x}_k$. Since the set $\mathcal{X}$ is compact, we assume that the set of stationary points $\mathcal{X}^*$ of \eqref{ss1} is nonempty and the function $F(x)$ is finite  on $\mathcal{X}^*$. The convergence result is presented in the following theorem. 	
\begin{theorem}\label{the2}
	Consider the DMFW algorithm.  Suppose Assumptions \ref{ass1}-\ref{assup5} hold. $F$ is possibly nonconvex. If $\gamma_k=\frac{2}{k+1}$ and $\eta_k=\frac{2}{k+2}$, the FW-gap satisfies
	\begin{align}
	\mathbb{E}\bigg[\mathop{\mathrm{min}}\limits_{k\in[1,K]}g_k\bigg]&\leqslant\frac{1}{\mathrm{log}_2(K)-1}\bigg(\mathbb{E}[F(\bar{x}_1)-F(x^*)]+4LD^2\nonumber\\
		&\quad+2D\beta\sqrt{12L^2C^2_1+3C_3+12C_2}\bigg),\nonumber
\end{align}
where $K=2^m$ ($m\in\mathbb{Z}_+$) and $\beta$ is a constant such that $\sum_{k=1}^{2^m}\frac{2}{(k+2)^{1.5}}\leqslant \beta$.
%  $C_3=\mathrm{max}\{2\|\bar{P}_2-\bar{y}_2\|^2, 24L^2(D+2C_1)^2+12\delta^2\}$, $C_2=4n^2(12L^2(D+2C_1)^2+12(G^2+\hat{\psi}))$,  $\hat{\psi}=\mathop{\mathrm{max}}\limits_{i\in\mathcal{N}}\{\|y_2^i\|^2,4L(D+2C_1)\psi+4G\psi+8G^2+8L^2(D+2C_1)^2\}$,  $\psi=\mathop{\mathrm{max}}\limits_{i\in\mathcal{N}}\{\|y_2^i\|,2G+2L(D+2C_1)\}$ and $C_1=2\sqrt{n}D$.
\end{theorem}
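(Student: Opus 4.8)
The plan is to bound the nonconvex FW-gap by leveraging the smoothness descent inequality together with the gradient-approximation error already controlled in Lemma~\ref{lem1}. First I would apply the $L$-smoothness of $F$ (Assumption~\ref{ass2}) to the global iterate $\bar{x}_{k+1}$. Since $\bar{x}_{k+1}=\frac{1}{n}\sum_i x_{k+1}^i$ and each $x_{k+1}^i=\hat{x}_k^i+\eta_k(\theta_k^i-\hat{x}_k^i)$, averaging over $i$ and using $\bar{x}_k=\frac{1}{n}\sum_i\hat{x}_k^i$ (valid because $C$ is doubly stochastic) gives $\bar{x}_{k+1}-\bar{x}_k=\eta_k(\bar{\theta}_k-\bar{x}_k)$, where $\bar{\theta}_k=\frac{1}{n}\sum_i\theta_k^i$. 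Plugging this into $F(\bar{x}_{k+1})\leqslant F(\bar{x}_k)+\langle\nabla F(\bar{x}_k),\bar{x}_{k+1}-\bar{x}_k\rangle+\frac{L}{2}\|\bar{x}_{k+1}-\bar{x}_k\|^2$ and using Assumption~\ref{assumption3} to bound $\|\bar{\theta}_k-\bar{x}_k\|\leqslant D$ yields
\begin{align}
F(\bar{x}_{k+1})\leqslant F(\bar{x}_k)+\eta_k\langle\nabla F(\bar{x}_k),\bar{\theta}_k-\bar{x}_k\rangle+\frac{L\eta_k^2D^2}{2}.\nonumber
\end{align}

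Next I would relate the inner product $\langle\nabla F(\bar{x}_k),\bar{\theta}_k-\bar{x}_k\rangle$ to the FW-gap $g_k$. The key difficulty is that each $\theta_k^i$ minimizes the correlation with the \emph{surrogate} direction $p_k^i$, not with the true gradient $\nabla F(\bar{x}_k)$. I would therefore write $\langle\nabla F(\bar{x}_k),\theta_k^i-\bar{x}_k\rangle=\langle p_k^i,\theta_k^i-\bar{x}_k\rangle+\langle\nabla F(\bar{x}_k)-p_k^i,\theta_k^i-\bar{x}_k\rangle$. For the first term, optimality of $\theta_k^i$ implies $\langle p_k^i,\theta_k^i\rangle\leqslant\langle p_k^i,x\rangle$ for every $x\in\mathcal{X}$, in particular for the maximizer defining $g_k$; this lets me upper-bound it by a corresponding surrogate gap. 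The second term I would control by Cauchy--Schwarz and Assumption~\ref{assumption3}, giving $\|\nabla F(\bar{x}_k)-p_k^i\|\cdot D$, whose expectation is handled by Lemma~\ref{lem1}: $\mathbb{E}[\|\nabla F(\bar{x}_k)-p_k^i\|]\leqslant\sqrt{\mathbb{E}[\|\nabla F(\bar{x}_k)-p_k^i\|^2]}\leqslant\sqrt{(12L^2C_1^2+3C_3+12C_2)/(k+2)}$ by Jensen. Averaging over $i$ and consolidating the error terms, I expect to arrive at an inequality of the form $\mathbb{E}[F(\bar{x}_{k+1})]\leqslant\mathbb{E}[F(\bar{x}_k)]-\eta_k\mathbb{E}[g_k]+\frac{L\eta_k^2D^2}{2}+2\eta_k D\sqrt{(12L^2C_1^2+3C_3+12C_2)/(k+2)}$.

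Then I would rearrange to isolate $\eta_k\mathbb{E}[g_k]$, sum the telescoping inequality from $k=1$ to $K$, and use $F(\bar{x}_{K+1})\geqslant F(x^*)$ to cancel the boundary term. With $\eta_k=\frac{2}{k+2}$, the step-size sum $\sum_{k=1}^{K}\eta_k=\sum_{k=1}^{K}\frac{2}{k+2}$ behaves like $2\ln K$; converting to $\log_2$ and tracking constants produces the denominator $\log_2(K)-1$ claimed in the statement. The quadratic term $\sum\frac{L\eta_k^2D^2}{2}$ contributes the finite constant $4LD^2$ (since $\sum_k\frac{2}{(k+2)^2}$ converges), and the error term $\sum_k\eta_k\cdot 2D\sqrt{\cdots/(k+2)}=\sum_k\frac{2}{(k+2)^{1.5}}\cdot 2D\sqrt{12L^2C_1^2+3C_3+12C_2}$ contributes the bounded quantity $2D\beta\sqrt{12L^2C_1^2+3C_3+12C_2}$, where $\beta$ absorbs the convergent sum $\sum_k\frac{2}{(k+2)^{1.5}}$. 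Dividing by $\sum_k\eta_k$ and bounding $\min_{k}g_k\leqslant$ the weighted average gives the final bound on $\mathbb{E}[\min_{k\in[1,K]}g_k]$.

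The main obstacle I anticipate is the careful bookkeeping in converting the surrogate optimality of $\theta_k^i$ into a genuine lower bound on the true FW-gap $g_k$ after averaging over agents: the consensus error $\|\hat{x}_k^i-\bar{x}_k\|$ from Lemma~\ref{lemmama1} and the gradient-tracking error from Lemma~\ref{lem1} both enter, and I must ensure each is absorbed into an $\mathcal{O}(\eta_k/\sqrt{k})$-summable term so that only the $\frac{1}{\log_2 K}$ rate survives. The logarithmic rate (as opposed to a polynomial one) is essentially forced by the harmonic-type growth of $\sum\eta_k$ in the denominator against merely bounded numerator sums, so matching the exact constants $4LD^2$ and $\beta$ will require precise tracking of the $\eta_k^2$ and $\eta_k(k+2)^{-1/2}$ series.
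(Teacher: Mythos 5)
Your proposal is correct and follows essentially the same route as the paper's own proof: the smoothness descent step with $\bar{x}_{k+1}-\bar{x}_k=\eta_k(\bar{\theta}_k-\bar{x}_k)$, the decomposition of $\langle\nabla F(\bar{x}_k),\theta_k^i-\bar{x}_k\rangle$ via the surrogate optimality of $\theta_k^i$ yielding the two error terms $2\eta_k D\|\nabla F(\bar{x}_k)-p_k^i\|$ controlled by Lemma~\ref{lem1} and Jensen, the telescoping sum with $F(\bar{x}_{K+1})\geqslant F(x^*)$, and the $p$-series bounds (giving $4LD^2$ and $2D\beta\sqrt{12L^2C_1^2+3C_3+12C_2}$) against the lower bound $\log_2(K)-1\leqslant\sum_{k=1}^{K}\eta_k$ combined with $\min_k g_k\cdot\sum_k\eta_k\leqslant\sum_k\eta_k g_k$. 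All constants and the logarithmic rate match the paper's argument.
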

The proof is presented in Appendix \ref{proofI}.
\begin{remark}
	Theorem \ref{the2} indicates that the convergence rate of the proposed DMFW is $\mathcal{O}(1/\mathrm{log}_2(k))$ for nonconvex objective functions. It is worth mentioning that the obtained result is novel compared with previous nonconvex stochastic FW studies, even in centralized setting. For instance, \cite{Allerton2016SJReddi} and \cite{ijna2021MWeber} proposed centralized stochastic FW algorithms for stochastic nonconvex problems by using minibatch method. The proposed DMFW only requires selecting a small fixed batch data (as small as 1) to compute the stochastic gradient to achieve convergence. In addition, the proposed DMFW relaxes the assumption of $Q$-Lipschitz continuity of $f_i(x,\xi^i)$  with respects to $x$, which is required in e.g. \cite{Allerton2016SJReddi} and \cite{ijna2021MWeber}.
%	 \cite{reff15} proposed a distributed FW algorithm for the deterministic setting of problem (\ref{ss1}), but this deterministic algorithm is not suitable for solving problem (\ref{ss1})  due to the non-vanishing variance of gradient approximation. The proposed DMFW can be fully applicable to the problem described in  \cite{reff15}, but uses less computation.
\end{remark}

\begin{remark}
	There are two challenges in the convergence analysis of the proposed algorithm. On one hand,  our method needs to deal with the consensus error among different agents compared with the solutions in \cite{panips2019ACutkosky} and \cite{acc2021ZAkhtar}. On the other hand, the introduced local gradient by using recursive momentum makes it more challenging to handle the consensus error compared with the method in \cite{AISTATS2019JXie}.
\end{remark}

%\begin{remark}
%	Proving escaping from  saddle points is difficult for nonconvex problems.  Thanks to the use of random noise, the algorithm  can effectively explore the local neighborhood around saddle points to a certain extent. Hence, many algorithms use stochastic gradient and empirically get reasonable results. In terms of DMFW, if not at a saddle point,  we can always follow the gradient and reduce the value of the function. If we are at a saddle point, due to the existence of noise from stochastic gradient oracle, there may be nonnegligible variance in each direction, which helps the algorithm escape from saddle points.
%\end{remark}

%\begin{remark}
%	Most of the current methods for solving (\ref{ss1}) are distributed projected SGD method  \cite{reff33,reff34}. To the best of our knowledge, this paper is the first work to propose a distributed stochastic projection-free algorithm for problem (\ref{ss1}). The results show that the proposed algorithm DMFW has the same convergence performance as distributed projected SGD for convex case. Compared with the  centralized stochastic projection-free algorithms proposed in \cite{reff13,reff14} and \cite{reff38}, the proposed algorithm not only ensures a better convergence rate for convex stochastic problems and comparable convergence rate for nonconvex ones, but also relaxes the assumption of the function $f_i(x,\xi)~(i\in\mathcal{N})$ is $Q$-Lipschitz continuous respects to $x$ ($\|\nabla f_i(x,\xi)\|\leqslant Q$) through several auxiliary Lemmas provided in Appendix \ref{proofA}.
%\end{remark}

\section{Numerical Tests}
\subsection{Binary classification}
In this subsection, several numerical experiments are provided on the binary classification  with an $l_2$-norm ball constraint (i.e., $\mathcal{X}=\{x~|~\|x\|\leqslant \frac{D}{2}\}$). We solve the problem with Algorithm \ref{alg:1} (DMFW) and compare it against SFW \cite{jmlr2020AMokhtari}, MSHFW \cite{acc2021ZAkhtar} and DeFW \cite{tac2017HWai} as baselines. We consider two cases where  objective functions are convex and nonconvex, respectively. We use three different public  datasets, which are summarized in Table \ref{t2}.  In the experiment for SFW, MSHFW and DMFW, we compute a stochastic gradient by using $1\%$ of data per iteration; in the experiment for deterministic algorithm DeFW, we  use full data to compute the gradient. Distributed algorithms DeFW and DMFW are applied over a connected network $\mathcal{G}$ of 5 agents with a doubly stochastic adjacency matrix $C$ to solve the problem. The communication topology is demonstrated in Fig. 1.

\subsubsection{Binary classification with convex objective functions}\label{cas1}
We consider a popular logistic regression for binary classification with convex objective functions as follows:
\begin{align}
	\mathop{\mathrm{min}}\limits_{x\in\mathcal{X}}F(x)&=\frac{1}{n}\sum_{i=1}^nF_i(x),\nonumber\\
	F_i(x)&=\frac{1}{m_i}\sum_{i=1}^{m_i}\mathrm{ln}[1+\mathrm{exp}(-b_i\langle a_i,x\rangle)],\nonumber
\end{align}
where $(a_i,b_i)$ represents the (feature, label) pair of datum $i$, $m_i$ is the total number of training samples of agent $i$, and $n$ is the total number of agents over network $\mathcal{G}$. Here, we set $n=5$ for DeFW and DMFW. Note that SFW and MSHFW are centralized algorithms, so $n=1$ for these two algorithms.  As benchmarks, we choose SFW with step sizes $\gamma_k=2/(k+8)$ and $\rho_k=4/(k+8)^{\frac{2}{3}}$  in \cite{jmlr2020AMokhtari}, MSHFW with step sizes $\gamma_k=2/(k+1)$ and $\eta_k=2/(k+2)$  in \cite{acc2021ZAkhtar}, DeFW with step size $\gamma_k=2/(k+1)$ in \cite{tac2017HWai}. The step sizes of the algorithm DMFW are $\gamma_k=2/(k+1)$ and $\eta_k=2/(k+2)$. The constraint is selected as $\mathcal{X}=\{x~|~\|x\|\leqslant 5\}$.

%On dataset \textit{covtype.binary},  we can get the trajectories of $\|x_i\|$ and $\|s_i\|$ in Algorithm \ref{alg:1} by using MATLAB, as in Fig. 1 shows. With the increase of the number of iterations, the norm of the state $x_i$ for all $i\in\{1,2,\cdots,10\}$ will tend to be consistent, that is, the states of all agents will tend to the optimal solution, and the norm of the aggregation gradient $s_i$ for all $i\in\{1,2,\cdots,10\}$ will tend to 0.

As described in Lemma \ref{lemmalemma34}, the proposed algorithm can attenuate the noise in gradient approximation, i.e., $\mathbb{E}[\|\bar{P}_k-\bar{y}_k\|^2]=\mathcal{O}(1/k)$. This is verified by our simulation result in Fig. 1. It can be observed that $\|\bar{P}_k-\bar{y}_k\|^2$ generally decreases with the increase of iterations, which is consistent with our theoretical analysis.

We evaluate the algorithm in terms of the FW-gap, which is defined as
\begin{align}
		g_k=&\mathop{\mathrm{max}}\limits_{x\in\mathcal{X}}\langle\nabla F(\bar{x}_k),\bar{x}_k-x\rangle.\nonumber
\end{align}
For different datasets, Fig. 2 shows the FW-gap of  SFW, MSHFW, DeFW and DMFW.  It is shown that DMFW has comparable convergence performance compared with the distributed deterministic algorithm DeFW, although DMFW uses less data. This implies that the local gradient estimate $y^i_k$ in DMFW may be a better candidate for approximating the gradient $\nabla F_i(x)$ comparing to the unbiased gradient estimate $\nabla f_i(x,\xi^i)$. Comparing stochastic methods (MSHFW, SFW and DMFW),  DMFW and MSHFW  outperform SFW, especially on dataset \textit{w8a}, which is consistent with the theoretical result.

\begin{remark}
	 Stochastic algorithms only require a certain amount of data to be obtained randomly at each time to achieve convergence, unlike deterministic algorithms, which need to obtain the whole data at one time prior to the start of the algorithms.   Hence, finite-sum problem can also be solved by stochastic algorithms. In Test A, the random variable $\xi^i$  denotes  training examples obtained randomly by agent $i$ at each time, we use $1\%$ of data to estimate a stochastic gradient $\nabla f_i(x,\xi^i)$ for stochastic algorithms at each iteration, not the full gradient in  finite-sum setting. 
\end{remark}

% It shows that the proposed DMFW has comparable convergence performance to distributed deterministic algorithm DeFW although DMFW uses less data.  Comparing stochastic methods MSHFW, SFW and DMFW,  DMFW and MSHFW  outperform SFW, especially on dataset \textit{w8a}, which is consistent with the theoretical result.
% 
 
%the proposed algorithm DMFW has the same order of convergence rate as centralized algorithms MFW and SFW, which is consistent with our theoretical analysis. In the meanwhile, DMFW and SFW shows milder oscillations than MFW.

%the proposed algorithm shows milder oscillation and can converge to a lower stability level faster, especially on dataset \textit{covtype.binary}. Fig. 1 demonstrates that DMFW performs better than DeFW although it uses less data.
\begin{table}
 \begin{center}
\caption{REAL DATA FOR BLACK-BOX BINARY CLASSIFICATION}\label{t2}
\begin{tabular}{c|c|c|c}
\hline
datasets& \#samples & \#features & \#classes\\\hline
\textit{covtype.binary} & 581012 & 54 & 2\\\hline
\textit{a9a} & 32561 & 123 & 2\\\hline
\textit{w8a} & 64700 & 300 &2\\
\hline
\end{tabular}
\end{center}
\end{table}	

\begin{figure}[!t]
\centering
\includegraphics[width=2.5in]{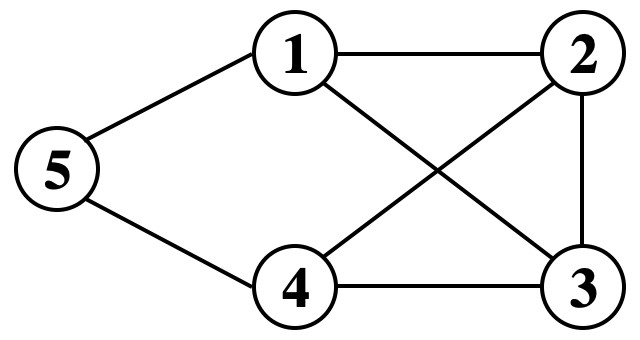}%
\caption{Multi-agent communication topology.}
\label{fig_sim}
\end{figure}

 \begin{figure}[!t]
\centering
\includegraphics[width=2.5in]{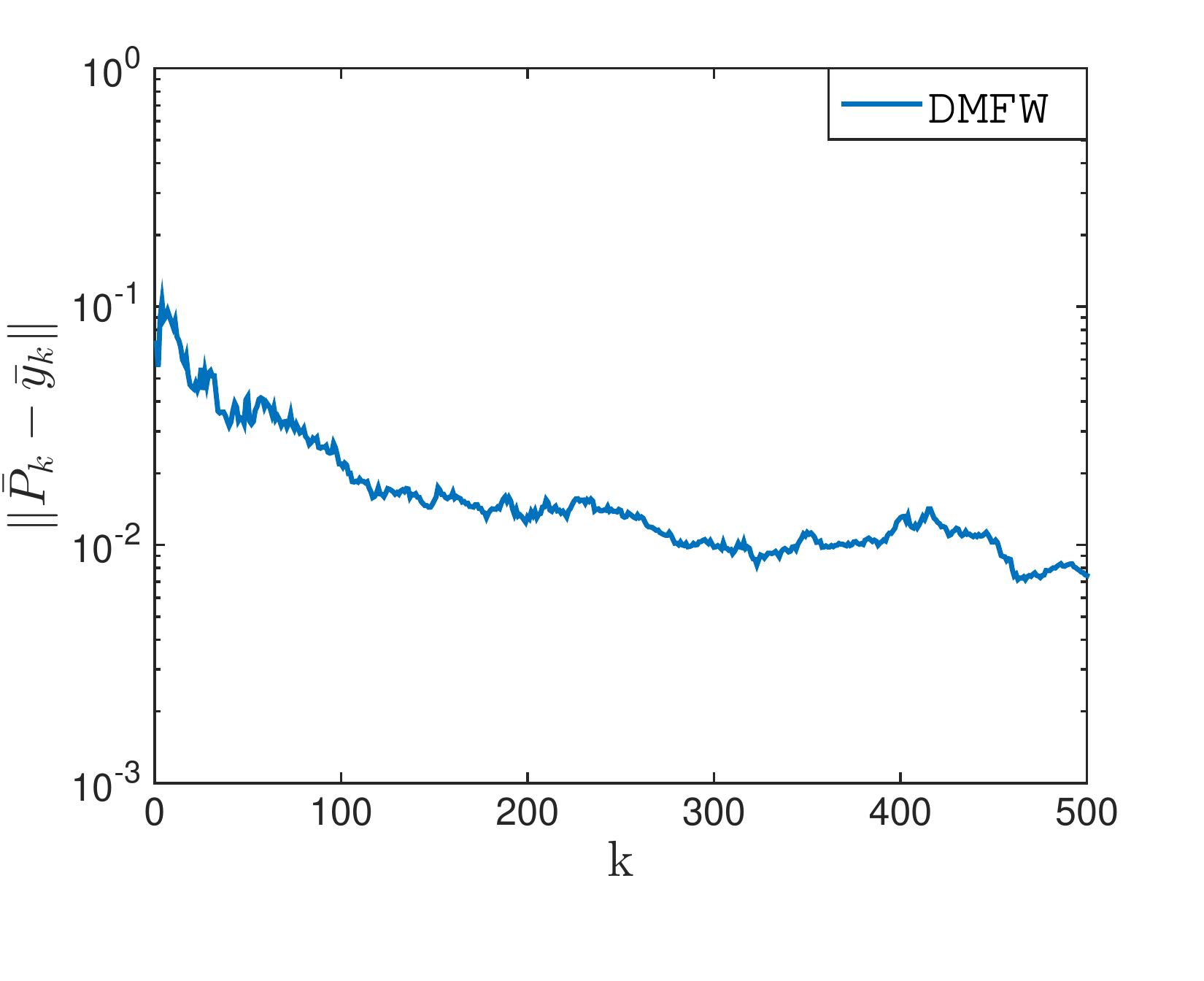}%
\caption{The error $\|\bar{P}_k-\bar{y}_k\|$ of DMFW on $a9a$ dataset.}
\label{fig_sim}
\end{figure}

%This is because the \textit{momentum} step of our Algorithm \ref{alg:1} depends not only on the current gradient, but also on the past gradient information, which can alleviate zigzagging phenomenon and improve the performance of distributed FW.
%\begin{figure*}[!t]
%\centering
%\subfloat[]{\includegraphics[width=2.5in]{x_cov.eps}%
%\label{fig_first_case}}
%\hfil
%\subfloat[]{\includegraphics[width=2.5in]{s_cov.eps}%
%\label{fig_second_case}}
%\caption{The trajectories of $x_i$ and $s_i$ in Algorithm \ref{alg:1} for convex optimization on dataset \textit{covtype.binary}. (a) $\|x_i\|,~i=1,2,\cdots,n$. (b) $\|s_i\|,~i=1,2,\cdots,n$.}
%\label{fig_sim}
%\end{figure*}

\begin{figure*}[!t]
\centering
\subfloat[]{\includegraphics[width=2in]{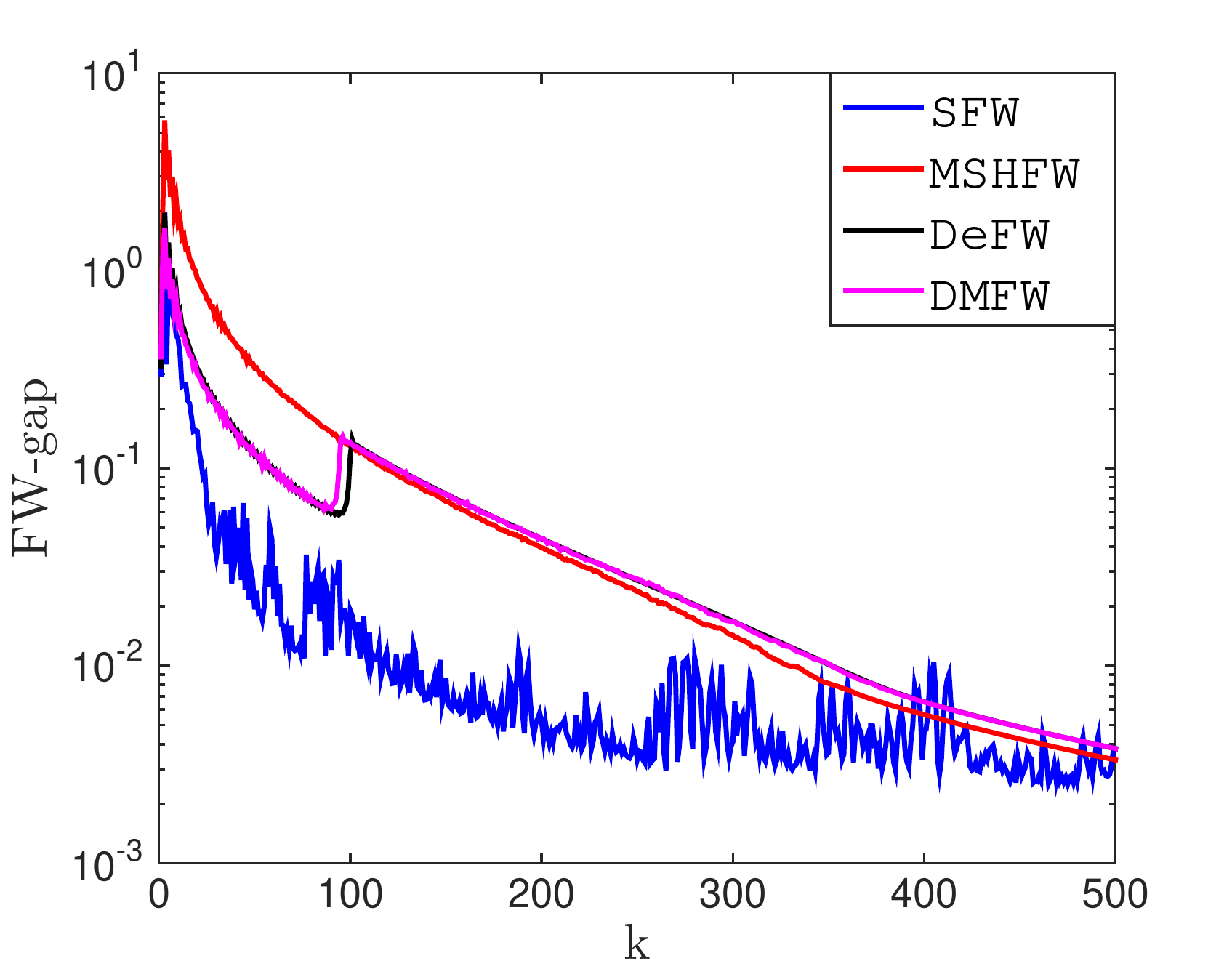}%
\label{fig2_forth_case}}
\hfil
\subfloat[]{\includegraphics[width=2in]{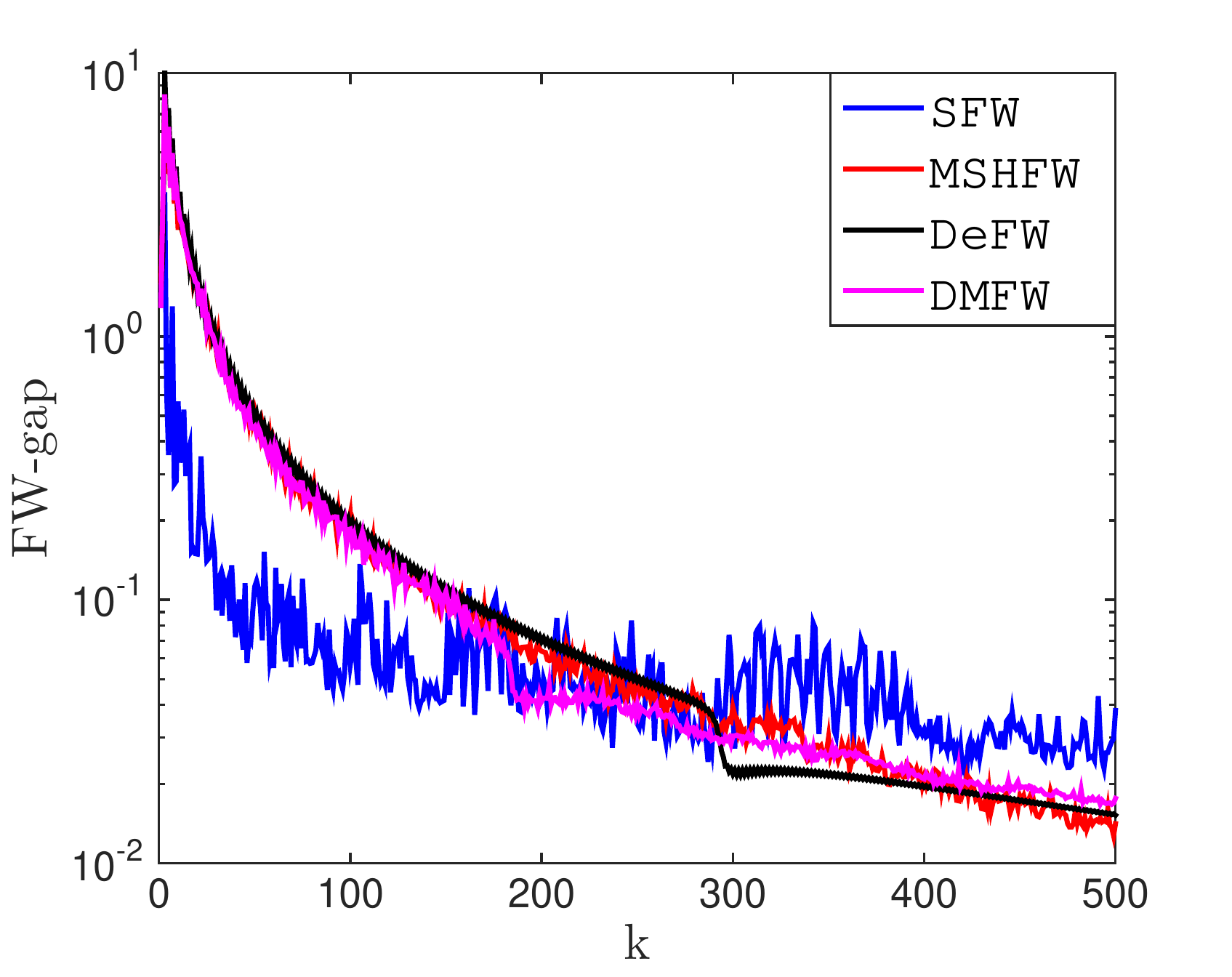}%
\label{fig2_fifth_case}}
\hfil
\subfloat[]{\includegraphics[width=2in]{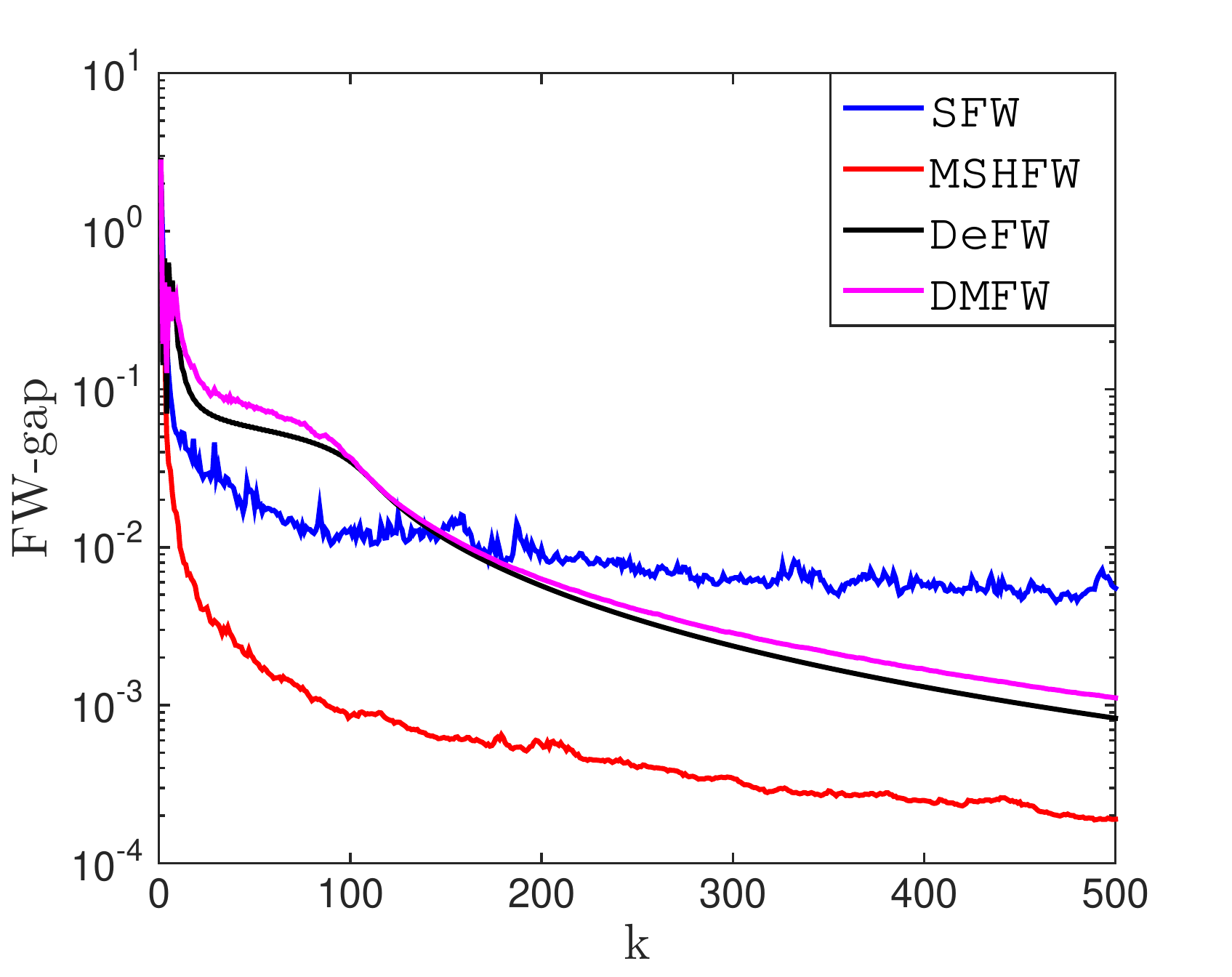}%
\label{fig2_sixth_case}}
\caption{The comparison between SFW, MSHFW, DeFW and DMFW on three datasets. (a) \textit{covtype.binary} dataset. (b) \textit{a9a} dataset. (c) \textit{w8a} dataset.}
\label{fig_sim}
\end{figure*}

\subsubsection{Binary Classification with Nonconvex Objective Functions} We
consider a binary classification with nonconvex objective function as follows:
\begin{align}\label{ss3}
	\mathop{\mathrm{min}}\limits_{x\in\mathcal{X}}~F(x)&=\frac{1}{n}\sum_{i=1}^nF_i(x),\nonumber\\
	F_i(x)&=\frac{1}{m_i}\sum_{i=1}^{m_i}\frac{1}{1+\mathrm{exp}(b_i\langle a_i,x\rangle)}+\lambda_1\|x\|^2
\end{align}
where $(a_i,b_i)$, $m_i$, $n$ and the constraint $\mathcal{X}$ have the same definitions as those in \ref{cas1}, and $\lambda_1=5\times 10^{-6}$. It is obvious that the objective function $F$ is a nonconvex function. As benchmarks, we choose DeFW with step size $\gamma_k=\frac{1}{k^{0.5}}$ as mentioned in \cite{tac2017HWai}.  Algorithms SFW and MSHFW use the same step sizes in Section \ref{cas1}. The step sizes of the algorithm DMFW are $\gamma_k=\frac{2}{k+1}$ and $\eta_k=\frac{2}{k+2}$. Note that SFW and MSHFW are not proved to converge for the nonconvex problem (\ref{ss3}). We implement these two algorithms only for comparison purpose.

%The convergence plots for this experiment of the proposed method DMFW are shown in Fig.3. It can be seen that the norm of the state of $x_i~(i=1,2,\cdots,n)$ is gradually consistent with other agents  and $\|s_i\|~(i=1,2,\cdots,n)$ converges to zero with the increase of the number of iterations.
Fig. 3 shows the FW-gap of SFW, MSHFW, DeFW and DMFW for solving nonconvex problem \eqref{ss3}. From the results, it can be observed that stochastic algorithms (SFW, DMFW and MSHFW)  perform better compared to deterministic algorithm DeFW in all tested datasets.  This indicates that the stochastic FW algorithms are more efficient than the deterministic FW algorithms in solving nonconvex problems \eqref{ss3}. Comparing DMFW with centralized algorithms MSHFW and SFW, DMFW slightly outperforms SFW, especially in datasets $a9a$, but is slower than MSHFW.
 
% that DMFW exhibits better performance compared with DeFW in all tested datasets. Comparing DMFW with centralized algorithms MSHFW and SFW, DMFW slightly outperforms SFW, especially in datasets \textit{a9a}, but is slower than MSHFW.
%\begin{figure*}[!t]
%\centering
%\subfloat[]{\includegraphics[width=2.5in]{non_x_cov.eps}%
%\label{fig_first_case}}
%\hfil
%\subfloat[]{\includegraphics[width=2.5in]{non_s_cov.eps}%
%\label{fig_second_case}}
%\caption{The trajectories of $x_i$ and $s_i$ in Algorithm \ref{alg:1} for nonconvex optimization on dataset \textit{covtype.binary}. (a) $x_i,~i=1,2,\cdots,n$. (b) $s_i,~i=1,2,\cdots,n$.}
%\label{fig_sim}
%\end{figure*}

\begin{figure*}[!t]
\centering
\subfloat[]{\includegraphics[width=2in]{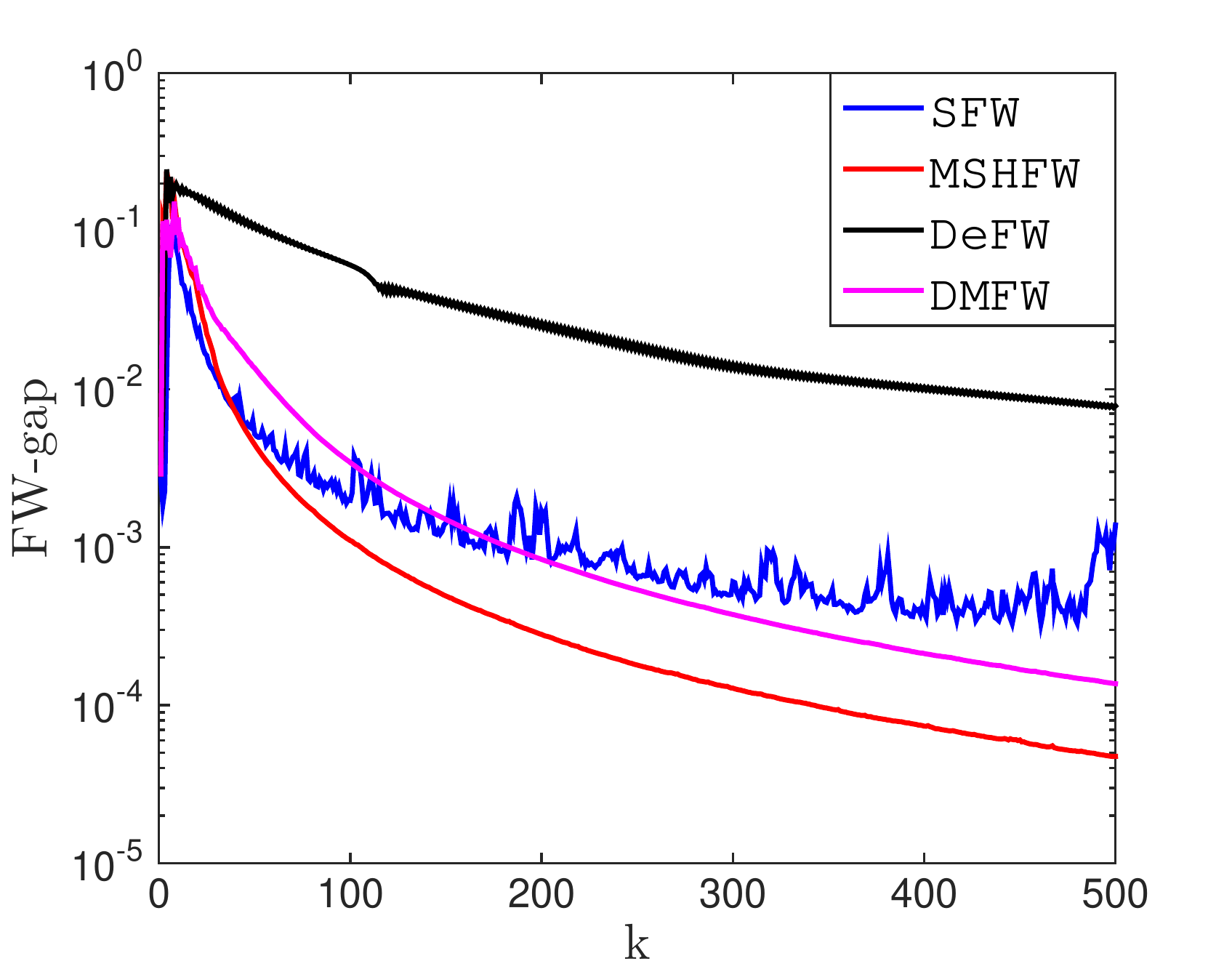}%
\label{fig4_forth_case}}
\hfil
\subfloat[]{\includegraphics[width=2in]{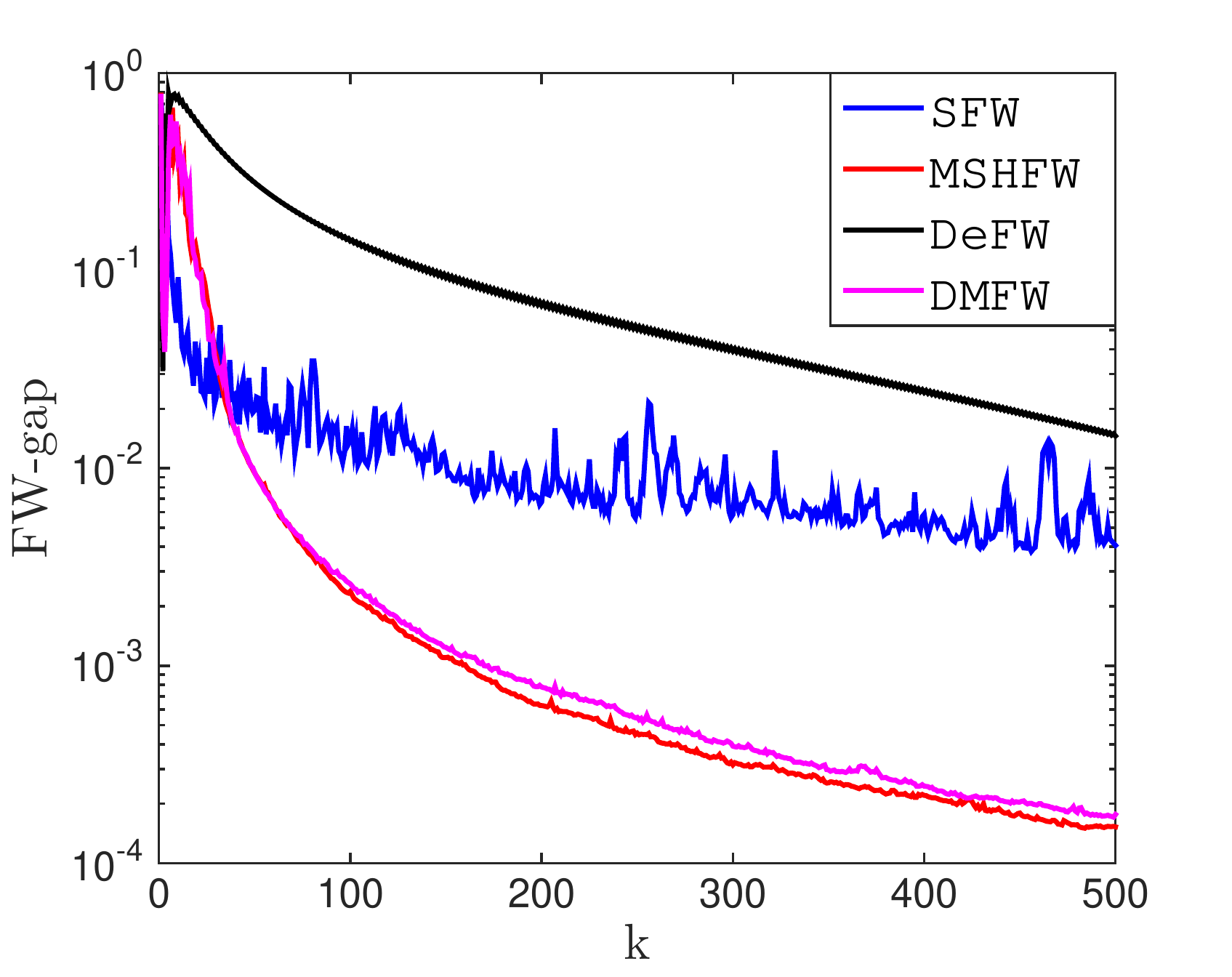}%
\label{fig4_fifth_case}}
\hfil
\subfloat[]{\includegraphics[width=2in]{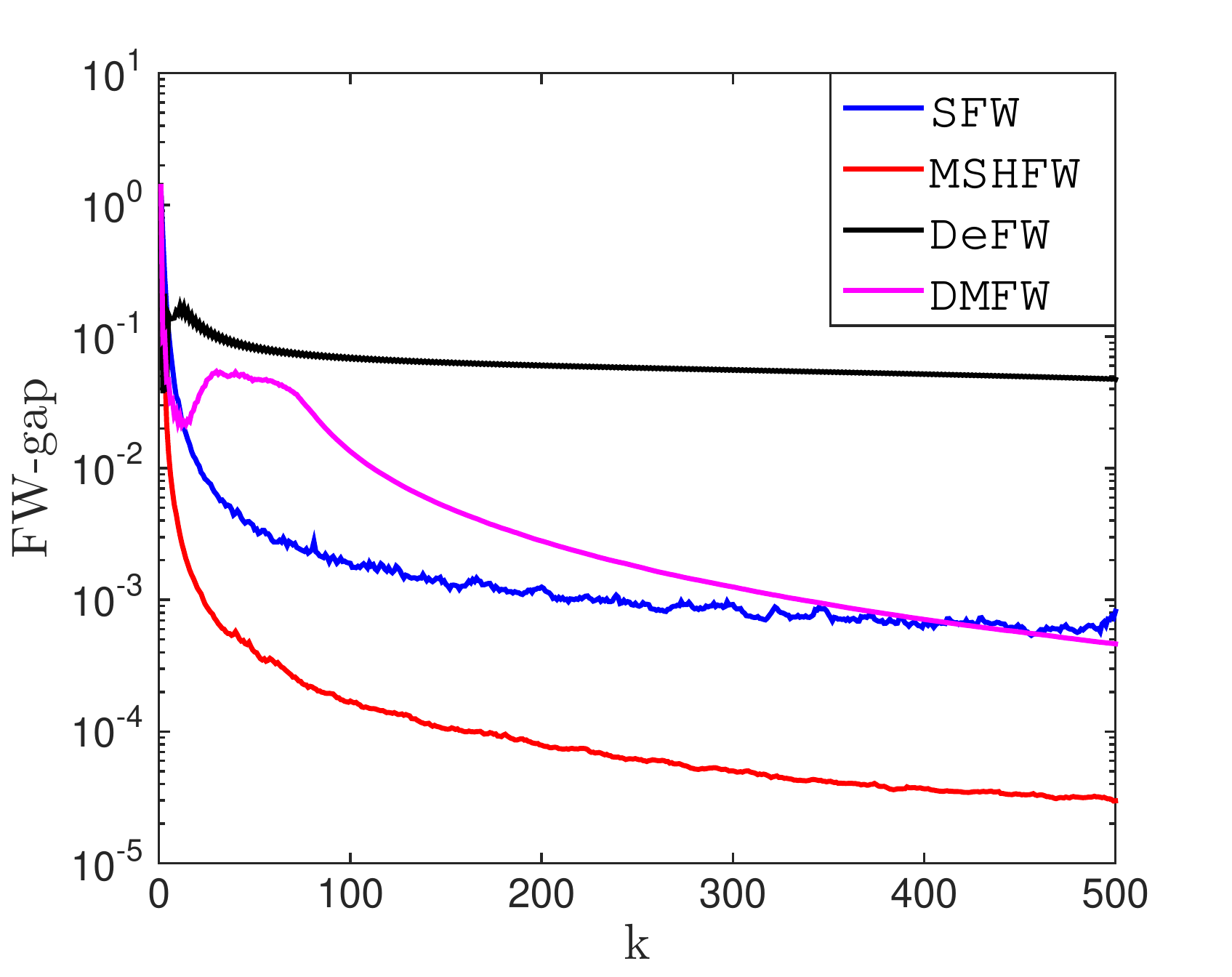}%
\label{fig4_sixth_case}}
\caption{The comparison between SFW, MSHFW, DeFW and DMFW on three datasets. (a) \textit{covtype.binary} dataset. (b) \textit{a9a} dataset. (c) \textit{w8a} dataset.}
\label{fig_sim}
\end{figure*}

\begin{figure*}[!t]
\centering
\subfloat[]{\includegraphics[width=2in]{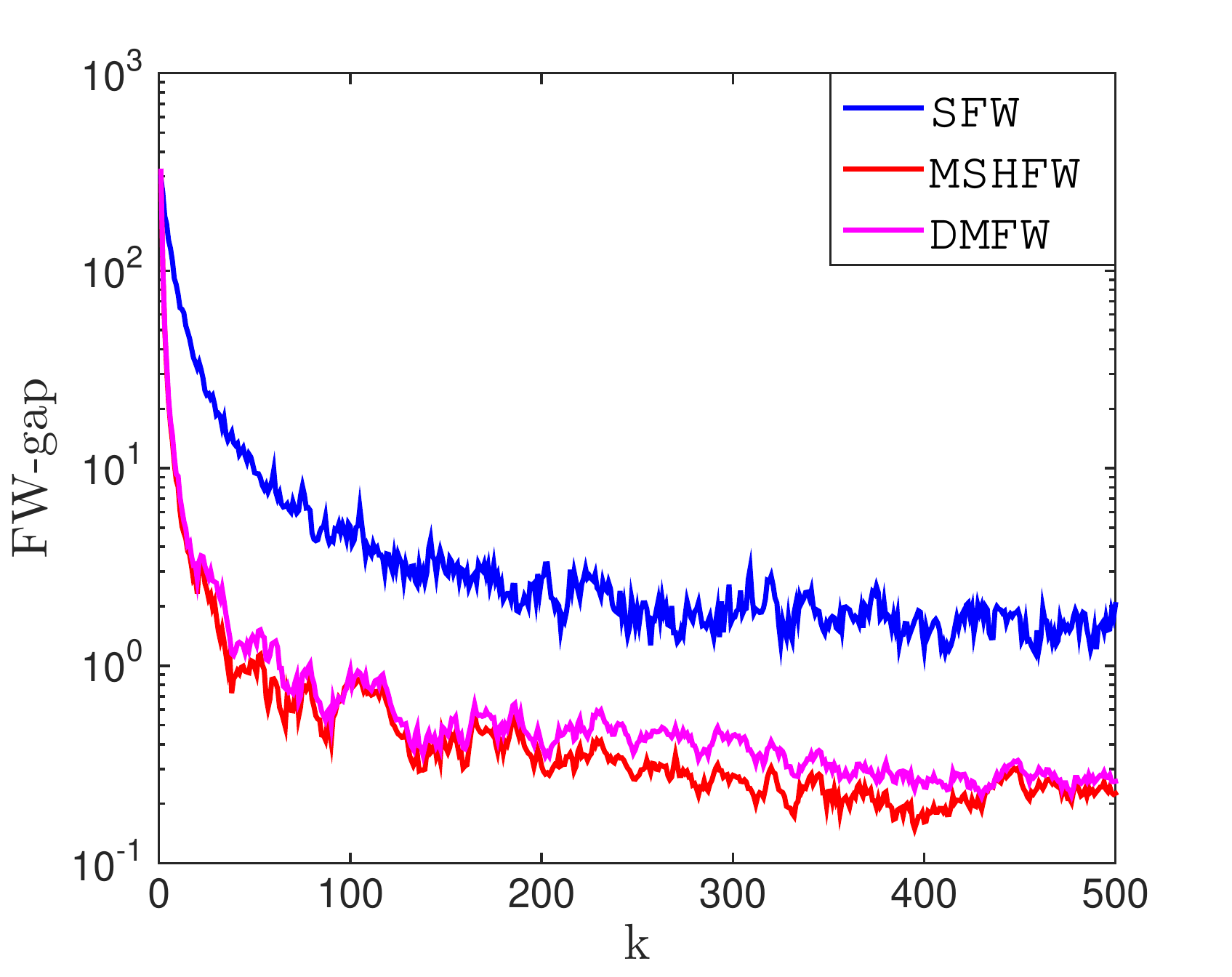}%
\label{fig4_forth_case}}
\hfil
\subfloat[]{\includegraphics[width=2in]{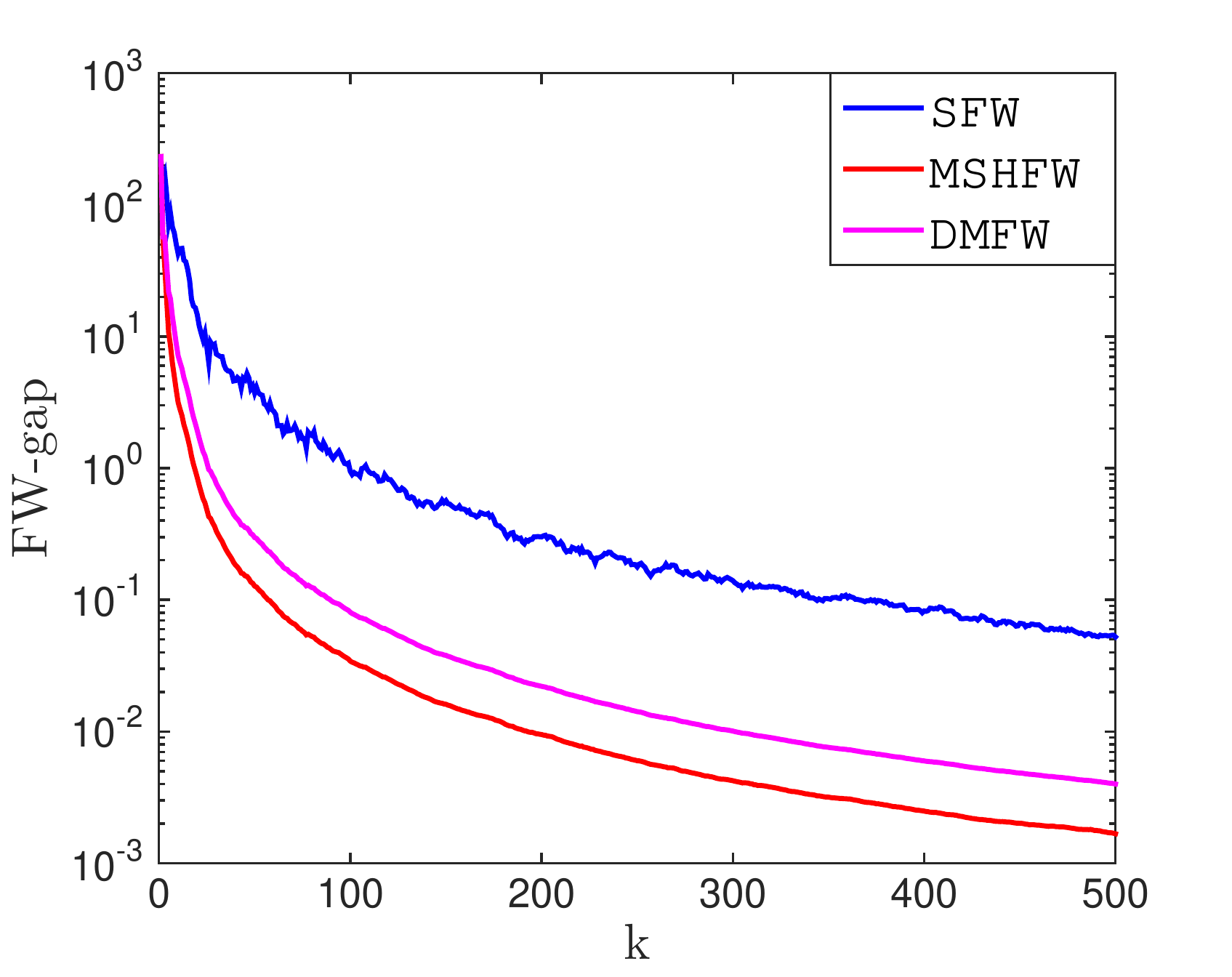}%
\label{fig4_fifth_case}}
\hfil
\subfloat[]{\includegraphics[width=2in]{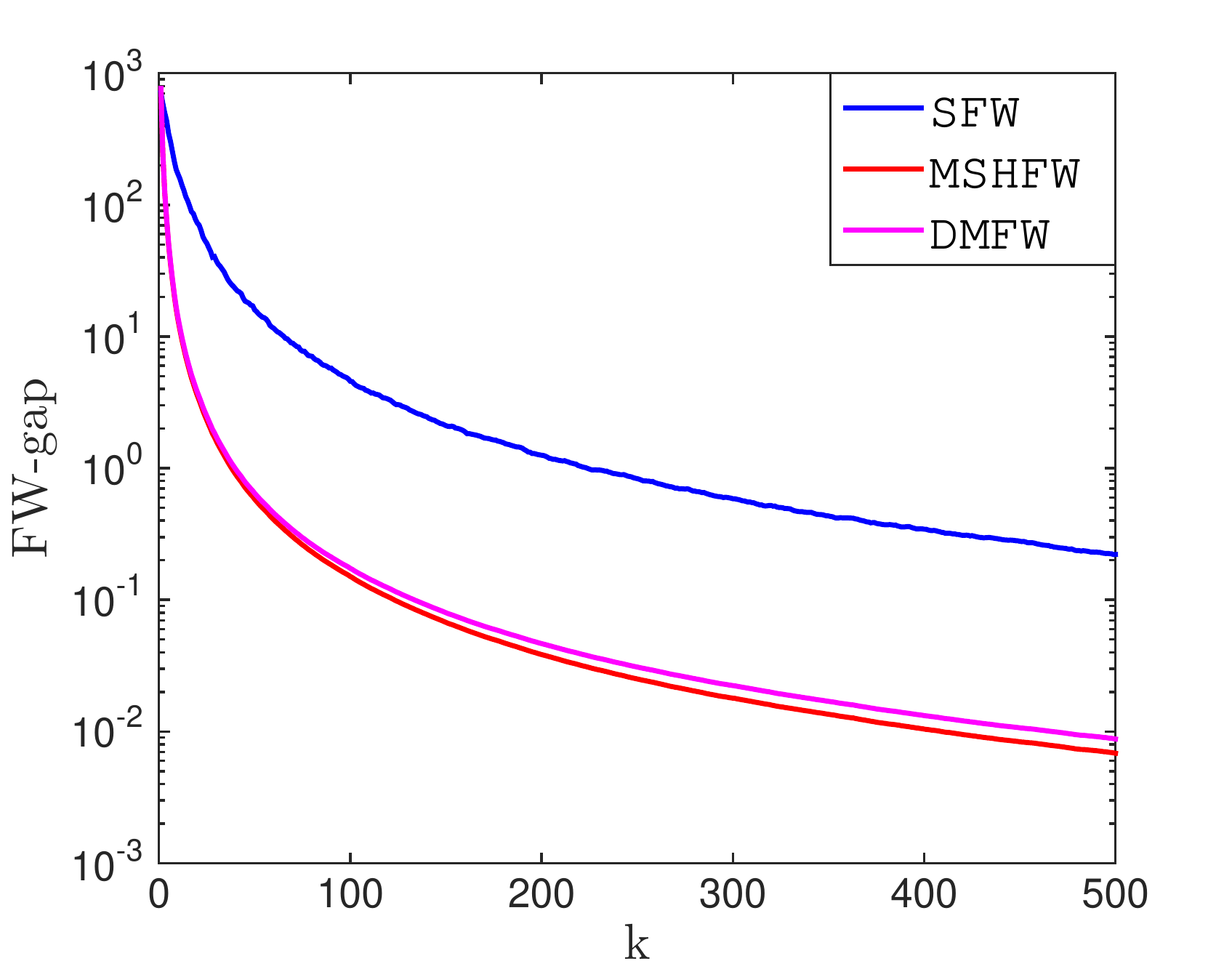}%
\label{fig4_sixth_case}}
\caption{The comparison between SFW, MSHFW and DMFW. (a) $l_1$ norm ball constraint. (b) $l_2$ norm ball constraint. (c) $l_{\frac{5}{4}}$ norm ball constraint.}
\label{fig_sim}
\end{figure*}

\subsection{Stochastic ridge regression}
In this subsection, several numerical experiments are conducted for the stochastic ridge regression. The constraint sets considered include $l_1$-, $l_2$- and $l_{\frac{5}{4}}$- norm balls. We solve the problem with Algorithm \ref{alg:1} (DMFW) and compare it against SFW \cite{jmlr2020AMokhtari} and MSHFW \cite{acc2021ZAkhtar} as baselines.  DMFW is applied over a connected network $\mathcal{G}$ of $n$ agents with a doubly stochastic adjacency matrix $C$ to solve the problem. The stochastic ridge regression is as follows:
\begin{align}
	\mathop{\mathrm{min}}\limits_{x\in\mathcal{X}}F(x)&=\frac{1}{n}\sum_{i=1}^nF_i(x),\nonumber\\
	F_i(x)&=\mathbb{E}_{a_i,b_i}[(a_i^Tx-b_i)^2+\lambda_1\|x\|^2],\nonumber
\end{align}
where $\lambda_1=5\times10^{-6}$ is a penalty parameter,  $n$ is the total number of agents over network $\mathcal{G}$, $(a_i,b_i)$ represents the (feature, label) pair of datum $i$. We assume that each $a_i\in[0.3,0.4]^p$ is uniformly distributed, and $b_i$ is chosen according to $b_i=a_i^Tz_i+\sigma_i$, where $z_i$ is a predefined parameter evenly distributed in $[0,10]^p$, and $\sigma_i$ is independent Gaussian noise with mean value of 0 and variance of 1. Given a pair $(a_i,b_i)$, agent $i$ can compute an estimated gradient of $F_i(x)$: $\nabla f_i(x,a_i,b_i)=2(a^T_ix-b_i)a_i+2\lambda_1x$. Choose $p=50$ and $n=50$. In the experiments, three constraint sets $\{x~|~\|x\|_1\leqslant 5\}$, $\{x~|~\|x\|\leqslant 5\}$ and $\{x~|~\|x\|_{\frac{5}{4}}\leqslant 5\}$ are considered. As benchmarks, we choose SFW with step sizes $\gamma_k=2/(k+8)$ and $\rho_k=4/(k+8)^{\frac{2}{3}}$ as mentioned in \cite{jmlr2020AMokhtari}, MSHFW with step sizes $\gamma_k=2/(k+1)$ and $\eta_k=2/(k+2)$ as mentioned in \cite{acc2021ZAkhtar}. The step sizes of the algorithm DMFW are $\gamma_k=2/(k+1)$ and $\eta_k=2/(k+2)$.

Fig. 4 shows the convergence performances of stochastic algorithms SFW, MSHFW and DMFW with the same parameters. In the simulation, SFW converges slower than DMFW and MSHFW in all cases, which is consistent with the theoretical results. DMFW performs comparable convergence performance to centralized algorithm MSHFW when the constraint set are $l_2$- and $l_{\frac{5}{4}}$- norm balls.
%
% we compare the convergence performances of stochastic algorithms SFW, MSHFW and DMFW with the same parameters. It can be seen that DMFW is slower than MSHFW in convergence speed when the problem dimension is low. SFW is worse than DMFW and MSHFW in their final error bounds and convergence speeds, which is consistent with the theoretical results. The FW-gap increases with the network size expanding, while DMFW performs comparable convergence speeds as well as the ultimate error bounds to centralized algorithm MSHFW when the problem dimension $p$ is expanded to $50$. 

%\begin{figure*}[!t]
%\centering
%\subfloat[]{\includegraphics[width=2in]{compare_online_nonconvex_10_100.eps}%
%\label{fig4_forth_case}}
%\hfil
%\subfloat[]{\includegraphics[width=2in]{compare_online_nonconvex_25_100.eps}%
%\label{fig4_fifth_case}}
%\hfil
%\subfloat[]{\includegraphics[width=2in]{compare_online_nonconvex_100_100.eps}%
%\label{fig4_sixth_case}}
%\caption{The comparison between SFW, MSHFW and DMFW. (a) $p=100,~n=10$. (b) $p=100,~n=25$. (c) $p=100,~n=100$}
%\label{fig_sim}
%\end{figure*}

\section{Conclusions}
This paper proposed a distributed stochastic Frank-Wolfe algorithm by combining Nesterov's momentum with gradient tracking technique for stochastic optimization problems. As far as we know, this is the first distributed Frank-Wolfe algorithm for solving stochastic optimization problems. For convex objective functions, the proposed method achieves the convergence rate of $\mathcal{O}(k^{-\frac{1}{2}})$, coinciding with that of the centralized stochastic algorithms. In addition, the proposed algorithm achieves the convergence rate of $\mathcal{O}(1/\mathrm{log}_2(k))$ for nonconvex problems under weaker assumptions than existing ones. The efficacy of the proposed algorithm was tested on binary classification problems with convex and nonconvex objective functions. In our future research, it is interesting to consider stochastic optimization with nonsmooth objective functions by using the conditional gradient method.

 % argument is your BibTeX string definitions and bibliography database(s)
%\bibliography{IEEEabrv,../bib/paper}
%
\section{APPENDIX}
%\subsection{Technical  Lemmas}\label{proofA}

\subsection{Proof of Lemma \ref{lemmama1}}\label{proofB}
Before proving Lemma \ref{lemmama1}, we first give the following technical lemmas.

The following lemma is presented in Lemma 2 of the supplementary material for \cite{acc2021ZAkhtar}.
\begin{lemma}\label{lemmalemma1} Let $\phi_k$ be a sequence of real numbers satisfying
	\begin{align}
		\phi_k=\Bigg(1-\frac{A}{(k+t_0)^{r_1}}\Bigg)\phi_{k-1}+\frac{B}{(k+t_0)^{r_2}},\nonumber
	\end{align}
	for some $r_1\in[0,1]$ such that $r_1\leqslant r_2\leqslant 2r_1$, $A> 1$ and $B\geqslant 0$. Then, $\phi_k$ converges to zero at the following rate:
	\begin{align}
		\phi_k\leqslant\frac{H}{(k+t_0+1)^{r_2-r_1}},\nonumber
	\end{align}
	where $H=\mathrm{max}\{\phi_{0}(t_0+1)^{r_2-r_1},\frac{B}{A-1}\}$.
\end{lemma}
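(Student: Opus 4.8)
The plan is to prove the bound by induction on $k$, writing $s:=k+t_0$ for brevity. For the base case $k=0$, the inequality $\phi_0\leqslant H/(t_0+1)^{r_2-r_1}$ is immediate from $H=\mathrm{max}\{\phi_0(t_0+1)^{r_2-r_1},\,B/(A-1)\}$, since the first term in the maximum already forces $H\geqslant\phi_0(t_0+1)^{r_2-r_1}$.

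For the inductive step, suppose $\phi_{k-1}\leqslant H/(k+t_0)^{r_2-r_1}$. Assuming the contraction factor $1-A/s^{r_1}$ is nonnegative (which holds once $s^{r_1}\geqslant A$; the finitely many smaller indices are absorbed into the choice of $t_0$ and the base case), I substitute the inductive hypothesis into the recursion and collect powers of $s$, using $s^{r_1}\cdot s^{r_2-r_1}=s^{r_2}$:
\begin{align}
\phi_k&\leqslant\Big(1-\frac{A}{s^{r_1}}\Big)\frac{H}{s^{r_2-r_1}}+\frac{B}{s^{r_2}}=\frac{H}{s^{r_2-r_1}}-\frac{AH-B}{s^{r_2}}.\nonumber
\end{align}
The role of the hypotheses $A>1$ and $H\geqslant B/(A-1)$ is precisely to guarantee $AH-B\geqslant H$, so that $\phi_k\leqslant\frac{H}{s^{r_2-r_1}}-\frac{H}{s^{r_2}}=\frac{H}{s^{r_2-r_1}}\big(1-s^{-r_1}\big)$.

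It then remains to show $\frac{H}{s^{r_2-r_1}}\big(1-s^{-r_1}\big)\leqslant\frac{H}{(s+1)^{r_2-r_1}}$, i.e. the scalar inequality $1-s^{-r_1}\leqslant(s/(s+1))^{r_2-r_1}$. This is where the structural condition $r_2\leqslant 2r_1$ enters: it yields $r_2-r_1\leqslant r_1$, and since $s/(s+1)\in(0,1)$, raising it to a smaller exponent only increases it, so it suffices to prove $1-s^{-r_1}\leqslant(s/(s+1))^{r_1}$. Clearing denominators, this is equivalent to $(s+1)^{r_1}-s^{r_1}\leqslant(s+1)^{r_1}/s^{r_1}=((s+1)/s)^{r_1}$, whose right-hand side is $\geqslant 1$; by concavity of $t\mapsto t^{r_1}$ with $r_1\leqslant 1$ (mean-value theorem), the left-hand side satisfies $(s+1)^{r_1}-s^{r_1}\leqslant r_1 s^{r_1-1}\leqslant 1$ for $s\geqslant 1$. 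Chaining these estimates closes the induction.

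I expect the final scalar inequality to be the only genuine obstacle: matching the target decay exponent $r_2-r_1$ against the available contraction exponent $r_1$ is exactly what forces the hypothesis $r_1\leqslant r_2\leqslant 2r_1$, and one must also take care that the contraction factor $1-A/s^{r_1}$ stays nonnegative over the indices used, since otherwise the inductive hypothesis cannot be multiplied through while preserving the direction of the inequality. Everything else—the base case and the algebraic simplification producing the quantity $AH-B$—is routine.
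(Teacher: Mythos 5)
The paper does not actually prove this lemma: it is quoted from Lemma 2 of the supplementary material of \cite{acc2021ZAkhtar}, so your argument stands or falls on its own. Its core is correct and is the standard induction. Writing $s=k+t_0$, the base case, the identity $\bigl(1-\frac{A}{s^{r_1}}\bigr)\frac{H}{s^{r_2-r_1}}+\frac{B}{s^{r_2}}=\frac{H}{s^{r_2-r_1}}-\frac{AH-B}{s^{r_2}}$, the implication $H\geqslant\frac{B}{A-1}\Rightarrow AH-B\geqslant H$, and the scalar chain $1-s^{-r_1}\leqslant\bigl(\frac{s}{s+1}\bigr)^{r_1}\leqslant\bigl(\frac{s}{s+1}\bigr)^{r_2-r_1}$ (using $0\leqslant r_2-r_1\leqslant r_1$, and $(s+1)^{r_1}-s^{r_1}\leqslant r_1s^{r_1-1}\leqslant1\leqslant\bigl(\frac{s+1}{s}\bigr)^{r_1}$ for $s\geqslant1$, $r_1\in[0,1]$) all check out.

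The genuine gap is the parenthetical dismissal of the sign issue, namely the claim that indices with $1-A/(k+t_0)^{r_1}<0$ ``are absorbed into the choice of $t_0$ and the base case.'' They cannot be: $t_0$ is part of the given recursion, not a parameter you may adjust, and restarting the induction at the first index $m$ where the factor becomes nonnegative produces a bound whose constant involves $\phi_m$ rather than $\phi_0$, while the stated bound with $H$ can genuinely fail at the early indices. Indeed, the lemma as written is false without a nonnegativity hypothesis: take $r_1=r_2=1$, $A=10$, $B=0$, $t_0=0$, $\phi_0=1$, so that $H=1$; then $\phi_1=(1-\frac{10}{1})\cdot1=-9$ and $\phi_2=(1-\frac{10}{2})\cdot(-9)=36>1=H/(2+t_0+1)^{r_2-r_1}$, violating the claimed bound. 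So the correct repair is to add the hypothesis $A\leqslant(1+t_0)^{r_1}$ (equivalently, $1-A/(k+t_0)^{r_1}\geqslant0$ for all $k\geqslant1$) to the statement, under which your induction is complete, rather than to argue the assumption away. Nothing in this paper is lost by doing so: every invocation here (Lemmas \ref{lemma4444}, \ref{lemma4555}, \ref{lemmalemma34}(b), and Theorem \ref{the1}) has $r_1=1$, $A=2$, $t_0=1$, for which the condition holds with equality, the factor vanishing at $k=1$.
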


\begin{lemma}\label{lemma0} Consider Algorithm \ref{alg:1}. Suppose Assumption \ref{ass1} holds. For all $k=1,2,\cdots,K$, the following relationships are established.

(a) $\frac{1}{n}\sum\limits_{i=1}^{n}s_{k+1}^i=\bar{y}_{k+1};$

(b) $\bar{x}_{k+1}=(1-\eta_k)\bar{x}_k+\eta_k\bar{\theta}_k,$
where $\bar{\theta}_k=\frac{1}{n}\sum_{i=1}^{n}\theta_k^i$.
\end{lemma}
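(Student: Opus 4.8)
The plan is to establish both identities by exploiting the column-stochasticity of $C$ guaranteed by Assumption \ref{ass1} (i.e., $\sum_{i=1}^{n}c_{ij}=1$ for every $j$), together with a short induction for part (a). At heart the lemma is a conservation property: the averaging dynamics in \eqref{alg1step1} and \eqref{algeua1} preserve network means, so per-agent recursions collapse to recursions for the averages $\bar{x}_k$ and $\bar{y}_k$.

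For part (b), I would start from the Frank-Wolfe update \eqref{algstep5}, rewritten as $x_{k+1}^i=(1-\eta_k)\hat{x}_k^i+\eta_k\theta_k^i$, and average over $i\in\mathcal{N}$. The only point needing justification is $\frac{1}{n}\sum_{i=1}^{n}\hat{x}_k^i=\bar{x}_k$. Substituting the consensus step $\hat{x}_k^i=\sum_{j}c_{ij}x_k^j$ and interchanging the order of summation gives $\frac{1}{n}\sum_{i}\hat{x}_k^i=\frac{1}{n}\sum_{j}\big(\sum_{i}c_{ij}\big)x_k^j=\frac{1}{n}\sum_{j}x_k^j=\bar{x}_k$, where the middle equality is precisely column-stochasticity. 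Combined with the definition $\bar{\theta}_k=\frac{1}{n}\sum_{i}\theta_k^i$, averaging the rewritten update yields $\bar{x}_{k+1}=(1-\eta_k)\bar{x}_k+\eta_k\bar{\theta}_k$ directly.

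For part (a), I would prove the slightly stronger statement $\frac{1}{n}\sum_{i}s_\ell^i=\bar{y}_\ell$ for all $\ell\geqslant 1$ by induction on $\ell$; the claim then follows upon setting $\ell=k+1$. The base case $\ell=1$ holds by the initialization $s_1^i=y_1^i$, which gives $\frac{1}{n}\sum_{i}s_1^i=\bar{y}_1$. For the inductive step I would average the gradient-tracking recursion $s_\ell^i=\sum_{j}c_{ij}s_{\ell-1}^j+y_\ell^i-y_{\ell-1}^i$ over $i$, again using column-stochasticity to reduce $\frac{1}{n}\sum_{i}\sum_{j}c_{ij}s_{\ell-1}^j$ to $\frac{1}{n}\sum_{j}s_{\ell-1}^j$. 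By the induction hypothesis this equals $\bar{y}_{\ell-1}$, and the remaining terms telescope as $\frac{1}{n}\sum_{i}s_\ell^i=\bar{y}_{\ell-1}+\bar{y}_\ell-\bar{y}_{\ell-1}=\bar{y}_\ell$, closing the induction.

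Neither part presents a genuine analytical obstacle; the lemma is structural rather than quantitative. The only place requiring care is to invoke the column-sum condition $\sum_{i}c_{ij}=1$ (not the row-sum condition) at each averaging step, and to anchor the induction for (a) correctly at the initialization $s_1^i=y_1^i$ rather than at the generic recursion, since the recursion for $s_k^i$ is itself defined only for $k\geqslant 2$. These two identities will then serve as the workhorse for later proofs (e.g., Lemmas \ref{lemma45} and \ref{lemmalemma34}), where per-agent stochastic quantities must be reduced to their network averages.
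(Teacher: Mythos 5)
Your proposal is correct and follows essentially the same route as the paper: both parts average the per-agent recursions and invoke the column-sum property of the doubly stochastic matrix $C$, with part (a) anchored at the initialization $s_1^i=y_1^i$ (the paper telescopes the recursion implicitly, which is the same induction you spell out explicitly). No gaps; your remark that the $s_k^i$ recursion only applies for $k\geqslant 2$ is a correct and careful observation consistent with the paper's treatment.
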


\begin{proof}
(a) From \eqref{algeua1} of Algorithm \ref{alg:1}, we have
	\begin{align}
		\frac{1}{n}\sum\limits_{i=1}^{n}s_{k+1}^i
		&=\frac{1}{n}\sum\limits_{i=1}^{n}\bigg(\sum\limits_{j=1}^{n}c_{ij}s_k^j+y_{k+1}^i-y_{k}^i\bigg)\nonumber\\
%		=&\frac{1}{n}\sum\limits_{i=1}^{n}s_i(k)+\frac{1}{n}\sum\limits_{i=1}^{n}y_i(k+1)-\frac{1}{n}\sum\limits_{i=1}^{n}y_i(k)\nonumber\\
		&=\frac{1}{n}\sum\limits_{i=1}^{n}s_k^i+\bar{y}_{k+1}-\bar{y}_k\nonumber\\
		&=\frac{1}{n}\sum\limits_{i=1}^{n}y_1^i-\bar{y}_1+\bar{y}_{k+1}
%		=&\frac{1}{m_1}\sum\limits_{i=1}^{m_1}p^1_i(0)+\bar{P}^1(k+1)-\bar{P}^1(0)\nonumber\\
%		=&\frac{1}{m_1}\sum\limits_{i=1}^{m_1}\partial_xf_i(\hat{x}^1_i(0),\hat{x}^2_i(0))+\bar{P}^1(k+1)-\bar{P}^1(0)\nonumber\\
		=\bar{y}_{k+1},\nonumber
	\end{align}
	where the second equality is due to the fact that matrix $C$ is doubly stochastic. Hence, $\frac{1}{n}\sum\limits_{i=1}^{n}s_{k+1}^i=\bar{y}_{k+1}$.
	
	(b) According to the definitions of $\bar{x}_k$ and $x_k^i$, we have
	\begin{align}
		\bar{x}_{k+1}&=\frac{1}{n}\sum_{i=1}^{n}[(1-\eta_k)\sum\limits_{j=1}^{n}c_{ij}x_k^j+\eta_k\theta_k^i]\nonumber\\
		&=\frac{(1-\eta_k)}{n}\sum_{i=1}^{n}x_k^i+\frac{\eta_k}{n}\sum_{i=1}^{n}\theta_k^i\nonumber\\
		&=(1-\eta_k)\bar{x}_k+\eta_k\bar{\theta}_k,\nonumber
	\end{align}
	where the first equality is due to the fact that matrix $C$ is doubly stochastic. Hence, $\bar{x}_{k+1}=(1-\eta_k)\bar{x}_{k}+\eta_k\bar{\theta}_{k}$.
\end{proof}

Then we  give the proof of Lemma \ref{lemmama1}.
\begin{proof}
	We use mathematical induction to prove this lemma. It follows from  the properties of Euclidean norm that
	$
		\|\hat{x}_k^i-\bar{x}_k\|\leqslant\mathop{\mathrm{max}}\limits_{i\in\mathcal{N}}\|\hat{x}_k^i-\bar{x}_k\|\leqslant(\sum_{i=1}^{n}\|\hat{x}_k^i-\bar{x}_k\|^2)^\frac{1}{2}.$	We next prove the following inequality  by using induction on $k$,
	\begin{align}\label{ex2}
		\bigg(\sum_{i=1}^{n}\|\hat{x}_k^i-\bar{x}_k\|^2\bigg)^\frac{1}{2}\leqslant \frac{2C_1}{k+2}=\frac{2k_0\sqrt{n}D}{k+2}=C_1\eta_k.
	\end{align}
	It is obvious that inequality \eqref{ex2} holds for $k=1$ to $k=k_0-2$.
	
	 For induction step, we assume that \eqref{ex2} holds for some $k\geqslant k_0-2$. According to Lemma \ref{lemma0} (b) and \eqref{algstep5}, definitions of $\hat{x}_k^i$ and $\bar{x}_k$, we have
	\begin{align}\label{ex3}
		&\quad\sum_{i=1}^{n}\big{\|}\hat{x}_{k+1}^i-\bar{x}_{k+1}\big{\|}^2\nonumber\\
		&=\sum_{i=1}^{n}\bigg{\|}\!\sum_{j=1}^{n}c_{ij}(1\!-\!\eta_k)\hat{x}_k^j\!+\!\sum_{j=1}^{n}\!c_{ij}\eta_k\theta_k^j\!-\!(1\!-\!\eta_k)\bar{x}_k\!-\!\eta_k\bar{\theta}_k\bigg{\|}^2\nonumber\\
		&=\sum_{i=1}^{n}\bigg{\|}\sum_{j=1}^{n}c_{ij}[(1-\eta_k)\sum_{h=1}^{n}c_{jh}x_k^h+\eta_k\theta_k^j]\nonumber\\
		&\quad-\frac{1}{n}\sum_{j=1}^{n}[(1-\eta_k)\sum_{h=1}^{n}c_{jh}x_k^h+\eta_k\theta_k^j]\bigg{\|}^2\nonumber\\
		&\leqslant|\lambda|^2\sum_{i=1}^{n}\!\big{\|}(1-\eta_k)(\hat{x}_k^i\!-\!\bar{x}_k)\!+\!\eta_k(\theta_k^i-\bar{\theta}_k)\big{\|}^2,
	\end{align}
 where $\lambda$ is the second largest eigenvalue
of $C$; the last inequality follows from \eqref{fact1}. Now we only need prove  that $\sum_{i=1}^{n}\|(1-\eta_k)(\hat{x}_k^i-\bar{x}_k)+\eta_k(\theta_k^i-\bar{\theta}_k)\|^2$ has an upper bound. We have
	\begin{align}\label{ex4}
		&\quad\sum_{i=1}^{n}\|(1-\eta_k)(\hat{x}_k^i-\bar{x}_k)+\eta_k(\theta_k^i-\bar{\theta}_k)\|^2\nonumber\\
		&\overset{(a)}{\leqslant}\sum_{i=1}^{n}[(1\!-\!\eta_k)^2\|\hat{x}_k^i\!-\!\bar{x}_k\|^2+\!\eta^2_kD^2\!+\!2\eta_k(1\!-\eta_k)D\|\hat{x}_k^i\!-\!\bar{x}_k\| ]\nonumber\\
		&\overset{(b)}{\leqslant}\sum_{i=1}^{n}(\|\hat{x}_k^i-\bar{x}_k\|^2+2\eta_kD\|\hat{x}_k^i-\bar{x}_k\|+\eta^2_kD^2)\nonumber\\
		&\overset{(c)}{\leqslant}C_1^2\eta^2_k+n\eta^2_kD^2+2\eta_k D\sqrt{n}\sqrt{\sum_{i=1}^{n}\|\hat{x}_k^i-\bar{x}_k\|^2}\nonumber\\
		&\leqslant C_1^2\eta^2_k+n\eta^2_kD^2+2D\eta_k^2\sqrt{n}C_1\nonumber\\
%		=&\eta_k^2(C_1^2+nD^2+2D\sqrt{n}C_1)\nonumber\\
%		=&\eta_k^2(C_1+\sqrt{n}D)^2\nonumber\\
		&=\eta_k^2(C_1+C_1{k_0}^{-1})^2\nonumber\\
		&=\bigg(\frac{k_0+1}{k_0}C_1\eta_k\bigg)^2,
	\end{align}
	where $(a)$ follows from Assumption \ref{assumption3}; $(b)$ follows from $1-\eta_k\leqslant 1$; $(c)$ is due to $\sum_{i=1}^{n}\|\hat{x}_k^i-\bar{x}_k\|\leqslant\sqrt{n}\sqrt{\sum_{i=1}^{n}\|\hat{x}_k^i-\bar{x}_k\|^2}$ and the induction hypothesis \eqref{ex2}. Substituting \eqref{ex4}, $\lambda\leqslant\big(\frac{k_0}{k_0+1}\big)^2$ and $\eta_k=\frac{2}{k+2}$ into \eqref{ex3}, we arrive at
	\begin{align}\label{ex5}
		&\quad\sum_{i=1}^{n}\|\hat{x}_{k+1}^i-\bar{x}_{k+1}\|^2\nonumber\\
		&\leqslant\bigg(\frac{2(k_0+1)}{k_0(k+2)}\big(\frac{k_0}{k_0+1}\big)^2C_1\bigg)^2\nonumber\\
%		=&\big(\frac{2k_0}{(k_0+1)(k+2)}\sqrt{C_1}\big)^2\nonumber\\
		&\leqslant\bigg(\frac{2(k+2)}{(k+2+1)(k+2)}C_1\bigg)^2
		=C^2_1\eta^2_{k+1}
	\end{align}
	where the second inequality is because of  the  monotonically increasing property of function $g(v)=v/(1+v)$ with respect to $v$ over $[0,\infty)$. It follows from \eqref{ex5}  that  $\sum_{i=1}^{n}\|\hat{x}_{k+1}^i-\bar{x}_{k+1}\|_2\leqslant C_1\eta_{k+1}$, that is, \eqref{ex2} holds for iteration $k+1$. Hence, $\|\hat{x}_{k}^i-\bar{x}_{k}\|\leqslant 2C_1/(k+2)$ for all $k\geqslant 1$.
\end{proof}

\subsection{Technical Lemmas}

\begin{lemma}\label{lemma32}
Suppose Assumptions \ref{ass1}-\ref{assumption3} hold. Let $\eta_k=\frac{2}{k+2}$. Then, for any $i\in\mathcal{N}$ and $k\geqslant 1$, we have
\begin{align}\label{lemma32equ}
	\|\hat{x}_{k+1}^i-\hat{x}_k^i\|\leqslant\frac{2(D+2C_1)}{k+2},
\end{align}
where $C_1=k_0\sqrt{n}D$.
\end{lemma}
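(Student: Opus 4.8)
The plan is to avoid expanding $\hat{x}_{k+1}^i-\hat{x}_k^i$ directly in terms of the per-agent increments $x_{k+1}^j-x_k^j$, since those increments contain the uncontrolled consensus gap $\hat{x}_k^j-x_k^j$. Instead I would route the estimate through the running average $\bar{x}_k$, which is exactly the quantity Lemma \ref{lemmama1} already controls. Concretely, I insert $\bar{x}_{k+1}$ and $\bar{x}_k$ and apply the triangle inequality,
\begin{align}
\|\hat{x}_{k+1}^i-\hat{x}_k^i\|\leqslant\|\hat{x}_{k+1}^i-\bar{x}_{k+1}\|+\|\bar{x}_{k+1}-\bar{x}_k\|+\|\bar{x}_k-\hat{x}_k^i\|.\nonumber
\end{align}
The two outer terms are consensus errors at iterations $k+1$ and $k$, and the middle term is the drift of the average iterate, each of which can be handled separately.

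For the two outer terms I would invoke Lemma \ref{lemmama1}, which gives $\|\hat{x}_{k+1}^i-\bar{x}_{k+1}\|\leqslant \frac{2C_1}{k+3}$ and $\|\hat{x}_k^i-\bar{x}_k\|\leqslant\frac{2C_1}{k+2}$. For the middle term I would use Lemma \ref{lemma0}(b), which states $\bar{x}_{k+1}=(1-\eta_k)\bar{x}_k+\eta_k\bar{\theta}_k$, so that $\bar{x}_{k+1}-\bar{x}_k=\eta_k(\bar{\theta}_k-\bar{x}_k)$. Since $\bar{\theta}_k=\frac{1}{n}\sum_{i=1}^n\theta_k^i$ and $\bar{x}_k=\frac{1}{n}\sum_{i=1}^n x_k^i$ are both convex combinations of points in $\mathcal{X}$, they lie in $\mathcal{X}$, and Assumption \ref{assumption3} yields $\|\bar{\theta}_k-\bar{x}_k\|\leqslant D$; hence $\|\bar{x}_{k+1}-\bar{x}_k\|\leqslant\eta_k D=\frac{2D}{k+2}$.

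Combining the three bounds and using the crude monotonicity $\frac{1}{k+3}\leqslant\frac{1}{k+2}$ on the first term gives
\begin{align}
\|\hat{x}_{k+1}^i-\hat{x}_k^i\|\leqslant\frac{2C_1}{k+3}+\frac{2D}{k+2}+\frac{2C_1}{k+2}\leqslant\frac{4C_1+2D}{k+2}=\frac{2(D+2C_1)}{k+2},\nonumber
\end{align}
which is the claimed inequality. The estimate is essentially tight in the sense that all three contributions are of the same order $\mathcal{O}(1/k)$, so no slack is wasted. I do not expect a genuine obstacle here: the only conceptual step is recognizing that one should measure the one-step change of $\hat{x}_k^i$ against the average trajectory (exploiting Lemma \ref{lemmama1} and Lemma \ref{lemma0}(b)) rather than agent-wise, after which the argument reduces to the triangle inequality together with compactness of $\mathcal{X}$.
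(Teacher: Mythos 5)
Your proof is correct, but it follows a different decomposition than the paper's. The paper never passes through $\bar{x}_{k+1}$: it expands $\hat{x}_{k+1}^i=\sum_j c_{ij}x_{k+1}^j$ directly and bounds $\sum_j c_{ij}\left(\|x_{k+1}^j-\hat{x}_k^j\|+\|\hat{x}_k^j-\hat{x}_k^i\|\right)$, using the update \eqref{algstep5} to get $\|x_{k+1}^j-\hat{x}_k^j\|=\eta_k\|\theta_k^j-\hat{x}_k^j\|\leqslant\eta_k D$ and Lemma \ref{lemmama1} (twice, both at time $k$) to get $\|\hat{x}_k^j-\hat{x}_k^i\|\leqslant 2C_1\eta_k$. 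Your route instead splits agent-$i$'s one-step change into the consensus errors at times $k$ and $k+1$ plus the drift of the average, bounding the drift by $\eta_k\|\bar{\theta}_k-\bar{x}_k\|\leqslant\eta_k D$ via Lemma \ref{lemma0}(b); the arithmetic then yields exactly the same constant $2(D+2C_1)$. One remark: your stated motivation is slightly off — the paper's agent-wise expansion never encounters the "uncontrolled gap" $\hat{x}_k^j-x_k^j$ you were worried about, because the FW step gives $x_{k+1}^j-\hat{x}_k^j=\eta_k(\theta_k^j-\hat{x}_k^j)$ in closed form, so the direct expansion works fine. What your version buys is a cleaner, purely triangle-inequality argument that avoids manipulating the weighted sums $\sum_j c_{ij}(\cdot)$; what it costs is invoking Lemma \ref{lemmama1} at iteration $k+1$ as well as $k$ (harmless, since that lemma holds for all $k\geqslant 1$) and relying on feasibility of $\bar{x}_k$ and $\bar{\theta}_k$, which — like the paper's implicit use of $\hat{x}_k^j\in\mathcal{X}$ — follows from convexity of $\mathcal{X}$ and an easy induction showing all iterates stay in $\mathcal{X}$.
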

\begin{proof}
It follows from the definition of $\hat{x}_k^i$ that
	\begin{align}
	&\quad\|\hat{x}_{k+1}^i-\hat{x}_k^i\|\nonumber\\
%	=&\|\sum_{j=1}^{n}c_{ij}(x_{k+1}^j-\hat{x}_{k}^j+\hat{x}_k^j-\hat{x}_k^i)\|\nonumber\\
	&\leqslant\sum_{j=1}^{n}c_{ij}(\|x_{k+1}^j-\hat{x}_k^j\|+\|\hat{x}_k^j-\hat{x}_k^i\|)\nonumber\\
	&\overset{(a)}{=}\sum_{j=1}^{n}c_{ij}(\|\eta_k\theta_k^j-\eta_k\hat{x}_k^j\|+\|\hat{x}_k^j-\bar{x}_k+\bar{x}_k-\hat{x}_k^i\|)\nonumber\\
	&\overset{(b)}{\leqslant}\sum_{j=1}^{n}c_{ij}(\|\hat{x}_k^j-\bar{x}_k\|+\|\hat{x}_k^i-\bar{x}_k\|)+\sum_{j=1}^{n}c_{ij}\|\eta_k(\theta_k^j-\hat{x}_k^j)\|\nonumber\\
	&\overset{(c)}{\leqslant}\sum_{j=1}^{n}c_{ij}(\eta_kD+2C_1\eta_k)
	=\frac{2(D+2C_1)}{k+2},
\end{align}
where $(a)$ follows from \eqref{algstep5}; $(b)$ holds for the triangle inequality;  $(c)$ is because of Assumption \ref{assumption3} and Lemma \ref{lemmama1}.
\end{proof}

\begin{lemma}\label{lemma4444}
Suppose Assumptions \ref{ass1}-\ref{assup5} hold. Choose the step sizes $\gamma_k=\frac{2}{k+1}$ and $\eta_k=\frac{2}{k+2}$. Then, for any $i\in\mathcal{N}$ and $k\geqslant 1$, the variable $y^i_k$ of Algorithm \ref{alg:1} satisfies the following bound:
	\begin{align}\label{lemma44eqw1}
	&\mathbb{E}[\|y_k^i\|]\leqslant\psi,
\end{align}
where  $\psi=\mathop{\mathrm{max}}\limits_{i\in\mathcal{N}}\{\|y_1^i\|,2G+2L(D+2C_1)\}$.
\end{lemma}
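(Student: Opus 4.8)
The plan is to establish \eqref{lemma44eqw1} by induction on $k$, exploiting the recursive structure of the momentum update \eqref{algstep3}. For the base case $k=1$ the bound $\mathbb{E}[\|y_1^i\|]\leqslant\psi$ is immediate from the definition of $\psi$, since $\psi\geqslant\|y_1^i\|$; if one prefers to treat $y_1^i=\nabla f_i(\hat{x}_1^i,\xi_1^i)$ as random, then $\mathbb{E}[\|y_1^i\|]\leqslant G\leqslant\psi$ by Fact 2. For the inductive step I assume $\mathbb{E}[\|y_{k-1}^i\|]\leqslant\psi$. The key algebraic observation is that \eqref{algstep3} can be regrouped so that the fresh stochastic gradient carries only the small coefficient $\gamma_k$ while the rest is a finite difference:
\begin{align}
y_k^i=(1-\gamma_k)y_{k-1}^i+\gamma_k\nabla f_i(\hat{x}_k^i,\xi_k^i)+(1-\gamma_k)\big(\nabla f_i(\hat{x}_k^i,\xi_k^i)-\nabla f_i(\hat{x}_{k-1}^i,\xi_k^i)\big).\nonumber
\end{align}

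Next I would take norms and apply the triangle inequality, bounding $\|y_k^i\|$ by $(1-\gamma_k)\|y_{k-1}^i\|$, the term $\gamma_k\|\nabla f_i(\hat{x}_k^i,\xi_k^i)\|$, and $(1-\gamma_k)\|\nabla f_i(\hat{x}_k^i,\xi_k^i)-\nabla f_i(\hat{x}_{k-1}^i,\xi_k^i)\|$. The $L$-smoothness of $f_i(\cdot,\xi^i)$ (Assumption \ref{ass2}) controls the finite-difference term by $L\|\hat{x}_k^i-\hat{x}_{k-1}^i\|$ pointwise in $\xi_k^i$, and Lemma \ref{lemma32} gives $\|\hat{x}_k^i-\hat{x}_{k-1}^i\|\leqslant 2(D+2C_1)/(k+1)$. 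For the middle term I would condition on the sigma field generated by all randomness up to (but not including) the draw of $\xi_k^i$; since $\hat{x}_k^i$ is measurable with respect to that field and lies in $\mathcal{X}$, Fact 2 yields $\mathbb{E}[\|\nabla f_i(\hat{x}_k^i,\xi_k^i)\|]\leqslant G$ after taking total expectation. Assembling these bounds gives
\begin{align}
\mathbb{E}[\|y_k^i\|]\leqslant(1-\gamma_k)\mathbb{E}[\|y_{k-1}^i\|]+\gamma_k G+(1-\gamma_k)L\frac{2(D+2C_1)}{k+1}.\nonumber
\end{align}

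Finally I would use the choice $\gamma_k=2/(k+1)$, which makes $2(D+2C_1)/(k+1)=\gamma_k(D+2C_1)$, together with $(1-\gamma_k)\leqslant 1$ and the induction hypothesis, to obtain
\begin{align}
\mathbb{E}[\|y_k^i\|]\leqslant(1-\gamma_k)\psi+\gamma_k\big(G+L(D+2C_1)\big)=\psi-\gamma_k\big(\psi-G-L(D+2C_1)\big).\nonumber
\end{align}
Because $\psi\geqslant 2G+2L(D+2C_1)\geqslant G+L(D+2C_1)$ by its definition, the subtracted quantity is nonnegative, so $\mathbb{E}[\|y_k^i\|]\leqslant\psi$ and the induction closes. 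The main obstacle I anticipate is the measurability bookkeeping in the conditional expectation: one must confirm that $\hat{x}_k^i$ and $\hat{x}_{k-1}^i$ are determined before $\xi_k^i$ is sampled, so that the deterministic smoothness bound and Fact 2 may be legitimately applied inside the expectation. The remaining work is the regrouping above and the cancellation that is deliberately built into the definition of $\psi$.
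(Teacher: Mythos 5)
Your proposal is correct and matches the paper's proof in all essential respects: the same triangle-inequality split of the momentum update \eqref{algstep3} (you attach $\gamma_k$ to the gradient at $\hat{x}_k^i$ whereas the paper attaches it to $\hat{x}_{k-1}^i$, an immaterial choice), $L$-smoothness plus Lemma \ref{lemma32} for the finite-difference term, and Fact 2 for the $\gamma_k$-weighted gradient term, yielding the identical recursion $\mathbb{E}[\|y_k^i\|]\leqslant(1-\gamma_k)\mathbb{E}[\|y_{k-1}^i\|]+\gamma_k G+L(D+2C_1)\eta_{k-1}$. The only divergence is in the closing step: the paper finishes by invoking the sequence lemma (Lemma \ref{lemmalemma1} with $r_1=r_2=1$, $A=2$, $B=2G+2L(D+2C_1)$), whereas you finish with a direct induction exploiting the slack built into the definition of $\psi$ --- which is exactly what that lemma encapsulates in this special case.
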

\begin{proof}
%It follows from Fact 3 that $\mathbb{E}[\|y_1^i\|]=\mathbb{E}[\|\nabla f_i(\hat{x}_1^i,\xi_1^i)\|]\leqslant G\leqslant\psi$ when $k=1$.
It is obvious that \eqref{lemma44eqw1} holds when $k=1$. Next let's  discuss the case when $k\geqslant 2$. It follows from \eqref{algstep3} of Algorithm \ref{alg:1} that
\begin{align}
          &\quad\mathbb{E}[\|y_k^i\|]\nonumber\\
          &=\mathbb{E}[\|(1-\gamma_k)y_{k-1}^i+\nabla f_i(\hat{x}_k^i,\xi_k^i)-(1-\gamma_k)\nabla f_i(\hat{x}_{k-1}^i,\xi_k^i)\|]\nonumber\\
          &\leqslant(1\!-\!\gamma_k)\mathbb{E}[\|y_{k-1}^i\|]\!+\!\mathbb{E}[\|\nabla\! f_i(\hat{x}_k^i,\!\xi_k^i)\!-\!(1\!-\!\gamma_k)\nabla\! f_i(\hat{x}_{k-1}^i\!,\xi_k^i)\|]\nonumber\\
          &\leqslant(1-\gamma_k)\mathbb{E}[\|y_{k-1}^i\|]+\gamma_k\mathbb{E}[\|\nabla f_i(\hat{x}_{k-1}^i,\xi_{k}^i)\|]\nonumber\\
          &\quad+\mathbb{E}[\|\nabla f_i(\hat{x}_{k}^i,\xi_{k}^i)-\nabla f_i(\hat{x}_{k-1}^i,\xi_{k}^i)\|]\nonumber\\
         &\overset{(a)}{\leqslant}(1-\gamma_k)\mathbb{E}[\|y_{k-1}^i\|]+G\gamma_k+L\mathbb{E}[\|\hat{x}_{k}^i-\hat{x}_{k-1}^i\|]\nonumber\\
          &\overset{(b)}{\leqslant}(1-\gamma_k)\mathbb{E}[\|y_{k-1}^i\|]+G\gamma_k+L(D+2C_1)\eta_{k-1}\nonumber\\
          &=(1-\frac{2}{k+1})\mathbb{E}[\|y_{k-1}^i\|]+\frac{2G+2L(D+2C_1)}{k+1}
\end{align}
where $(a)$ is due to the $L$-smooth property of function $f_i$ and Fact 2; $(b)$ follows from \eqref{lemma32equ}. By using Lemma \ref{lemmalemma1} ($r_1=1,~r_2=1,~A=2$ and $B=2G+2L(D+2C_1)$), we obtain $
	\mathbb{E}[\|y_k^i\|]\leqslant\psi,$ where $\psi=\mathop{\mathrm{max}}\limits_{i\in\mathcal{N}}\{\|y_1^i\|,2G+2L(D+2C_1)\}$.
\end{proof}

\begin{lemma}\label{lemma4555}
Suppose Assumptions \ref{ass1}-\ref{assup5} hold. Choose the step sizes $\gamma_k=\frac{2}{k+1}$ and $\eta_k=\frac{2}{k+2}$. Then, for any $i\in\mathcal{N}$ and $k\geqslant 1$, it holds
	\begin{align}\label{lemma45eqw1}
	&\mathbb{E}[\|y_k^i\|^2]\leqslant\hat{\psi},
\end{align}
where $\hat{\psi}=\mathop{\mathrm{max}}\limits_{i\in\mathcal{N}}\{\|y_1^i\|^2,4L(D+2C_1)\psi+4G\psi+8G^2+8L^2(D+2C_1)^2\}$.
%$\psi=\mathop{\mathrm{max}}\limits_{i\in\mathcal{N}}\{\|y_2^i\|,2G+2L(D+2C_1)\}$ and $C_1=2\sqrt{n}D$.
\end{lemma}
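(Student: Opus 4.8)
The plan is to mirror the argument used for Lemma~\ref{lemma4444}, but now squaring the momentum recursion rather than taking a single norm. For the base case $k=1$ the bound is immediate, since $\mathbb{E}[\|y_1^i\|^2]=\|y_1^i\|^2\leqslant\hat{\psi}$ by the very definition of $\hat{\psi}$ as a maximum. For $k\geqslant 2$ I would start from \eqref{algstep3} and write $y_k^i=(1-\gamma_k)y_{k-1}^i+w_k$, where $w_k:=\nabla f_i(\hat{x}_k^i,\xi_k^i)-(1-\gamma_k)\nabla f_i(\hat{x}_{k-1}^i,\xi_k^i)$, then expand
\begin{align}
\|y_k^i\|^2=(1-\gamma_k)^2\|y_{k-1}^i\|^2+2(1-\gamma_k)\langle y_{k-1}^i,w_k\rangle+\|w_k\|^2.\nonumber
\end{align}

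Next I would bound $w_k$. Adding and subtracting $\nabla f_i(\hat{x}_{k-1}^i,\xi_k^i)$ and using the triangle inequality gives $\|w_k\|\leqslant\|\nabla f_i(\hat{x}_k^i,\xi_k^i)-\nabla f_i(\hat{x}_{k-1}^i,\xi_k^i)\|+\gamma_k\|\nabla f_i(\hat{x}_{k-1}^i,\xi_k^i)\|$. The first term is controlled by the $L$-smoothness of $f_i$ (Assumption~\ref{ass2}) together with the increment bound of Lemma~\ref{lemma32}, yielding $L(D+2C_1)\eta_{k-1}$; since $\eta_{k-1}=\tfrac{2}{k+1}=\gamma_k$ this becomes $\gamma_k L(D+2C_1)$, so that $\|w_k\|\leqslant\gamma_k\big(L(D+2C_1)+\|\nabla f_i(\hat{x}_{k-1}^i,\xi_k^i)\|\big)$. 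Squaring and using $(a+b)^2\leqslant 2a^2+2b^2$ followed by Fact~2 ($\mathbb{E}[\|\nabla f_i(x,\xi^i)\|^2]\leqslant G^2$) bounds $\mathbb{E}[\|w_k\|^2]$ by a term of order $\gamma_k^2\big(L^2(D+2C_1)^2+G^2\big)$.

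The crucial step is the cross term $2(1-\gamma_k)\langle y_{k-1}^i,w_k\rangle$, which I expect to be the main obstacle because it is only $O(\gamma_k)$ (not $O(\gamma_k^2)$) and therefore dictates the leading constant. After Cauchy--Schwarz it is bounded by $2(1-\gamma_k)\gamma_k\|y_{k-1}^i\|\big(L(D+2C_1)+\|\nabla f_i(\hat{x}_{k-1}^i,\xi_k^i)\|\big)$. To take expectations I would condition on $\mathcal{F}_{k-1}$: since $y_{k-1}^i$ and $\hat{x}_{k-1}^i$ are $\mathcal{F}_{k-1}$-measurable while the fresh sample $\xi_k^i$ is independent of $\mathcal{F}_{k-1}$, Fact~2 gives $\mathbb{E}[\|\nabla f_i(\hat{x}_{k-1}^i,\xi_k^i)\|\mid\mathcal{F}_{k-1}]\leqslant G$, and the tower property together with the first-moment bound $\mathbb{E}[\|y_{k-1}^i\|]\leqslant\psi$ from Lemma~\ref{lemma4444} yields $\mathbb{E}[\|y_{k-1}^i\|\,\|\nabla f_i(\hat{x}_{k-1}^i,\xi_k^i)\|]\leqslant G\psi$. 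Hence the cross term contributes at most $O(\gamma_k)\big(L(D+2C_1)\psi+G\psi\big)$.

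Finally I would collect the three pieces and use $(1-\gamma_k)^2\leqslant 1-\gamma_k$ and $\gamma_k^2\leqslant\gamma_k$ (valid since $\gamma_k\in(0,1]$ for $k\geqslant 1$) to obtain a recursion of the form
\begin{align}
\mathbb{E}[\|y_k^i\|^2]\leqslant(1-\gamma_k)\mathbb{E}[\|y_{k-1}^i\|^2]+\gamma_k B,\nonumber
\end{align}
with $B$ a constant assembled from $L(D+2C_1)\psi$, $G\psi$, $G^2$ and $L^2(D+2C_1)^2$. Writing $\gamma_k=\tfrac{2}{k+1}$ puts this in the template of Lemma~\ref{lemmalemma1} with $r_1=r_2=1$, $A=2$ and $t_0=1$; since $r_2-r_1=0$ the conclusion is the constant bound $\mathbb{E}[\|y_k^i\|^2]\leqslant\max\{\|y_1^i\|^2,B\}=\hat{\psi}$, matching the stated value after the bookkeeping of the coefficients.
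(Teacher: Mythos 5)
Your proposal is correct and follows essentially the same route as the paper's proof: expand the squared momentum recursion, bound the increment via $L$-smoothness and Lemma \ref{lemma32}, control the $O(\gamma_k)$ cross term using Fact 2 together with the first-moment bound $\mathbb{E}[\|y_{k-1}^i\|]\leqslant\psi$ from Lemma \ref{lemma4444}, and close with Lemma \ref{lemmalemma1} ($r_1=r_2=1$, $A=2$). The only differences are cosmetic: you condition on $\mathcal{F}_{k-1}$ where the paper writes $\mathbb{E}_k$, and your bookkeeping (using $\gamma_k^2\leqslant\gamma_k$) gives slightly smaller constants than the paper's $8G^2+8L^2(D+2C_1)^2$ terms, which still imply the stated bound with $\hat{\psi}$.
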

\begin{proof}
%It follows from Fact 3 that $\mathbb{E}[\|y_1^i\|^2]=\mathbb{E}[\|\nabla f_i(\hat{x}_1^i,\xi_1^i)\|^2]\leqslant G^2\leqslant\hat{\psi}$ when $k=1$.
It is obvious that \eqref{lemma45eqw1} holds when $k=1$. Next let's  discuss the case when $k\geqslant 2$. From \eqref{algstep3} of Algorithm \ref{alg:1}, we have
\begin{align}\label{qq1}
         &\quad\|y_k^i\|^2
%          =&\|(1-\gamma_k)y_i(k-1)+\nabla f_i(\hat{x}_i(k),\xi_i(k))\nonumber\\
%          &-(1-\gamma_k)\nabla f_i(\hat{x}_i(k-1),\xi_i(k))\|^2\nonumber\\
          \leqslant(1-\gamma_k)^2\|y_{k-1}^i\|^2+2(1-\gamma_k)\|\nabla f_i(\hat{x}_k^i,\xi_k^i)\nonumber\\
          &\quad-(1-\gamma_k)\nabla f_i(\hat{x}_{k-1}^i,\xi_k^i)\|\|y_{k-1}^i\|+\|\nabla f_i(\hat{x}^i_k,\xi^i_k)\nonumber\\
          &\quad-(1-\gamma_k)\nabla f_i(\hat{x}_{k-1}^i,\xi_k^i)\|^2\nonumber\\
          &\overset{(a)}{\leqslant}\|\nabla f_i(\hat{x}_k^i,\xi_k^i)-\nabla f_i(\hat{x}_{k-1}^i,\xi_k^i)+\gamma_k\nabla f_i(\hat{x}_{k-1}^i,\xi_k^i)\|^2\nonumber\\
          &\quad+(1-\gamma_k)^2\|y^i_{k-1}\|^2+2(1-\gamma_k)\big(\|\nabla f_i(\hat{x}^i_k,\xi_k^i)\nonumber\\
          &\quad-\nabla f_i(\hat{x}_{k-1}^i,\xi_k^i)\|+\gamma_k\|\nabla f_i(\hat{x}_{k-1}^i,\xi_k^i)\|\big)\|y_{k-1}^i\|\nonumber\\
          &\overset{(b)}{\leqslant}\!2L^2\|\hat{x}_k^i\!-\!\hat{x}_{k-1}^i\|^2\!+\!2\gamma_k^2\|\nabla f_i(\hat{x}_{k-1}^i,\xi_k^i)\|^2\!+\!(1\!-\!\gamma_k)\|y_{k-1}^i\|^2\nonumber\\
          &\quad+2\big(L\|\hat{x}_k^i-\hat{x}_{k-1}^i\|+\gamma_k\|\nabla f_i(\hat{x}_{k-1}^i,\xi_k^i)\|\big)\|y_{k-1}^i\|\nonumber\\
          &\overset{(c)}{\leqslant}(1-\gamma_k)\|y_{k-1}^i\|^2+2\gamma_k^2\|\nabla f_i(\hat{x}_{k-1}^i,\xi_k^i)\|^2+2[L(D\nonumber\\
          &\quad+2C_1)\eta_{k-1}+\gamma_k\|\nabla f_i(\hat{x}_{k-1}^i,\xi_k^i)\|]\|y_{k-1}^i\|\nonumber\\
          &\quad+2L^2(D+2C_1)^2\eta_{k-1}^2
%          \leqslant&(1-\gamma_k)\|y_i(k-1)\|+\gamma_k\|\nabla f_i(\hat{x}_i(k-1),\xi_i(k))\|\nonumber\\
%          &+\|\nabla f_i(\hat{x}_i(k),\xi_i(k))-\nabla f_i(\hat{x}_i(k-1),\xi_i(k))\|\nonumber\\
%          \leqslant&(1-\gamma_k)y_i(k-1)+G\gamma_k+L\|\hat{x}_i(k)-\hat{x}_i(k-1)\|\nonumber\\
%          \leqslant&(1-\gamma_k)y_i(k-1)+G\gamma_k+L(D+2C_1)\eta_{k-1}\nonumber\\
%          =&(1-\frac{2}{k+1})y_i(k-1)+\frac{2G+2L(D+2C_1)}{k+1},
\end{align}
where $(a)$ is due to the triangle inequality; $(b)$ holds because of the fact $1-\gamma_k\leqslant 1$ and the $L$-smooth property of function $f_i$; $(c)$ follows from \eqref{lemma32equ}. Taking the conditional expectation of \eqref{qq1} on $\mathcal{F}_k$, we obtain
\begin{align}\label{eq23}
	\mathbb{E}_k[\|y_k^i\|^2]
	&\leqslant(1-\gamma_k)\|y_{k-1}^i\|^2+2L(D+2C_1)\eta_{k-1}\|y_{k-1}^i\|\nonumber\\
	&\quad+2\gamma_k\mathbb{E}_k[\|\nabla f_i(\hat{x}_{k-1}^i,\xi_k^i)\|]\|y_{k-1}^i\|\nonumber\\
	&\quad+2\gamma_k^2\mathbb{E}_k[\|\nabla f_i(\hat{x}_{k-1}^i,\xi_k^i)\|^2]\nonumber\\
	&\quad+2L^2(D+2C_1)^2\eta_{k-1}^2\nonumber\\
	&\leqslant(1\!-\!\gamma_k)\|y_{k\!-\!1}^i\|^2\!\!+\!2L(D\!+\!2C_1)\eta_{k\!-\!1}\|y_{k\!-\!1}^i\|\!+\!2\gamma_k^2G^2\nonumber\\
	&\quad+2\gamma_kG\|y_{k-1}^i\|+2L^2(D+2C_1)^2\eta_{k-1}^2
\end{align}
where the last inequality follows from Fact 2. Taking the full expectation of \eqref{eq23} and choosing the step sizes $\gamma_k=\frac{2}{k+1}$, $\eta_k=\frac{2}{k+2}$, we arrive at
\begin{align}
	\mathbb{E}[\|y_k^i\|^2]
	&\leqslant(1\!-\!\gamma_k)\mathbb{E}[\|y_{k-1}^i\|^2]\!+2L(D+2C_1)\eta_{k-1}\mathbb{E}[\|y_{k-1}^i\|]\nonumber\\
	&\quad+\!2\gamma_kG\mathbb{E}[\|y_{k-1}^i\|]\!+\!2\gamma_k^2G^2+2L^2(D\!+\!2C_1)^2\eta_{k-1}^2\nonumber\\
	&\leqslant(1\!-\!\gamma_k)\mathbb{E}[\|y_{k-1}^i\|^2]\!+\!2L(D\!+\!2C_1)\psi\eta_{k-1}+2\gamma_kG\psi\nonumber\\
	&\quad+2\gamma_k^2G^2+2L^2(D+2C_1)^2\eta_{k-1}^2\nonumber\\
	&\leqslant(1-\frac{2}{k+1})\mathbb{E}[\|y_{k-1}^i\|^2]+\frac{4L(D+2C_1)\psi+4G\psi}{k+1}\nonumber\\
	&\quad+\frac{8G^2+8L^2(D+2C_1)^2}{k+1}\nonumber
\end{align}
where the second inequality follows from \eqref{lemma44eqw1} of Lemma \ref{lemma4444}. By using Lemma \ref{lemmalemma1} ($r_1=1,~r_2=1,~A=2$ and $B=4L(D+2C_1)\psi+4G\psi+8G^2+8L^2(D+2C_1)^2$), we obtain
$
	\mathbb{E}[\|y_{k}^i\|^2]\leqslant\hat{\psi}
$,
where $\hat{\psi}=\mathop{\mathrm{max}}\limits_{i\in\mathcal{N}}\{\|y_1^i\|^2,4L(D+2C_1)\psi+4G\psi+8G^2+8L^2(D+2C_1)^2\}$.
\end{proof}

\subsection{Proof of Lemma \ref{lemma45}}\label{proofC}
\begin{proof}
%	(i) We proof (\ref{lem2ex1}) by utilizing simple induction. It is obvious that (\ref{lem2ex1}) holds when $k=1$. Then, we suppose (\ref{lem2ex1}) is true up to some $k$ for induction step. From line 3 of Algorithm \ref{alg:1}, it has
%	\begin{align}
%		\sum_{i=1}^{m_1}p^1_i(k+1)=&\sum_{i=1}^{m_1}(p^1_i(k)-\partial_xf_i(\hat{x}^1_i(k),\hat{x}^2_i(k)))\nonumber\\
%		&+\sum_{i=1}^{m_1}\partial_xf_i(\hat{x}^1_i(k+1),\hat{x}^2_i(k+1)).\nonumber
%	\end{align}
%	Note that $\sum_{i=1}^{m_1}(p^1_i(k)-\partial_xf_i(\hat{x}^1_i(k),\hat{x}^2_i(k)))=0$ due to the induction hypothesis. Hence, the induction step is completed, that is, (\ref{lem2ex1}) holds for all $k>1$. Similarly, (\ref{lem2ex2}) holds. The proof of part (i) is completed.
	
	 It follows from the properties of Euclidean norm that $
		\mathbb{E}[\|p_k^i-\bar{y}_k\|^2]\leqslant\mathbb{E}[\sum_{i=1}^{n}\|p_k^i-\bar{y}_k\|^2].$
	Then, proving inequality $\mathbb{E}[\|p_k^i-\bar{y}_k\|^2]\leqslant 4C_2/(k+2)^2$ can be transformed into proving the inequality
	\begin{align}\label{lem2ex3}
		&\quad\mathbb{E}\bigg[\sum_{i=1}^{n}\|p_k^i-\bar{y}_k\|^2\bigg]\nonumber\\
		&\leqslant C_2\eta^2_k
		=\!\frac{k_0^3(4n)^{k_0\!+\!1}(12L^2(D\!+\!2C_1)^2\!+\!12(G^2\!+\!\hat{\psi}))}{(k+2)^2}
	\end{align}
by using induction on $k$. We first prove that \eqref{lem2ex3} holds for all $1\leqslant k\leqslant k_0-2$. According to \eqref{algeua1} and \eqref{algeua2} of Algorithm \ref{alg:1}, we have $p_k^i=s^i_{k+1}+y_k^i-y^i_{k+1}$. Therefore,
\begin{align}\label{qew1}
	&\quad\mathbb{E}\bigg[\sum_{i=1}^{n}\|p_k^i-\bar{y}_k\|^2\bigg]\nonumber\\
%	=&\mathbb{E}[\sum_{i=1}^{n}\|s_{k+1}^i+y_k^i-y_{k+1}^i-\frac{1}{n}\sum_{j=1}^{n}{y}_k^j\|^2]\nonumber\\
	&\leqslant\mathbb{E}\bigg[\sum_{i=1}^{n}\bigg(4\|s_{k+1}^i\|^2+4\|y_k^i\|^2+4\|y_{k+1}^i\|^2+\frac{4}{n^2}\bigg\|\sum_{j=1}^{n}{y}_k^j\bigg\|^2\bigg)\bigg]\nonumber\\
	&\leqslant 4\sum_{i=1}^{n}\mathbb{E}[\|s_{k+1}^i\|^2]+8n\hat{\psi}+4n\hat{\psi},
\end{align}
where the first inequality holds for the fact that $\|\sum_{i=1}^nz_i\|^2\leqslant n\sum_{i=1}^n\|z_i\|^2$ ($z_i$ is an arbitrary vector for $\forall i\in\{1,2,\cdots,n\}$) and the last inequality is due to \eqref{lemma45eqw1}. The first term of RHS of \eqref{qew1} can be written as
\begin{align}\label{qrt1}
  \mathbb{E}[\|s_{k+1}^i\|^2]&=\mathbb{E}\bigg[\bigg\|\sum_{j=1}^{n}c_{ij}s_k^j+y_{k+1}^i-y_k^i\bigg\|^2\bigg]\nonumber\\
  &\leqslant3\mathbb{E}\bigg[\bigg\|\sum_{j=1}^{n}c_{ij}s_k^j\bigg\|^2+\|y_{k+1}^i\|^2+\|y_k^i\|^2\bigg]\nonumber\\
  &\overset{(a)}{\leqslant}3\bigg(\mathbb{E}\bigg[\bigg\|\sum_{j=1}^{n}c_{ij}s_k^j\bigg\|^2\bigg]+2\hat{\psi}\bigg)\nonumber\\
  &\leqslant3\bigg(n\mathbb{E}\bigg[\sum_{j=1}^{n}\|c_{ij}s_k^j\|^2\bigg]+2\hat{\psi}\bigg)\nonumber\\
  &\overset{(b)}{\leqslant}3n\sum_{j=1}^{n}c_{ij}\mathbb{E}[\|s_k^j\|^2]+6\hat{\psi}\nonumber\\
%  \leqslant&(3n)^kG^2+6\sum_{i=1}^{k}(3n)^{(k-i)}\hat{\psi}\nonumber\\
  &\overset{(c)}{\leqslant}(3n)^kG^2+6k(3n)^{(k-1)}\hat{\psi}
\end{align}
where $(a)$ follows from \eqref{lemma45eqw1};
%the first and  third inequality are due to the fact $\|\sum_{i=1}^nz_i\|^2\leqslant n\sum_{i=1}^n\|z_i\|^2$, $z_i(i\in\{1,2,\cdots,n\})$ is an arbitrary vector,
$(b)$ is due to the fact that $\|\sum_{i=1}^nz_i\|^2\leqslant n\sum_{i=1}^n\|z_i\|^2$ ($z_i$ is an arbitrary vector for $\forall i\in\{1,2,\cdots,n\}$) and the fact that $c_{ij}\leqslant 1$ for all $i,j\in\mathcal{N}$; $(c)$ is because $\mathbb{E}[\|s_1^j\|^2]=\mathbb{E}[\|\nabla f_j(\hat{x}_1^j,\xi_1^j)\|^2]\leqslant G^2$. Now substituting \eqref{qrt1} into \eqref{qew1}, we arrive at
\begin{align}
	&\quad\mathbb{E}\bigg[\sum_{i=1}^{n}\|p_k^i-\bar{y}_k\|^2\bigg]\nonumber\\
	&\leqslant4n(3n)^kG^2+24nk(3n)^{(k-1)}\hat{\psi}+8n\hat{\psi}+4n\hat{\psi}\nonumber\\
	&\leqslant(4n)^{k+1}G^2+6k(4n)^{k}\hat{\psi}+12n\hat{\psi},\nonumber
\end{align}
that is, $\mathbb{E}[\sum_{i=1}^{n}\|p_k^i-\bar{y}_k\|^2]\leqslant(4n)^{k_0-1}G^2+6(k_0-2)(4n)^{k_0-2}\hat{\psi}+12n\hat{\psi}$ holds for all $1\leqslant  k\leqslant k_0-2$.
Hence, the inequality \eqref{lem2ex3} is obviously true for $k=1$ to $k=k_0-2$.

 For induction step, we assume that \eqref{lem2ex3} holds for some $k\geqslant k_0-2$. Define the slack variable $\Delta y_{k+1}^i:=y_{k+1}^i-y_{k}^i$. Then, we observe that $s_{k+1}^i=\Delta y_{k+1}^i+p_{k}^i$ due to the definition of $s_{k+1}^i$ and $p_{k+1}^i$. From \eqref{fact1} and Lemma \ref{lemma0} (a), we have
\begin{align}\label{lem2ex4}
&\quad\mathbb{E}\bigg[\sum_{i=1}^{n}\|p_{k+1}^i-\bar{y}_{k+1}\|^2\bigg]\nonumber\\
	&\leqslant\mathbb{E}\bigg[|\lambda|^2\sum_{i=1}^{n}\|\Delta y_{k+1}^i+p_k^i-\bar{y}_{k+1}\|^2\bigg].
\end{align}
Define $\Delta Y_{k+1}:=\bar{y}_{k+1}-\bar{y}_k$. Then,
\begin{align}\label{213}
	&\quad\sum_{i=1}^{n}\|\Delta y_{k+1}^i+p_k^i-\bar{y}_{k+1}\|^2\nonumber\\
	&=\sum_{i=1}^{n}\|p_k^i-\bar{y}_k+\Delta y_{k+1}^i-\Delta Y_{k+1}\|^2\nonumber\\
	&\leqslant\sum_{i=1}^{n}\big[\|p_k^i-\bar{y}_k\|^2+\|\triangle y_{k+1}^i-\triangle Y_{k+1}\|^2\nonumber\\
	&\quad+2\|\triangle y_{k+1}^i-\triangle Y_{k+1}\|\|p_k^i-\bar{y}_k\|\big].
\end{align}
According to the definition of $\triangle y_{k+1}^i$, we get
%\begin{align}\label{etr37}
%	&\mathbb{E}[\|\Delta y_i(k+1)\|]\nonumber\\
%	=&\mathbb{E}[\|y_i(k+1)-y_i(k)\|]\nonumber\\
%	=&\mathbb{E}[\|\nabla f_i(\hat{x}_i(k+1),\xi_i(k+1))-\nabla f_i(\hat{x}_i(k),\xi_i(k+1))\nonumber\\
%	&+\gamma_{k+1}(\nabla f_i(\hat{x}_i(k),\xi_i(k+1))-y_i(k))\|]\nonumber\\
%	\leqslant&L\|\hat{x}_i(k+1)-\hat{x}_i(k)\|+\gamma_{k+1}(\mathbb{E}[\|\nabla f_i(\hat{x}_i(k),\xi_i(k+1))\|]\nonumber\\
%	&+\mathbb{E}[\|y_i(k)\|])\nonumber\\
%	%\leqslant&L\|\hat{x}_i(k+1)-\hat{x}_i(k)\|+\gamma_{k+1}\sigma\nonumber\\
%	\leqslant&L(D+2C_1)\eta_k+\gamma_{k+1}(G+\psi),
%\end{align}
%where the first inequality holds because of triangle inequality and  $L$-smooth property of function $f_i$, the last inequality is due to the Fact 3 and (\ref{lemma44eqw1}) of Lemma \ref{lemma4444}. Moreover, from the definition of $\triangle y_i(k+1)$, we have
\begin{align}\label{lem2ex5}
	&\quad\mathbb{E}[\|\Delta y_{k+1}^i\|^2]
	=\mathbb{E}[\|y_{k+1}^i-y_k^i\|^2]\nonumber\\
	&=\mathbb{E}[\|\nabla f_i(\hat{x}_{k+1}^i,\xi_{k+1}^i)-\nabla f_i(\hat{x}_k^i,\xi_{k+1}^i)\nonumber\\
	&\quad+\gamma_{k+1}(\nabla f_i(\hat{x}_k^i,\xi_{k+1}^i)-y_k^i)\|^2]\nonumber\\
	&\overset{(a)}{\leqslant}3\mathbb{E}[\|\nabla f_i(\hat{x}_{k+1}^i,\xi_{k+1}^i)-\nabla f_i(\hat{x}_k^i,\xi_{k+1}^i)\|^2]\nonumber\\
	&\quad+3\gamma_{k+1}^2\mathbb{E}[\|\nabla f_i(\hat{x}_k^i,\xi_{k+1}^i)\|^2]+3\gamma_{k+1}^2\mathbb{E}[\|y_k^i\|^2]\nonumber\\
	&\overset{(b)}{\leqslant}3L^2\mathbb{E}[\|\hat{x}_{k+1}^i-\hat{x}_k^i\|^2]+3\gamma_{k+1}^2(G^2+\hat{\psi})\nonumber\\
	&\overset{(c)}{\leqslant}3L^2(D+2C_1)^2\eta_k^2+3(G^2+\hat{\psi})\gamma_{k+1}^2,
\end{align}
where $(a)$ is due to the fact that $\|\sum_{i=1}^nz_i\|^2\leqslant n\sum_{i=1}^n\|z_i\|^2$ ($z_i$ is an arbitrary vector for $\forall i\in\{1,2,\cdots,n\}$); $(b)$ holds  because of the $L$-smooth property of function $f_i(x,\xi)$, Fact 3 and \eqref{lemma45eqw1}; $(c)$ follows from \eqref{lemma32equ}. It follows from the definition of $\Delta Y_{k+1}$ that
%\begin{align}\label{lem2ex91}
%	&\mathbb{E}[\|\Delta y_i(k+1)-\Delta Y(k+1)\|]\nonumber\\
%	=&\mathbb{E}[\|\Delta y_i(k+1)-\frac{1}{n}\sum_{i=1}^{n}\Delta y_i(k+1)\|]\nonumber\\
%	=&\mathbb{E}[\|(1-\frac{1}{n})\Delta y_i(k+1)-\frac{1}{n}\sum_{j\neq i}\Delta y_j(k+1)\|]\nonumber\\
%	\leqslant&(1-\frac{1}{n})\mathbb{E}[\|\Delta y_i(k+1)\|]+\frac{1}{n}\sum_{j\neq i}\mathbb{E}[\|\Delta y_j(k+1)\|]\nonumber\\
%	\leqslant&2(1-\frac{1}{n})(L(D+2C_1)\eta_k+\gamma_{k+1}(G+\psi)),
%\end{align}
%where the first inequality is due to the triangle inequality and the last inequality follows from (\ref{etr37}). Similarly,
\begin{align}\label{lem2ex9}
	&\quad\mathbb{E}[\|\Delta y_{k+1}^i-\Delta Y_{k+1}\|^2]\nonumber\\
	&=\mathbb{E}\bigg[\bigg\|\Delta y_{k+1}^i-\frac{1}{n}\sum_{i=1}^{n}\Delta y_{k+1}^i\bigg\|^2\bigg]\nonumber\\
	&=\mathbb{E}\bigg[\bigg\|(1-\frac{1}{n})\Delta y_{k+1}^i-\frac{1}{n}\sum_{j\neq i}\Delta y_{k+1}^j\bigg\|^2\bigg]\nonumber\\
	&\leqslant2(1-\frac{1}{n})\mathbb{E}[\|\Delta y_{k+1}^i\|^2]+\frac{2}{n}\sum_{j\neq i}\mathbb{E}[\|\Delta y_{k+1}^j\|^2]\nonumber\\
	&\overset{(a)}{\leqslant}4(1-\frac{1}{n})[3L^2(D+2C_1)^2\eta_k^2+3(G^2+\hat{\psi})\gamma_{k+1}^2]\nonumber\\
	&\overset{(b)}{\leqslant}[12L^2(D+2C_1)^2+12(G^2+\hat{\psi})]\eta_k^2,
\end{align}
where $(a)$ follows from \eqref{lem2ex5};
%the first inequality is due to the fact that $\|\sum_{i=1}^nz_i\|^2\leqslant n\sum_{i=1}^n\|z_i\|^2$ ($z_i$ is an arbitrary vector for $\forall i\in\{1,2,\cdots,n\}$), the second inequality follows from (\ref{lem2ex5}) and
$(b)$ is by the fact $1-\frac{1}{n}\leqslant1$.
Substituting \eqref{lem2ex9} into \eqref{213} and taking the full expectation of \eqref{213}, we obtain
\begin{align}\label{ert41}
	&\quad\mathbb{E}\bigg[\sum_{i=1}^{n}\|\triangle y_{k+1}^i+p_{k}^i-\bar{y}_{k+1}\|^2\bigg]\nonumber\\
%	\leqslant&\mathbb{E}\big[\sum_{i=1}^{n}(\|p_{k}^i-\bar{y}_{k}\|^2+\|\triangle y_{k+1}^i-\triangle Y_{k+1}\|^2\nonumber\\
%	&+2\|\triangle y_{k+1}^i-\triangle Y_{k+1}\|\|p_{k}^i-\bar{y}_k\|)\big]\nonumber\\
%	=&\mathbb{E}[\sum_{i=1}^{n}\|p_{k}^i-\bar{y}_k\|^2]+\sum_{i=1}^{n}\mathbb{E}[\|\triangle y_{k+1}^i-\triangle Y_{k+1}\|^2]\nonumber\\
%	&+2\sum_{i=1}^{n}\mathbb{E}[\|\triangle y_{k+1}^i-\triangle Y_{k+1}\|\|p_{k}^i-\bar{y}_k\|]\nonumber\\
	&\overset{(a)}{\leqslant}\mathbb{E}\bigg[\sum_{i=1}^{n}\|p_{k}^i-\bar{y}_k\|^2\bigg]+n[12L^2(D+2C_1)^2+12(G^2\nonumber\\
	&\quad+\hat{\psi})]\eta_k^2+2\sum_{i=1}^{n}(\mathbb{E}[\|\triangle y_{k+1}^i-\triangle Y_{k+1}\|^2])^\frac{1}{2}\nonumber\\
	&\quad\times(\mathbb{E}[\|p_{k}^i-\bar{y}_k\|^2])^\frac{1}{2}\nonumber\\
%	\leqslant&\mathbb{E}[\sum_{i=1}^{n}\|p_{k}^i-\bar{y}_k\|^2]+n[12L^2(D+2C_1)^2+12(G^2\nonumber\\
%	&+\hat{\psi})]\eta_k^2+2\eta_k\sqrt{12L^2(D+2C_1)^2+12(G^2+\hat{\psi})}\nonumber\\
%	&\times\sum_{i=1}^{n}\sqrt{\mathbb{E}\big[\sum_{i=1}^{n}\|p_{k}^i-\bar{y}_k\|^2\big]}\nonumber\\
	&\overset{(b)}{\leqslant}C_2\eta^2_k+n[12L^2(D+2C_1)^2+12(G^2+\hat{\psi})]\eta_k^2\nonumber\\
	&\quad+2n\eta^2_k\sqrt{[12L^2(D+2C_1)^2+12(G^2+\hat{\psi})]C_2}\nonumber\\
%	=&\eta^2_k\bigg(C_2+n(12L^2(D+2C_1)^2+12(G^2+\hat{\psi}))\nonumber\\
%	&+2n\sqrt{(12L^2(D+2C_1)^2+12(G^2+\hat{\psi}))C_2}\bigg)^2\nonumber\\
	&\overset{(c)}{\leqslant}\eta^2_k\bigg(C_2+n^2[12L^2(D+2C_1)^2+12(G^2+\hat{\psi})]\nonumber\\
	&\quad+2n\sqrt{[12L^2(D+2C_1)^2+12(G^2+\hat{\psi})]C_2}\bigg)\nonumber\\
	&\leqslant\eta^2_k\bigg(\sqrt{C_2}+\frac{\sqrt{C_2}}{k_0}\bigg)^2\nonumber\\
	&=\bigg(\frac{k_0+1}{k_0}\sqrt{C_2}\eta_k\bigg)^2,
\end{align}
where $(a)$ is due to the H$\ddot{o}$lder's inequality ($\mathbb{E}[|XY|]\leqslant(\mathbb{E}[|X|^2])^{\frac{1}{2}}(\mathbb{E}[|Y|^2])^{\frac{1}{2}}$); $(b)$ follows from \eqref{lem2ex9} and the induction hypothesis \eqref{lem2ex3}; $(c)$ is due to the fact $n\geqslant 1$.
Substituting \eqref{ert41}, $|\lambda|\leqslant\big(\frac{k_0}{k_0+1}\big)^2$ and $\eta_k=\frac{2}{k+2}$ into \eqref{lem2ex4}, we obtain
	\begin{align}
	\mathbb{E}\bigg[\sum_{i=1}^{n}\|p_{k+1}^i-\bar{y}_{k+1}\|^2\bigg]
%	\leqslant&\lambda^2\mathbb{E}[\sum_{i=1}^{n}\|\Delta y_i(k+1)+p_i(k)-\bar{y}(k+1)\|^2]\nonumber\\
		&\leqslant\bigg(\frac{2k_0}{(k+2)(k_0+1)}\sqrt{C_2}\bigg)^2\nonumber\\
%		=&\big(\frac{2k_0}{(k_0+1)(k+2)}\sqrt{C_2}\big)^2\nonumber\\
		&\leqslant\bigg(\frac{2(k+2)}{(k\!+\!3)(k\!+\!2)}\sqrt{C_2}\bigg)^2
		\!=C_2\eta^2_{k+1},\nonumber
	\end{align}
	where the last inequality is because of  the  monotonically increasing property of function $g(v)=v/(1+v)$ with respect to $v$ over $[0,\infty)$. Hence, \eqref{lem2ex3} holds for iteration $k+1$. Therefore, $\mathbb{E}[\|p_k^i-\bar{y}_k\|^2]\leqslant 4C_2/(k+2)^2$ holds for all $k\geqslant 1$.
\end{proof}

\subsection{Proof of Lemma \ref{lemmalemma34}}\label{proofD}
\begin{proof}
	(a) It follows from the definition of $\bar{y}_k$ that
	\begin{align}\label{lemma3ma1}
		\bar{P}_k-\bar{y}_k
	&	=\bar{P}_k-\frac{1}{n}\sum_{i=1}^{n} y_{k}^i\nonumber\\
	&	=\bar{P}_k-(1-\gamma_k)\bar{y}_{k-1}-\frac{1}{n}\sum_{i=1}^{n}\nabla f_i(\hat{x}_{k}^i,\xi_{k}^i)\nonumber
		\\
		&\quad +\frac{1-\gamma_k}{n}\sum_{i=1}^{n}\nabla f_i(\hat{x}_{k-1}^i,\xi_{k}^i).
	\end{align}
	Introducing $(1-\gamma_k)\bar{P}_{k-1}$ into \eqref{lemma3ma1} and taking norm square of \eqref{lemma3ma1}, we arrive at
\begin{align}\label{lemma3ma2}
	\|\bar{P}_k-\bar{y}_k\|^2
	&=\big\|(1\!-\!\gamma_k)(\bar{P}_{k-1}\!-\!\bar{y}_{k-1})\!-\!\frac{1}{n}\!\sum_{i=1}^{n}\!\nabla f_i(\hat{x}_{k}^i,\xi_{k}^i)\!+\!\bar{P}_{k}\nonumber\\
	&\quad+\!(1\!-\!\gamma_k)(\frac{1}{n}\!\sum_{i=1}^{n}\!\nabla f_i(\hat{x}_{k\!-\!1}^i,\!\xi_{k}^i)\!-\!\bar{P}_{k\!-\!1})\big\|^2\!.
\end{align}
Taking the conditional expectation of \eqref{lemma3ma2} on $\mathcal{F}_k$, we obtain
\begin{align}\label{lemma3ma3}
	&\quad\mathbb{E}_k[\|\bar{P}_k-\bar{y}_k\|^2]\nonumber\\
%	=&(1-\gamma_k)^2\|\bar{P}_{k-1}-\bar{y}_{k-1}\|^2+2(1-\gamma_k)(\bar{P}_{k-1}\nonumber\\
%	&-\bar{y}_{k-1})\mathbb{E}_k[\bar{P}_k-\frac{1}{n}\sum_{i=1}^{n}\nabla f_i(\hat{x}_{k}^i,\xi_{k}^i)+(1\nonumber\\
%	&-\gamma_k)(\frac{1}{n}\sum_{i=1}^{n}\nabla f_i(\hat{x}_{k-1}^i,\xi_{k}^i)-\bar{P}_{k-1})]\nonumber\\
%	&+\mathbb{E}_k[\|\bar{P}_k-\frac{1}{n}\sum_{i=1}^{n}\nabla f_i(\hat{x}_{k}^i,\xi_{k}^i)+(1-\gamma_k)\nonumber\\
%	&\times(\frac{1}{n}\sum_{i=1}^{n}\nabla f_i(\hat{x}_{k-1}^i,\xi_{k}^i)-\bar{P}_{k-1})\|^2]\nonumber\\
	&=(1\!-\!\gamma_k)^2\|\bar{P}_{k-1}\!-\!\bar{y}_{k-1}\|^2\!+\!2(1-\gamma_k)(\bar{P}_{k-1}\!-\!\bar{y}_{k-1})^{\mathrm{T}}\!\bigg(\!\mathbb{E}_k\bigg[\bar{P}_k\nonumber\\
	&\quad-\frac{1}{n}\sum_{i=1}^{n}\nabla f_i(\hat{x}_{k}^i,\xi_{k}^i)\bigg]+(1-\gamma_k)\mathbb{E}_k\bigg[\frac{1}{n}\sum_{i=1}^{n}\nabla f_i(\hat{x}_{k-1}^i,\xi_{k}^i)\nonumber\\
	&\quad-\bar{P}_{k-1}\bigg]\bigg)+\mathbb{E}_k\bigg[\bigg\|\bar{P}_k-\frac{1}{n}\sum_{i=1}^{n}\nabla f_i(\hat{x}^i_k,\xi_{k}^i)+(1\nonumber\\
	&\quad-\gamma_k)\bigg(\frac{1}{n}\sum_{i=1}^{n}\nabla f_i(\hat{x}_{k-1}^i,\xi_{k}^i)-\bar{P}_{k-1}\bigg)\bigg\|^2\bigg]\nonumber\\
	&=(1-\gamma_k)^2\|\bar{P}_{k-1}-\bar{y}_{k-1}\|^2+\mathbb{E}_k\bigg[\bigg\|\bar{P}_k-\frac{1}{n}\sum_{i=1}^{n}\nabla f_i(\hat{x}_{k}^i,\xi_{k}^i)\nonumber\\
	&\quad+(1-\gamma_k)\bigg(\frac{1}{n}\sum_{i=1}^{n}\nabla f_i(\hat{x}_{k-1}^i,\xi_{k}^i)-\bar{P}_{k-1}\bigg)\bigg\|^2\bigg],
\end{align}
where the last equality is because $\mathbb{E}_k[\bar{P}_k-\frac{1}{n}\sum_{i=1}^{n}\nabla f_i(\hat{x}_{k}^i,\xi_{k}^i)]=\mathbb{E}_k[\frac{1}{n}\sum_{i=1}^{n}\nabla F_i(\hat{x}_{k}^i)-\frac{1}{n}\sum_{i=1}^{n}\nabla f_i(\hat{x}_{k}^i,\xi_{k}^i)]=\frac{1}{n}\sum_{i=1}^{n}\mathbb{E}_k[\nabla F_i(\hat{x}_{k}^i)-\nabla f_i(\hat{x}_{k}^i,\xi_{k}^i)]=0$. Adding and subtracting $\gamma_k(\bar{P}_k-\frac{1}{n}\sum_{i=1}^n\nabla f_i(\hat{x}_{k}^i,\xi_{k}^i))$ in the second term of RHS of \eqref{lemma3ma3}, we have
\begin{align}\label{lemma3ma4}
%	&\mathbb{E}_k[\|\bar{P}_k-\frac{1}{n}\sum_{i=1}^{n}\nabla f_i(\hat{x}_{k}^i,\xi_{k}^i)+(1-\gamma_k)(-\bar{P}_{k-1}\nonumber\\
%	&+\frac{1}{n}\!\sum_{i=1}^{n}\nabla\! f_i(\hat{x}_{k-1}^i,\xi_{k}^i))\!+\!\gamma_k(\bar{P}_k\!-\!\frac{1}{n}\sum_{i=1}^n\!\nabla\! f_i(\hat{x}_{k}^i,\xi_{k}^i))\nonumber\\
%	&-\gamma_k(\bar{P}_k-\frac{1}{n}\sum_{i=1}^n\nabla f_i(\hat{x}_{k}^i,\xi_{k}^i))\|^2]\nonumber\\
	&\quad\mathbb{E}_k\bigg[\bigg\|(1-\gamma_k)\bigg(\frac{1}{n}\sum_{i=1}^n\nabla f_i(\hat{x}_{k-1}^i,\xi_{k}^i)-\bar{P}_{k-1}+\bar{P}_k\nonumber\\
	&\quad-\!\frac{1}{n}\!\sum_{i=1}^n\nabla\! f_i(\hat{x}_{k}^i,\xi_{k}^i)\bigg)\!+\!\gamma_k\bigg(\bar{P}_k\!-\!\frac{1}{n}\!\sum_{i=1}^n\!\nabla\! f_i(\hat{x}_{k}^i,\xi_{k}^i)\bigg)\bigg\|^2\bigg]\nonumber\\
	&\overset{(a)}{\leqslant}\frac{3(1\!-\!\gamma_k)^2}{n}\!\!\sum_{i=1}^n\!\mathbb{E}_k[\|\nabla f_i(\hat{x}_{k-1}^i,\!\xi_{k}^i)\!-\!\nabla f_i(\hat{x}_{k}^i,\!\xi_{k}^i)\|^2]\nonumber\\
	&\quad+3(1-\gamma_k)^2\bigg\|\frac{1}{n}\sum_{i=1}^{n}(\nabla F_i(\hat{x}^i_k)-\nabla F_i(\hat{x}^i_{k-1}))\bigg\|^2\nonumber\\
	&\quad+3\gamma_k^2\mathbb{E}_k\bigg[\bigg\|\frac{1}{n}\sum_{i=1}^{n}\nabla f_i(\hat{x}_{k}^i,\xi_{k}^i)-\bar{P}_k\bigg\|^2\bigg]\nonumber\\
	&\overset{(b)}{\leqslant}\frac{3(1-\gamma_k)^2}{n}\sum_{i=1}^n\mathbb{E}_k[L^2\|\hat{x}_{k}^i-\hat{x}_{k-1}^i\|^2]+3\gamma_k^2\delta^2\nonumber\\
	&\quad+\frac{3(1-\gamma_k)^2L^2}{n}\sum_{i=1}^{n}\|\hat{x}_{k}^i-\hat{x}_{k-1}^i\|^2\nonumber\\
	&\overset{(c)}{\leqslant}6L^2(D+2C_1)^2\eta_{k-1}^2+3\gamma_k^2\delta^2,
\end{align}
where $(a)$ is due to the fact that $\|\sum_{i=1}^nz_i\|^2\leqslant n\sum_{i=1}^n\|z_i\|^2$ ($z_i$ is an arbitrary vector for $\forall i\in\{1,2,\cdots,n\}$) and $\bar{P}_k=\frac{1}{n}\sum_{i=1}^{n}\nabla F_i(\hat{x}_k^i)$;
%the first inequality uses the fact that $\|\sum_{i=1}^nz_i\|^2\leqslant n\sum_{i=1}^n\|z_i\|^2$, $z_i(i\in\{1,2,\cdots,n\})$ is an arbitrary vector.
$(b)$ is due to Assumption \ref{assup5}, the $L$-smooth property of function $f_i$ and $F_i$; $(c)$ is obtained by  \eqref{lemma32equ} and the fact $(1-\gamma_k)^2\leqslant1$. Substituting \eqref{lemma3ma4} into \eqref{lemma3ma3}, we obtain
\begin{align}\label{lemma3ma5}
\mathbb{E}_k[\|\bar{P}_k-\bar{y}_k\|^2]&\leqslant	(1-\gamma_k)\|\bar{P}_{k-1}-\bar{y}_{k-1}\|^2\nonumber\\
&\quad+6L^2(D+2C_1)^2\eta_{k-1}^2+3\gamma_k^2\delta^2.
\end{align}

(b) It is obvious that $\mathbb{E}[\|\bar{P}_{1}-\bar{y}_1\|^2]=0\leqslant\frac{C_3}{k+2}$, that is, \eqref{lemma44eq1} holds when $k=1$. Now, let's discuss the case when $k\geqslant2$. Taking full expectation of \eqref{lemma3ma5} and choosing the step sizes $\gamma_k=\frac{2}{k+1}$, $\eta_k=\frac{2}{k+2}$, we have
\begin{align}
	&\quad\mathbb{E}[\|\bar{P}_k-\bar{y}_k\|^2]\nonumber\\
	&\leqslant(1-\frac{2}{k+1})\mathbb{E}[\|\bar{P}_{k-1}-\bar{y}_{k-1}\|^2]+\frac{24L^2(D+2C_1)^2+12\delta^2}{(k+1)^2}.\nonumber
\end{align}
By using Lemma \ref{lemmalemma1} ($r_1=1,~r_2=2,~A=2$ and $B=24L^2(D+2C_1)^2+12\delta^2$), we obtain $
	\mathbb{E}[\|\bar{P}_k-\bar{y}_k\|^2]\leqslant\frac{C_3}{k+2},$
where $C_3=24L^2(D+2C_1)^2+12\delta^2$.
\end{proof}

\subsection{Proof of Lemma \ref{lem1}}\label{proofF1}
\begin{proof}
	Adding and subtracting $\bar{P}_k$ and $\bar{y}_k$ to $\|\nabla F(\bar{x}_k)-p_k^i\|^2$, we have
	\begin{align}\label{qw1}
		&\quad\mathbb{E}[\|\nabla F(\bar{x}_k)-p_k^i\|^2]\nonumber\\
		&=\mathbb{E}[\|\nabla F(\bar{x}_k)-\bar{P}_k+\bar{P}_k-\bar{y}_k+\bar{y}(k)-p_k^i\|^2]\nonumber\\
		&\leqslant3\mathbb{E}[\|\nabla F(\bar{x}_k)-\bar{P}_k\|^2]+3\mathbb{E}[\|\bar{P}_k-\bar{y}_k\|^2]\nonumber\\
		&\quad+3\mathbb{E}[\|\bar{y}_k-p_k^i\|^2],
	\end{align}
	where the inequality is due to the fact $\|\sum_{i=1}^nz_i\|^2\leqslant n\sum_{i=1}^n\|z_i\|^2$, $z_i(i\in\{1,2,\cdots,n\})$ is an arbitrary vector. The first term of RHS of \eqref{qw1} can be written as
	\begin{align}\label{qw2}
		&\quad3\mathbb{E}[\|\nabla F(\bar{x}_k)-\bar{P}_k\|^2]\nonumber\\
		&=3\mathbb{E}\bigg[\bigg\|\frac{1}{n}\sum_{i=1}^n\nabla F_i(\bar{x}_k)-\frac{1}{n}\sum_{i=1}^n\nabla F_i(\hat{x}_{k}^i)\bigg\|^2\bigg]\nonumber\\
		&\leqslant3\mathbb{E}\bigg[\bigg(\frac{1}{n}\sum_{i=1}^n\|\nabla F_i(\bar{x}_{k}^i)-\nabla F_i(\hat{x}_{k}^i)\|\bigg)^2\bigg]\nonumber\\
		&\overset{(a)}{\leqslant}3\frac{L^2}{n}\sum_{i=1}^n\mathbb{E}[\|\bar{x}_k-\hat{x}_{k}^i\|^2]		\overset{(b)}{\leqslant} 3L^2C^2_1\eta_k^2,
	\end{align}
	where $(a)$ is due to the $L$-smooth property of function $F_i$; $(b)$ follows form \eqref{lem1equ}. Substituting \eqref{qw2}, \eqref{lemma44eq1} and \eqref{lemma45eq1} into \eqref{qw1}, we obtain
	\begin{align}
		\mathbb{E}[\|\nabla F(\bar{x}_k)-p_k^i\|^2]
		&\leqslant\frac{12L^2C^2_1}{(k+2)^2}+\frac{3C_3}{k+2}+ \frac{12C_2}{(k+2)^2}\nonumber\\
		&\leqslant\frac{12L^2C^2_1+3C_3+12C_2}{k+2}.\nonumber
	\end{align}
	\end{proof}

\subsection{Proof of Theorem \ref{the1}}\label{proofH}
\begin{proof}
	Since the function $F(x)$ is $L$-smooth, we have
	\begin{align}\label{equ1}
		F(\bar{x}_{k+1})&\leqslant F(\bar{x}_k)+\nabla^\mathrm{T} F(\bar{x}_k)(\bar{x}_{k+1}\!-\bar{x}_k)\!+\!\frac{L}{2}\|\bar{x}_{k+1}-\bar{x}_k\|^2\nonumber\\
		&=F(\bar{x}_k)+\eta_k\nabla^\mathrm{T} F(\bar{x}_k)(\bar{\theta}_k\!-\!\bar{x}_k)\!+\!\frac{L\eta^2_k}{2}\|\bar{\theta}_k\!-\!\bar{x}_k\|^2\nonumber\\
		&\leqslant F(\bar{x}_k)\!+\eta_k\nabla^\mathrm{T} F(\bar{x}_k)(\bar{\theta}_k-\bar{x}_k)+\frac{L\eta^2_k}{2}D^2,
	\end{align}
where the first equation is due to Lemma \ref{lemma0} (b) and the last inequality follows from Assumption \ref{assumption3}. From the definition of $\bar{\theta}_k$, the second term of RHS of \eqref{equ1} can be written as
\begin{align}\label{equ2}
	&\quad\nabla^\mathrm{T} F(\bar{x}_k)(\bar{\theta}_k-\bar{x}_k)
	=\frac{1}{n}\sum_{i=1}^{n}\nabla^\mathrm{T} F(\bar{x}_k)(\theta_k^i-\bar{x}_k)\nonumber\\
	&=\frac{1}{n}\sum_{i=1}^{n}(\nabla F(\bar{x}_k)-p_k^i+p_k^i)^\mathrm{T}(\theta_k^i-\bar{x}_k)\nonumber\\
	&\overset{(a)}{\leqslant}\frac{1}{n}\sum_{i=1}^{n}\big[(\nabla F(\bar{x}_k)-p_k^i)^\mathrm{T}(\theta_k^i-\bar{x}_k)+{p_k^i}^\mathrm{T}(x^*-\bar{x}_k)\big]\nonumber\\
	&=\frac{1}{n}\sum_{i=1}^{n}\big[(\nabla F(\bar{x}_k)-p_k^i)^\mathrm{T}(\theta_k^i-x^*+x^*-\bar{x}_k)\nonumber\\
	&\quad+{p_k^i}^\mathrm{T}(x^*-\bar{x}_k)\big]\nonumber\\
	&=\frac{1}{n}\!\sum_{i=1}^{n}\!\!\big[(\nabla F(\bar{x}_k)\!-\!p_k^i)^\mathrm{T}\!(\theta_k^i\!-\!x^*)\!+\!\nabla^\mathrm{T} F(\bar{x}_k)\!(x^*\!-\!\bar{x}_k)\big]\nonumber\\
	&\overset{(b)}{\leqslant}\frac{1}{n}\sum_{i=1}^{n}\|\nabla F(\bar{x}_k)-p_k^i\|\|\theta_k^i\!-\!x^*\|+F(x^*)\!-\!F(\bar{x}_k)\nonumber\\
	&\leqslant\frac{1}{n}\sum_{i=1}^{n}\|\nabla F(\bar{x}_k)-p_k^i\|D+F(x^*)-F(\bar{x}_k),
\end{align}
where $(a)$ follows from the optimality of $\theta_i$ in \eqref{algstep4}; $(b)$ is due to the property of convex function $F$, i.e., $F(x^*)-F(\bar{x}_k)\geqslant\nabla^\mathrm{T} F(\bar{x}_k)(x^*-\bar{x}_k)$. Substituting \eqref{equ2} into \eqref{equ1}, we arrive at
\begin{align}\label{equ3}
	F(\bar{x}_{k+1})&\leqslant F(\bar{x}_k)+\eta_k\bigg(\frac{1}{n}\sum_{i=1}^{n}\|\nabla F(\bar{x}_k)-p_k^i\|D\nonumber\\
	&\quad+F(x^*)-F(\bar{x}_k)\bigg)+\frac{L\eta^2_k}{2}D^2\nonumber\\
	&=(1\!-\!\eta_k)F(\bar{x}_k)\!+\!\frac{\eta_k}{n}\sum_{i=1}^{n}\|\nabla F(\bar{x}_k)\!-\!p_k^i\|D\nonumber\\
	&\quad+\eta_kF(x^*)+\frac{L\eta^2_k}{2}D^2.
\end{align}
Subtracting $F(x^*)$ from both sides of inequality \eqref{equ3}, we arrive at
\begin{align}\label{equ4}
	&\quad F(\bar{x}_{k+1})-F(x^*)\nonumber\\
	&\leqslant(1-\eta_k)(F(\bar{x}_k)-F(x^*))+\frac{L\eta^2_k}{2}D^2\nonumber\\
	&\quad+\frac{\eta_k}{n}\sum_{i=1}^{n}\|\nabla F(\bar{x}_k)-p_k^i\|D.
\end{align}
Taking the expectation and using the Jensen's inequality on the last term of \eqref{equ4}, that is, $\mathbb{E}[\|\nabla F(\bar{x}_k)-p_k^i\|]=\sqrt{(\mathbb{E}[\|\nabla F(\bar{x}_k)-p_k^i\|])^2}\leqslant\sqrt{\mathbb{E}[\|\nabla F(\bar{x}_k)-p_k^i\|^2]}$, we get
\begin{align}\label{equ5}
	&\quad\mathbb{E}[F(\bar{x}_{k+1})]-F(x^*)\nonumber\\
	&\leqslant(1-\eta_k)(\mathbb{E}[F(\bar{x}_k)]-F(x^*))+\frac{L\eta^2_k}{2}D^2\nonumber\\
	&\quad+\frac{\eta_k}{n}\sum_{i=1}^{n}\mathbb{E}[\|\nabla F(\bar{x}_k)-p_k^i\|]D\nonumber\\
	&\leqslant(1-\eta_k)(\mathbb{E}[F(\bar{x}_k)]-F(x^*))+\frac{L\eta^2_k}{2}D^2\nonumber\\
	&\quad+\frac{\eta_k}{n}\sum_{i=1}^{n}D\sqrt{\mathbb{E}[\|\nabla F(\bar{x}_k)-p_k^i\|^2]}.
\end{align}
Substituting $\gamma_k=\frac{2}{k+1}$, $\eta_k=\frac{2}{k+2}$ and \eqref{lemmaequal1} into \eqref{equ5}, we have
\begin{align}
	&\quad\mathbb{E}[F(\bar{x}_{k+1})]-F(x^*)\nonumber\\
	&\leqslant(1-\frac{2}{k+1})(\mathbb{E}[F(\bar{x}_k)]-F(x^*))+\frac{2LD^2}{(k+2)^2}\nonumber\\
	&\quad+\frac{2D\sqrt{12L^2C^2_1+3C_3+12C_2}}{(k+2)(k+2)^{\frac{1}{2}}}\nonumber\\
	&\leqslant(1-\frac{2}{k+2})(\mathbb{E}[F(\bar{x}_k)]-F(x^*))\nonumber\\
	&\quad+\frac{2LD^2+2D\sqrt{12L^2C^2_1+3C_3+12C_2}}{(k+2)^{\frac{3}{2}}}.
\end{align}
By using Lemma \ref{lemmalemma1} ($r_1=1,~r_2=\frac{3}{2},~A=2$ and $B=2LD^2+2D\sqrt{12L^2C^2_1+3C_3+12C_2}$), we obtain $
	\mathbb{E}[F(\bar{x}_{k+1})]-F(x^*)
	\leqslant\frac{C_4}{(k+3)^{\frac{1}{2}}}$, where $C_4=\mathrm{max}\{\sqrt{3}(F(\bar{x}_1)-F(x^*)),2LD^2+2D\sqrt{12L^2C^2_1+3C_3+12C_2}\}$.
\end{proof}

\subsection{Proof of Theorem \ref{the2}}\label{proofI}
\begin{proof}
	It follows from the definition of FW-gap (\ref{zx1}) that
	\begin{align}\label{mq1}
		g_k=\mathop{\mathrm{max}}\limits_{x\in\mathcal{X}}\langle\nabla F(\bar{x}_k),\bar{x}_k-x\rangle
		=\langle\nabla F(\bar{x}_k),\bar{x}_k-\hat{v}_k\rangle,
	\end{align}
	where
	\begin{align}\label{dfg1}
		\hat{v}(k)\in\mathop{\mathrm{argmin}}\limits_{v\in\mathcal{X}}\langle v,\nabla F(\bar{x}_k)\rangle.
	\end{align}
 According to the $L$-smooth property of $F$, we can write
	\begin{align}\label{the201}
		F(\bar{x}_{k+1})&\leqslant F(\bar{x}_k)+\nabla^\mathrm{T} F(\bar{x}_k)(\bar{x}_{k+1}\!-\!\bar{x}_k)\!+\!\frac{L}{2}\|\bar{x}_{k+1}-\bar{x}_k\|^2\nonumber\\
%		=&F(\bar{x}_k)+\eta_k\nabla F(\bar{x}_k)^T(\bar{\theta}_k-\bar{x}_k)+\frac{L\eta^2_k}{2}\|\bar{\theta}_k-\bar{x}_k\|^2\nonumber\\
		&\overset{(a)}{=}F(\bar{x}_k)\!+\!\frac{\eta_k}{n}\sum_{i=1}^{n}(\nabla F(\bar{x}_k)+p_k^i-p_k^i)^\mathrm{T}(\theta_k^i-\bar{x}_k)\nonumber\\
		&\quad+\frac{L\eta^2_k}{2}\|\bar{\theta}_k-\bar{x}_k\|^2\nonumber\\
		&\overset{(b)}{\leqslant}F(\bar{x}_k)\!+\!\frac{\eta_k}{n}\sum_{i=1}^{n}\!\big[(p_k^i\!-\!\nabla F(\bar{x}_k)\!+\!\nabla\! F(\bar{x}_k))^\mathrm{T}\!(\hat{v}_k\!-\!\bar{x}_k)\nonumber\\
		&\quad+(\nabla F(\bar{x}_k)-p_k^i)^\mathrm{T}(\theta_k^i-\bar{x}_k)\big]+\frac{L\eta^2_k}{2}\|\bar{\theta}_k-\bar{x}_k\|^2\nonumber\\
%		\leqslant&\frac{\eta_k}{n}\sum_{i=1}^{n}(p_k^i-\nabla F(\bar{x}_k)+\nabla F(\bar{x}_k))^T(\hat{v}_k-\bar{x}_k)\nonumber\\
%		&+\frac{\eta_k}{n}\sum_{i=1}^{n}\|\nabla F(\bar{x}_k)-p_k^i\|D+F(\bar{x}_k)+\frac{LD^2}{2}\eta^2_k\nonumber\\
		&\overset{(c)}{\leqslant}F(\bar{x}_k)+\frac{2\eta_k}{n}\sum_{i=1}^{n}\|\nabla F(\bar{x}_k)-p_k^i\|D\nonumber\\
		&\quad-\eta_kg_k+\frac{LD^2}{2}\eta^2_k,
	\end{align}
	where $(a)$ is because of Lemma \ref{lemma0} (b); $(b)$ is due to the fact $\theta_k^i=\mathrm{argmin}_{\phi\in\mathcal{X}}\langle p^i_k,\phi\rangle$ in \eqref{algstep4} and $\hat{v}(k)$ is defined in \eqref{dfg1}; $(c)$ follows from \eqref{mq1} and the Assumption \ref{assumption3}. Taking the expectation on both sides of \eqref{the201}, we arrive at
	\begin{align}\label{the202}
		\mathbb{E}[F(\bar{x}_{k+1})]
		&\leqslant\mathbb{E}[F(\bar{x}_k)]+\frac{2\eta_k}{n}\sum_{i=1}^{n}\mathbb{E}[\|\nabla F(\bar{x}_k)-p_k^i\|]D\nonumber\\
		&\quad-\eta_k\mathbb{E}[g_k]+\frac{LD^2}{2}\eta^2_k\nonumber\\
		&\overset{(a)}{\leqslant}\mathbb{E}[F(\bar{x}_k)]+\frac{2\eta_k}{n}\sum_{i=1}^{n}D\sqrt{\mathbb{E}[\|\nabla F(\bar{x}_k)-p_k^i\|^2]}\nonumber\\
		&\quad-\eta_k\mathbb{E}[g_k]+\frac{LD^2}{2}\eta^2_k\nonumber\\
		&\overset{(b)}{\leqslant}\mathbb{E}[F(\bar{x}_k)]+\frac{2\eta_kD\sqrt{12L^2C^2_1+3C_3+12C_2}}{(k+2)^{\frac{1}{2}}}\nonumber\\
		&\quad-\eta_k\mathbb{E}[g_k]+\frac{LD^2}{2}\eta^2_k,
	\end{align}
	where $(a)$ is due to the Jensen's inequality; $(b)$ follows from \eqref{lemmaequal1}. Summing from $k=1$ to $k=K$ on both sides of \eqref{the202}, we get
	\begin{align}\label{the203}
		\mathbb{E}\bigg[\sum_{k=1}^{K}\eta_kg_k\bigg]
		&\leqslant\mathbb{E}\bigg[\sum_{k=1}^{K}\big(F(\bar{x}_k)\!-\!F(\bar{x}_{k+1})\big)\bigg]\!+\!\sum_{k=1}^{K}\bigg(\frac{LD^2}{2}\eta^2_k\nonumber\\
		&\quad+\frac{2\eta_kD\sqrt{12L^2C^2_1+3C_3+12C_2}}{(k+2)^{\frac{1}{2}}}\bigg)\nonumber\\
		&=\mathbb{E}\big[F(\bar{x}_1)-F(\bar{x}_{K+1})\big]+\sum_{k=1}^{K}\bigg(\frac{LD^2}{2}\eta^2_k\nonumber\\
		&\quad+\frac{2\eta_kD\sqrt{12L^2C^2_1+3C_3+12C_2}}{(k+2)^{\frac{1}{2}}}\bigg)\nonumber\\
		&\leqslant\mathbb{E}\big[F(\bar{x}_1)-F(x^*)\big]+\sum_{k=1}^{K}\bigg(\frac{LD^2}{2}\eta^2_k\nonumber\\
		&\quad+\frac{2\eta_kD\sqrt{12L^2C^2_1+3C_3+12C_2}}{(k+2)^{\frac{1}{2}}}\bigg),
	\end{align}
	where the last inequality arises from the optimality of $x^*$. According to the property of $p$ series, we have $m-1\leqslant \sum_{k=1}^{2^m}\frac{2}{k+2},~\sum_{k=1}^{2^m}(\frac{2}{k+2})^2\leqslant8$, and there exists $\beta\in\mathbb{R}$ such that $\sum_{k=1}^{2^m}\frac{2}{(k+2)^{1.5}}\leqslant \beta.$	Define $g_a:=\mathop{\mathrm{min}}\limits_{k\in[1,K]}g_k$. Substituting $K=2^m$ and $\eta_k=\frac{2}{k+2}$  into \eqref{the203}, we arrive at
%	\leqslant 2\mathrm{ln}(2^m)+2
	\begin{align}
		&\quad\mathbb{E}[(m-1)g_a]\leqslant\mathbb{E}\bigg[\sum_{k=1}^{2^m}\frac{2g_a}{k+2}\bigg]\nonumber\\
		&\leqslant\mathbb{E}[F(\bar{x}_1)-F(x^*)]+\frac{LD^2}{2}\sum_{k=1}^{2^m}(\frac{2}{k+2})^2\nonumber\\
		&\quad+2D\sqrt{12L^2C^2_1+3C_3+12C_2}\sum_{k=1}^{2^m}\frac{2}{(k+2)^{1.5}}\nonumber\\
		&\leqslant4LD^2+2D\beta\sqrt{12L^2C^2_1+3C_3+12C_2}\nonumber\\
		&\quad+\mathbb{E}[F(\bar{x}_1)-F(x^*)].
	\end{align}
	Hence,
	\begin{align}
		\mathbb{E}[g_a]\leqslant&\frac{1}{m-1}\bigg(\mathbb{E}[F(\bar{x}_1)-F(x^*)]+4LD^2\nonumber\\
		&\quad+2D\beta\sqrt{12L^2C^2_1+3C_3+12C_2}\bigg)\nonumber\\
		&=\frac{1}{\mathrm{log}_2(K)-1}\bigg(\mathbb{E}[F(\bar{x}_1)-F(x^*)]+4LD^2\nonumber\\
		&\quad+2D\beta\sqrt{12L^2C^2_1+3C_3+12C_2}\bigg),
	\end{align}
	where the  equality is due to  $K=2^m$.
\end{proof}
\ifCLASSOPTIONcaptionsoff
  \newpage
\fi

\bibliographystyle{IEEEtran}
 \bibliography{Reference}

	\begin{IEEEbiography}[{\includegraphics[width=1in,height=1.25in,clip,keepaspectratio]{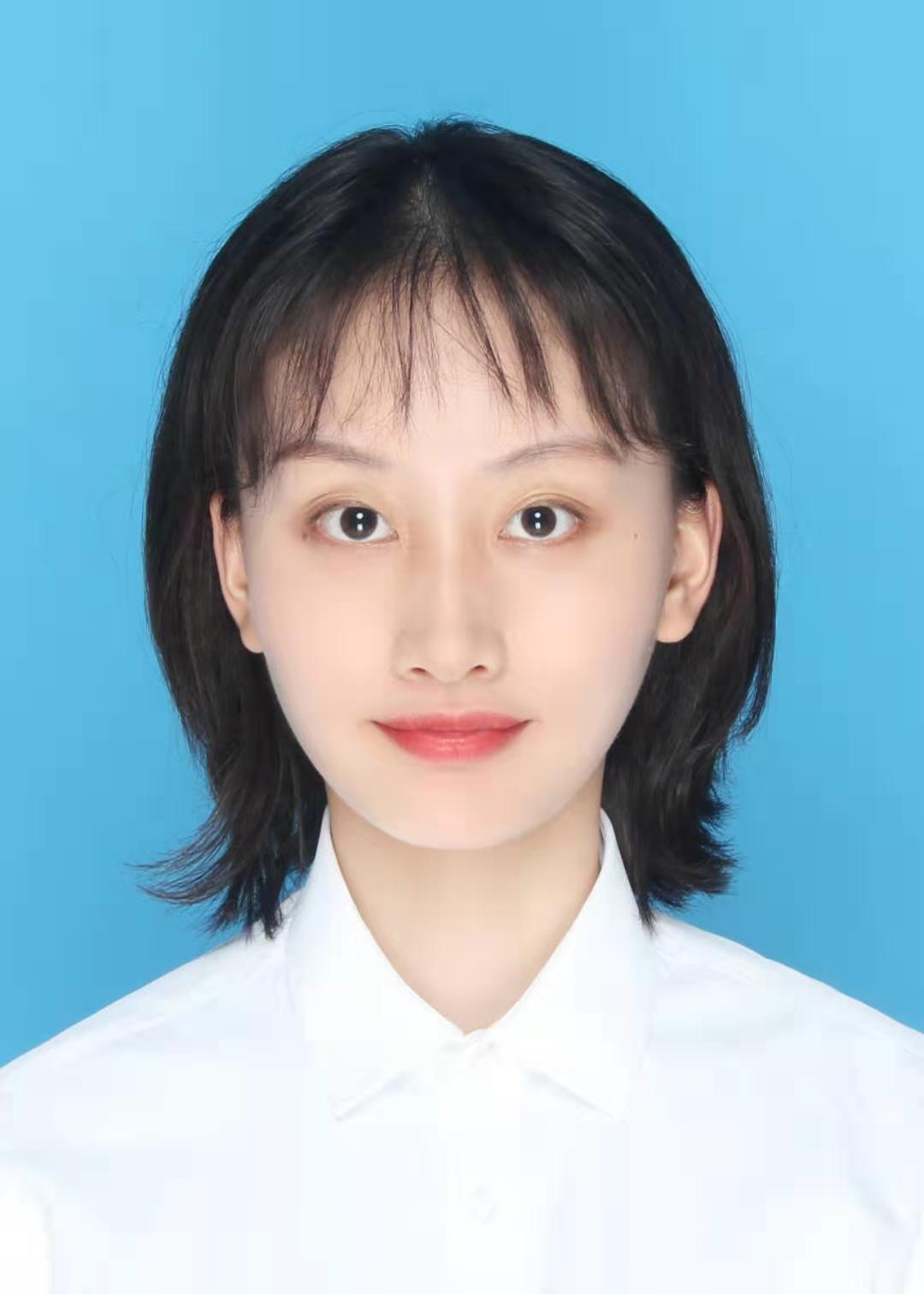}}]{Jie Hou} received the bachelor's degree in  computer science and technology from School of Computer and Information Engineering, Luoyang Institute of Science and Technology, China, in 2017, the master's degree in  computer science and technology from School of Computer Science and Software Engineering, Tiangong University, China, in 2020.
		She is currently pursuing the Ph.D. degree in control science and engineering with the School of Automation, Beijing Institute of Technology, Beijing, China.		
		Her current research interests include distributed optimization and stochastic optimization.
	\end{IEEEbiography}
	\begin{IEEEbiography}[{\includegraphics[width=1in,height=1.25in,clip,keepaspectratio]{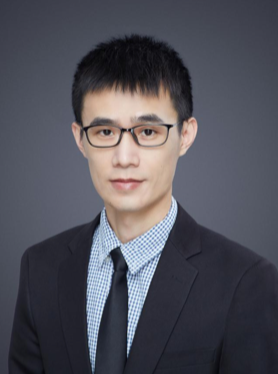}}]{Xianlin Zeng} (S'12-M'15) received the B.S. and M.S. degrees in Control Science and Engineering from the Harbin Institute of Technology, Harbin, China, in 2009 and 2011, respectively, and the Ph.D. degree in Mechanical Engineering from the Texas Tech University in 2015. He is currently an associate professor in the Key Laboratory of Intelligent Control and Decision of Complex Systems, School of Automation, Beijing Institute of Technology, Beijing, China. His current research interests include distributed optimization, distributed control, and distributed computation of network systems.	
	\end{IEEEbiography}
	\begin{IEEEbiography}[{\includegraphics[width=1in,height=1.5in,clip,keepaspectratio]{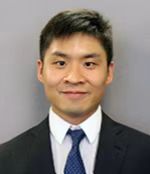}}] {Gang Wang} (M'18) received a B.Eng. degree in Automatic Control in 2011, and a Ph.D. degree in Control Science and Engineering in 2018, both from the Beijing Institute of Technology, Beijing, China. He also received a Ph.D. degree in Electrical and Computer Engineering from the University of Minnesota, Minneapolis, USA, in 2018, where he stayed as a postdoctoral researcher until July 2020. Since August 2020, he has been a professor with the School of Automation at the Beijing Institute of Technology.
		
		His research interests focus on the areas of signal processing, control, and reinforcement learning with applications to cyber-physical systems and multi-agent systems.
		He was the recipient of the Best Paper Award from the Frontiers of Information Technology \& Electronic Engineering (FITEE) in 2021, the Excellent Doctoral Dissertation Award from the Chinese Association of Automation in 2019, the Best Student Paper Award from the 2017 European Signal Processing Conference, and the Best Conference Paper at the 2019 IEEE Power \& Energy Society General Meeting. He is currently on the editorial board of Signal Processing, IEEE Open Journal of Control Systems, and IEEE Transactions on Signal and Information Processing over Networks. %IEEE/CAA Journal of Automatica Sinica, 	
	\end{IEEEbiography}
	\begin{IEEEbiography}[{\includegraphics[width=1in,height=1.25in,clip,keepaspectratio]{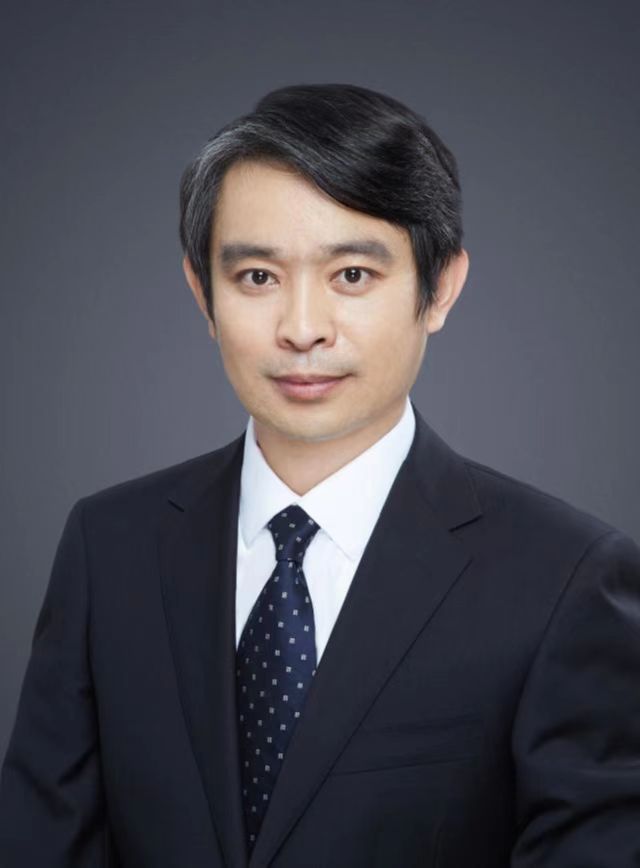}}]{Jian Sun} received his B.Sc. degree from the Department of Automation and Electric Engineering, Jilin Institute of Technology, Changchun, China, in 2001, the M.Sc. degree from the Changchun Institute of Optics, Fine Mechanics and Physics, Chinese Academy of Sciences (CAS), Changchun, China, in 2004, and the Ph.D. degree from the Institute of Automation, CAS, Beijing, China, in 2007.
		
		He was a Research Fellow with the Faculty of Advanced Technology, University of Glamorgan, Pontypridd, U.K., from 2008 to 2009. He was a Post-Doctoral Research Fellow with the Beijing Institute of Technology, Beijing, from 2007 to 2010. In 2010, he joined the School of Automation, Beijing Institute of Technology, where he has been a Professor since 2013. His current research interests include networked control systems, time-delay systems, and security of cyber-physical systems.
		
		Dr. Sun is an editorial board member of the IEEE Transactions on Systems, Man and Cybernetics: Systems, the Journal of Systems Science \& Complexity, and Acta. Automatica Sinica.
	\end{IEEEbiography}		

\begin{IEEEbiography}[{\includegraphics[width=1in,height=1.25in,clip,keepaspectratio]{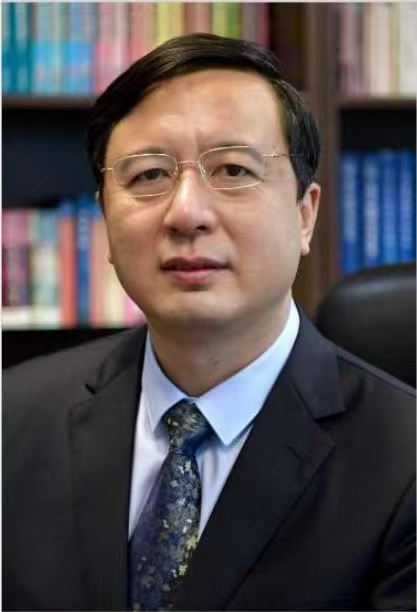}}]{Jie Chen} (M'09-SM'12-F'19) received his B.Sc., M.Sc., and the Ph.D. degrees in control theory and control engineering from the Beijing Institute of Technology, Beijing, China, in 1986, 1996, and 2001, respectively. From 1989 to 1990, he was a visiting scholar at the California State University, Long Beach, California, USA. From 1996 to 1997, he was a research fellow with the School of Engineering at the University of Birmingham, Birmingham, UK. 
			He is a Professor with the School of Automation, Beijing Institute of Technology, where he serves as the Director of the Key Laboratory of Intelligent Control and Decision of Complex Systems. He also serves as the President of Tongji University, Shanghai, China. 
			His research interests include multiagent systems, multiobjective optimization and decision, and constrained nonlinear control.
			
			Prof. Chen is currently the Editor-in-Chief of Unmanned Systems, Autonomous Intelligent Systems, and
		 Journal of Systems Science and Complexity. He has served on the editorial boards
			of several journals, including the
			IEEE Transactions on Cybernetics, International Journal of Robust
			and Nonlinear Control, and Science China Information Sciences. He is a Fellow of IEEE, IFAC, and a member of the Chinese Academy of Engineering.
			
			%		Prof. Chen served as a managing editor for the Journal of Systems Science \& Complexity, and an associate/subject editor for several other international journals, including the IEEE Transactions on Cybernetics, and the International Journal of Robust and Nonlinear Control. He was a recipient of several prestigious awards, including one second-grade  prize of the National Natural Science Award of China (2014), and two second-grade prizes of the National Science and Technology Progress Award of China (2009,	2011). He is an IEEE Fellow, IFAC Fellow, and a member of the Chinese Academy of Engineering.
			
			%		
			%		His main research interests include intelligent control and decision in complex systems, multi-agent systems, nonlinear control, and optimization. He has (co-)authored 4 books and more than 200 research papers. He served as a managing editor for the Journal of Systems Science \& Complexity, and an associate/subject editor for several other international journals, including the IEEE Transactions on Cybernetics, and the International Journal of Robust and Nonlinear Control. He was a recipient of several prestigious awards, including one second-grade prize of the National Natural Science Award of China (2014), and two second-grade prizes of the National Science and Technology Progress Award of China (2009,	2011). He is an IEEE Fellow, IFAC Fellow, and a member of the Chinese Academy of Engineering.

\end{IEEEbiography}
\vfill
\end{document}